\numberwithin{equation}{section}
\theoremstyle{plain}
\newtheorem{theorem}{Th\'eor\`eme}[section]
\newtheorem{corollary}[theorem]{Corollaire}
\newtheorem{lemma}[theorem]{Lemme}
\newtheorem{proposition}[theorem]{Proposition}
\theoremstyle{definition}
\newtheorem{defi}[theorem]{D\'efinition}
\newtheorem{remark}[theorem]{Remarque}
\newtheorem{question}[theorem]{Question} 
\DeclareMathOperator{\supp}{supp}
\DeclareMathOperator{\Irr}{Irr}
\DeclareMathOperator{\JH}{JH}
\DeclareMathOperator{\Hom}{Hom}
\DeclareMathOperator{\gr}{gr}
\DeclareMathOperator{\Jac}{Jac}
\DeclareMathOperator{\ind}{ind}
\DeclareMathOperator{\ry}{Nrd}
\DeclareMathOperator{\sy}{s.s.}
\author[Alberto M\'inguez]{Alberto M\'inguez}\thanks{Partially supported by MTM2004-07203-C02-01 and FEDER}\address{Alberto M\'inguez, Laboratoire de Math\'ematiques, Universit\'e Paris-Sud, B\^at 425
91405 Orsay Cedex, France, CNRS UMR 8628. \\ URL: {\rm  http://www.math.u-psud.fr/$\sim$minguez/}} 
\email{minguez@clipper.ens.fr}
\begin{document}
\title{Sur l'irr\'eductibilit\'e d'une induite parabolique}

\begin{abstract}
Let $F$ be a non-Archimedean locally compact field and let $D$ be a central division algebra over $F$. Let $\pi_1$ and $\pi_2$ be respectively two smooth irreducible representations of ${\rm GL}(n_1,D)$ and ${\rm GL}(n_2,F)$, $n_1, n_2 \geq 0$. In this article, we give some sufficient conditions on $\pi_1$ and $\pi_2$ so that the parabolically induced representation of $\pi_1 \otimes \pi_2$ to ${\rm GL}(n_1+n_2,D)$ has a unique irreducible quotient. In the case where $\pi_1$ is a cuspidal representation, we compute the Zelevinsky's parameters of such a quotient in terms of parameters of $\pi_2$. This is the key point for making explicit Howe correspondence for dual pairs of type II (\textit{cf.} \cite{Mi1}).
\end{abstract}
\maketitle
{\bf Codes MSN:} 22E50, 22E35.
\vspace{1cm}

\section*{Introduction}
Dans l'\'etude des repr\'esentations irr\'eductibles d'un groupe r\'eductif sur un corps local non archim\'edien $F$, on est amen\'e \`a consid\'erer aussi des repr\'esentations r\'eductibles. En g\'en\'eral, elles ne sont pas semi-simples et il est tr\`es int\'eressant de construire de telles repr\'esentations ayant un unique quotient (ou sous-repr\'esentation) irr\'eductible. C'est dans cette lign\'ee, par exemple, que l'on trouve les classifications de Langlands et de Zelevinsky du type: \textit{"l'unique quotient irr\'eductible de..."} ou \textit{"l'unique sous-repr\'esentation irr\'eductible de..."}.

Cet article est motiv\'e par la question suivante: \textit{ 
Soit $D$ une alg\`{e}bre \`{a} division de centre $F$ de dimension finie sur $F$. Si $\pi_1$ et $\pi_2$ sont deux repr\'esentations lisses irr\'eductibles de ${\rm GL}(n_1,D)$ et ${\rm GL}(n_2,F)$ respectivement, est-ce que l'induite parabolique, not\'ee $\pi_1 \times \pi_2$, de $\pi_1 \otimes \pi_2$ \`a ${\rm GL}(n_1+n_2,D)$ poss\`ede un unique quotient irr\'eductible?}

Nous utilisons des techniques des foncteurs de Jacquet, avec des consid\'erations de combinatoire, pour donner des conditions suffisantes pour que la r\'eponse soit positive. 

Donnons quelques premi\`eres applications de ce r\'esultat g\'en\'eral: la repr\'esentation $\pi_1 \times \pi_2$ admet, en particulier, un unique quotient irr\'eductible si:
\begin{enumerate}
\item La repr\'esentation $\pi_1$ est un caract\`ere de ${\rm GL}(n_1,D)$. C'est une question qui appara\^it naturellement dans la correspondance de Howe. Ceci prouve une ancienne conjecture de M.-F. Vign\'eras, \textit{cf.} \cite[Conjecture 3.III.6]{MVW}.
\item La repr\'esentation $\pi_1$ est une repr\'esentation cuspidale. Ceci permet, dans \cite{Mi1}, de rendre explicite la correspondance de Howe pour les paires duales de type II.
\item Les repr\'esentations $\pi_1$ et $\pi_2$ sont essentiellement de carr\'e int\'egrable. Ceci permet de donner (\textit{cf.} \cite{Mi2}) une preuve simple, compl\`etement combinatoire, de la classification de Zelevinsky \cite{Z1}, des repr\'esentations irr\'eductibles de ${\rm GL}(n,D)$, en termes des segments.
\end{enumerate}
Remarquons, pour comprendre l'importance de la question qui motive cet article, que parmi d'autres applications, l'unicit\'e du quotient irr\'eductible impliquerait, en particulier, la conjecture (U0) de Tadic (qui vient d'\^etre prouv\'ee dans \cite{Se4}) qui permet de d\'eterminer le dual unitaire de ${\rm GL}(n,D)$.

Donnons maintenant plus de d\'etails concernant les diff\'erentes sections de l'article:

Dans la section \ref{1}, on introduit les notations. Dans la	 section \ref{lemageo}, on utilise le lemme g\'eom\'etrique combinatoire de \cite{Z1}, pour donner une condition suffisante pour que l'induite parabolique du produit tensoriel de plusieurs repr\'esentations irr\'eductibles n'ait qu'un seul quotient irr\'eductible. En section \ref{bloques}, on introduit les foncteurs de Jacquet et, en section \ref{sectadic}, on rappelle la classification de Tadic des repr\'esentations irr\'eductibles de ${\rm GL}(n,D)$.

Dans la section \ref{irr}, on combine ces r\'esultats pour donner le th\'eor\`eme principal de cet article (Th\'eor\`eme \ref{l}). Dans la section \ref{3}, on calcule, avec la classification de la section \ref{sectadic}, les param\`etres de l'unique quotient irr\'eductible de $\pi_1 \times \pi_2$, quand la repr\'esentation $\pi_1$ est cuspidale, en termes des param\`etres de $\pi_2$.

On utilise ce calcul, dans la section \ref{aap}, pour tirer quelques cons\'equences: on donne une condition combinatoire n\'ecessaire et suffisante d'irr\'eductibilit\'e de $\pi_1 \times \pi_2$, quand $\pi_1$ est cuspidale (Th\'eor\`eme \ref{irredu}). De plus, il nous permet de montrer (Th\'eor\`eme \ref{geometrique}) une conjecture g\'eom\'etrique de Tadic \cite[Conjecture 3.6]{Tadic} sur l'involution de Zelevinsky ainsi que la g\'en\'eralisation de la description combinatoire de cette involution au cas des formes int\'erieures de ${\rm GL}(n,F)$, ce qui facilite les calculs de \cite{Tad2}.

Dans l'appendice, on montre (Corollaire \ref{comb}) un lemme jouant un r\^ole cl\'e dans le calcul de \cite{Mi1} de la correspondance th\^eta explicite dans le cas des paires duales de type II. Il serait tr\`es int\'eressant de trouver une preuve plus simple, sans devoir utiliser tous les calculs des sections pr\'ec\'edentes, pour, peut-\^etre, g\'en\'eraliser les r\'esultats de \cite{Mi1} aux paires duales de type I. Remarquons que dans le cas des repr\'e\-sen\-ta\-tions des groupes orthogonaux, la repr\'esentation induite parabolique \`a partir du produit tensoriel de deux repr\'esentations cuspidales peut avoir deux sous-modules irr\'eductibles (\textit{cf.} \cite{Moe}).

Je voudrais particuli\`erement remercier Guy Henniart et Colette M\oe glin pour leurs nombreux conseils et id\'ees. Je remercie  aussi Florent Benaych-Georges, Goran Muic, Hiroshi Saito et Vincent S\'echerre pour les remarques et corrections int\'eressantes qu'ils m'ont faites \`a propos de cet article.




\section{Pr\'eliminaires}\label{1}
Soient $F$ un corps commutatif localement compact non archim\'{e}dien de caract\'{e}ristique
r\'{e}siduelle $p>0$, $D$ une alg\`{e}bre \`{a} division de centre $F$ de dimension finie $d^2$ sur $F$.

On note $\mathcal{M}_{n}$ l'ensemble de matrices $n \times n$ \`a coefficients dans $D$ et $\ry :\mathcal{M}_n\rightarrow F$ la norme r\'{e}duite. Le groupe ${\rm GL}_n(D)$ des matrices inversibles dans $\mathcal{M}_{n}$ sera not\'e $G_n$. Le groupe trivial sera not\'e $G_0$. On note $\nu=|\ry|_F$, la valeur absolue de la norme r\'eduite (par abus de notation on ne fera pas distinction entre $\nu$ agissant sur $G_n$ pour diff\'erent $n$).

A toute partition (ordonn\'ee) $\alpha=\left( n_1,\dots,n_r\right)$, $n_i \geq 0$, de l'entier $n$, correspond une d\'ecomposition en blocs des matrices carr\'ees d'ordre $n$. On notera $G_{\alpha}$ le sous-groupe de $G_n$ form\'e des matrices inversibles diagonales par blocs, $P_{\alpha}$ (resp. $\overline{P}_{\alpha}$) le sous-groupe form\'e des matrices triangulaires sup\'erieures (resp. inf\'erieures) par blocs, et $U_{\alpha}$ le sous-groupe de $P_{\alpha}$ form\'e des \'el\'ements dont les blocs diagonaux sont des matrices unit\'e. Le sous-groupe $\overline{P}_{\alpha}$ est conjugu\'e \`a $P_{\overline{\alpha}}$ dans $G_n$ avec $\overline{\alpha}=\left( n_r,\dots,n_1\right)$. 

Dans cet article on ne consid\'erera que des repr\'esentations lisses complexes et le mot \textit{repr\'esentation} voudra toujours dire \textit{repr\'esentation lisse complexe}. On notera $\Irr (G_n)$ l'ensemble des classes d'\'equivalence des repr\'esentations irr\'eductibles de $G_n$. On note $\Irr$ l'union disjointe
\begin{equation*}
\Irr=\bigcup_{n\geq0}\Irr (G_n),
\end{equation*}
et $\mathcal{C}$ le sous-ensemble de $\Irr$ form\'e de repr\'esentations cuspidales. Le support cuspidal de $\pi \in \Irr$ sera not\'e $\supp (\pi)$.

On note $r_{(n_1,\dots,n_r),n}$ (resp. $\overline{r}_{(n_1,\dots,n_r),n}$) le foncteur de Jacquet normalis\'e associ\'e au parabolique standard $P_{\alpha}$ (resp. $\overline{P}_{\alpha}$). 

Soient $\rho_i \in \Irr(G_{n_i})$, $1\leq i \leq r$. On note $\rho_1\times \dots \times \rho_r$ la repr\'esentation $\ind^{G_n}_{P_{\alpha}}\left(\rho_1, \otimes \dots \otimes \rho_r \otimes 1_{U_{\alpha}}\right) $ induite parabolique normalis\'ee.

Soit $\pi$ une repr\'esentation de $G_n$; on a un isomorphisme canonique (r\'eciprocit\'e de Frobenius): \begin{equation}\label{frob}
\Hom_{G_n} \left( \pi, \rho_1\times \dots \times \rho_r \right) \simeq \Hom_{G_\alpha} \left( r_{(n_1,\dots,n_r),n}(\pi), \rho_1 \otimes \dots \otimes \rho_r \right).
\end{equation}

On dispose aussi d'un isomorphisme de r\'eciprocit\'e \textit{\`a la Casselman} (\textit{cf.} \cite[Theorem 20]{ber}):
\begin{equation}\label{frobcas}
\Hom_{G_n} \left( \rho_1\times \dots \times \rho_r , \pi\right) \simeq \Hom_{G_\alpha} \left( \rho_1 \otimes \dots \otimes \rho_r ,\overline{r}_{(n_1,\dots,n_r),n}(\pi) \right).
\end{equation}

On notera $\mathcal{R}_n$ le groupe de Grothendieck de la cat\'egorie des $G_n$-modules de longueur finie identifi\'e au $\mathbb{Z}$-module libre qui a pour base $\left( \pi \right)_{\pi \in \Irr(G_n)}$. Le sous-semigroupe de $\mathcal{R}_n$ qui consiste en des sommes finies $\pi_1 + \dots + \pi_k$ o\`u $\pi_i \in \Irr(G_n)$, $k \geq 0$ sera not\'e $\mathcal{R}^+_n$. Posons 
\begin{eqnarray*}
\mathcal{R}&=&\bigoplus_{n \geq 0} \mathcal{R}_n ,\\
\mathcal{R}^+&=&\bigoplus_{n \geq 0} \mathcal{R}^+_n.
\end{eqnarray*}
Si $\pi_1, \pi_2 \in \mathcal{R}$ on note $\pi_1 \leq \pi_2$ si $\pi_2 - \pi_1 \in \mathcal{R}^+$. 

On note $\sy (\pi)$ (ou $\JH(\pi)$) l'image de $\pi$ dans $\mathcal{R}$ pour toute repr\'esentation de longueur finie $\pi$.

Si $\pi$ est une repr\'esentation de $G_n$ on notera $\gr (\pi) =n$ et $ \widetilde{\pi}$ la contragr\'ediente de $\pi$.

Si $\pi$ est une repr\'esentation de longueur finie, alors $$\sy (\pi) = \bigoplus_{\tau_i \in \Irr, 1\leq i \leq r} m_i \tau_i, \qquad \text{o\`u les $\tau_i$ sont distincts.}$$ On dit que les $\tau_i$ forment une suite de composition de $\pi$ et que  $m_i$ est la multiplicit\'e de $\tau_i$ dans $\pi$.
\section{Lemme g\'eom\'etrique combinatoire}\label{lemageo}
On rappelle ici les r\'esultats de \cite[1.6.]{Z1} et on d\'eduit quelques premiers lemmes simples qui seront utilis\'es dans la suite.

Soient $\beta, \gamma$ deux partitions de $n$, $\beta = \left( n_1,\dots,n_r\right)$, $\gamma =\left( m_1,\dots,m_s\right)$ et pour $i \in \{1, \dots r\}$ soit $\rho_i$ une repr\'esentation de $G_{n_i}$. On veut calculer une suite de composition de $r_{\left( m_1,\dots,m_s\right),n} \left( \rho_1\times \dots \times \rho_r \right)$. 

Notons $M^{\beta, \gamma}$ l'ensemble des matrices $b=(b_{i,j})$ telles que
\begin{enumerate}
\item Les $b_{i,j}$ sont des entiers non n\'egatifs,
\item\label{2} $\sum_jb_{i,j}=n_i$ pour tout $i=1,\dots,r$; $\sum_ib_{i,j}=m_j$ pour tout $i=1,\dots,s$.
\end{enumerate}

Fixons $b \in M^{\beta, \gamma}$ et notons $\beta_i$ la partition $\left( b_{i1},\dots,b_{is}\right)$ de $n_i$ et $ \gamma_j$ la partition $\left( b_{1j},\dots,b_{rj}\right)$ de $m_j$. 

Les $r_{\beta_i,n_i}(\rho_i)$ sont de longueur finie. Supposons alors $\JH\left(r_{\beta_i,n_i}(\rho_i)\right) = \left\{\sigma^{(1)}_{i}, \sigma^{(2)}_{i}, \dots \sigma^{(r_i)}_{i} \right\}$ o\`u $$\sigma^{(k)}_{i}=\sigma^{(k)}_{i1}\otimes \dots \otimes \sigma^{(k)}_{is}, \text{ }\sigma^{(k)}_{ij}\in \Irr\left( G_{b_{ij}}\right), \qquad  k=1,\dots,r_i.$$
Pour tous $k_1, \dots, k_r$ on pose
$$\sigma_{j}=\sigma^{(k_1)}_{1j}\times \sigma^{(k_2)}_{2j} \times \dots \times \sigma^{(k_r)}_{rj}, \qquad 1 \leq k_j \leq r_j,$$
repr\'esentation de $G_{m_j}$, et 
$$\sigma(k_1, \dots, k_r)=\sigma_{1}\otimes \sigma_{2}\otimes \dots \otimes \sigma_{s},$$
repr\'esentation de $G_{\gamma}$.

Alors, d'apr\`es \cite[1.6.]{Z1}, les $\sigma(k_1, \dots, k_r)$ quand on fait varier les $(k_1, \dots, k_r)$ et $b \in M^{\beta, \gamma}$ forment une suite de composition de la repr\'esentation $r_{\left( m_1,\dots,m_s\right),n} \big( \rho_{1} \times \dots \times \rho_r \big)$.

Une premi\`ere cons\'equence imm\'ediate est la proposition ci-dessous:
\begin{proposition}\label{cons} 
Avec les notations pr\'ec\'edentes, supposons que pour tout $i$, $1 \leq i \leq r$, toute partition $\beta_i=\left( b_{i1}, b_{i2}\right)$ de $n_i$ et tout $\sigma^{(k)}_{i}=\sigma^{(k)}_{i1}\otimes \sigma^{(k)}_{i2} \in \JH\left(r_{\beta_i,n_i}(\rho_i)\right) $ on ait
\begin{enumerate}
\item Ou bien $\supp\left(\sigma^{(k)}_{i1} \right) \nsubseteq \underset{1 \leq j \leq i-1}{\bigcup} \supp \left( \rho_j \right)$;
\item Ou bien $\supp\left(\sigma^{(k)}_{i2} \right) \nsubseteq \underset{i+1 \leq j \leq r}{\bigcup} \supp \left( \rho_j \right)$.
\end{enumerate}
Alors la repr\'esentation $\rho_1\otimes \dots \otimes \rho_r $ appara\^it avec multiplicit\'e $1$ dans $\JH \left( r_{\beta, n} \left(\rho_1\times \dots \times \rho_r \right) \right)$.
\end{proposition}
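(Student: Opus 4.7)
\bigskip

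\noindent\textbf{Plan de preuve.}
L'id\'ee est de montrer la borne inf\'erieure (multiplicit\'e $\geq 1$) par la matrice diagonale de $M^{\beta,\beta}$, puis la borne sup\'erieure (multiplicit\'e $\leq 1$) par un raisonnement par l'absurde utilisant l'hypoth\`ese pour chaque autre $b$ non diagonal.

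Premi\`erement, on remarque que la matrice $b^{0}=(n_i\delta_{i,j})_{i,j}$ appartient \`a $M^{\beta,\beta}$ et correspond aux partitions triviales $\beta_i^{0}=(0,\dots,n_i,\dots,0)$, ce qui donne $r_{\beta_i^{0},n_i}(\rho_i)=\rho_i$ (un unique facteur $\sigma_{i}^{(1)}=1\otimes\cdots\otimes\rho_i\otimes\cdots\otimes 1$). Le recette de la section pr\'ec\'edente fournit alors $\sigma(1,\dots,1)=\rho_1\otimes\cdots\otimes\rho_r$. Ceci \'etablit la multiplicit\'e au moins $1$.

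Deuxi\`emement, pour montrer qu'aucune autre matrice ne contribue, je suppose par l'absurde qu'il existe $b\in M^{\beta,\beta}$, $b\neq b^{0}$, et $(k_1,\dots,k_r)$ tels que $\sigma(k_1,\dots,k_r)=\rho_1\otimes\cdots\otimes\rho_r$. L'\'egalit\'e en colonnes $\sigma_j=\rho_j$ combin\'ee au fait que chaque $\sigma_{i}^{(k_i)}$ est un sous-quotient de $r_{\beta_i,n_i}(\rho_i)$ fournit les deux contraintes fondamentales $\supp(\sigma^{(k_i)}_{i,j})\subseteq\supp(\rho_j)$ et $\supp(\sigma^{(k_i)}_{i,j})\subseteq\supp(\rho_i)$ pour tous $i,j$.

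Le coeur de l'argument consiste alors \`a choisir le plus petit indice $i_0$ pour lequel la $i_0$-i\`eme ligne de $b$ diff\`ere de celle de $b^{0}$. Une saturation successive des sommes en colonnes force $b_{i_0,j}=0$ pour $j<i_0$, donc $b_{i_0,i_0}<n_{i_0}$ et il existe $j>i_0$ avec $b_{i_0,j}>0$. J'applique alors l'hypoth\`ese \`a $i=i_0$ avec la $2$-partition $(b_{i_0,i_0},n_{i_0}-b_{i_0,i_0})$ de $n_{i_0}$; par transitivit\'e des foncteurs de Jacquet, le regroupement des colonnes $\leq i_0$ contre $>i_0$ produit dans $r_{(b_{i_0,i_0},n_{i_0}-b_{i_0,i_0}),n_{i_0}}(\rho_{i_0})$ un facteur de Jordan-H\"older $\pi_1\otimes\pi_2$ avec $\pi_1=\sigma^{(k_{i_0})}_{i_0,i_0}$ et $\supp(\pi_2)=\bigcup_{j>i_0}\supp(\sigma^{(k_{i_0})}_{i_0,j})$. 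Les contraintes en colonnes donnent imm\'ediatement $\supp(\pi_2)\subseteq\bigcup_{j>i_0}\supp(\rho_j)$, ce qui r\'efute la condition~(2) de l'hypoth\`ese.

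L'hypoth\`ese impose donc la condition~(1): $\supp(\sigma^{(k_{i_0})}_{i_0,i_0})\nsubseteq\bigcup_{j<i_0}\supp(\rho_j)$. L'obstacle principal, et l\`a r\'eside la partie technique, sera d'extraire de cette non-inclusion une contradiction r\'eellement combinatoire. Je pense utiliser un argument dual sym\'etrique en prenant le plus grand indice $i_1$ o\`u la ligne de $b$ diff\`ere de $b^{0}$ (ce qui oblige $b_{i_1,j}=0$ pour $j>i_1$ et fournit, via la $2$-partition $(n_{i_1}-b_{i_1,i_1},b_{i_1,i_1})$, la condition duale $\supp(\sigma^{(k_{i_1})}_{i_1,i_1})\nsubseteq\bigcup_{j>i_1}\supp(\rho_j)$), puis combiner ces deux non-inclusions avec les \'egalit\'es multi-ensemblistes $\supp(\rho_i)=\bigsqcup_j\supp(\sigma^{(k_i)}_{i,j})$ et $\supp(\rho_j)=\bigsqcup_i\supp(\sigma^{(k_i)}_{i,j})$ pour obtenir l'incompatibilit\'e cherch\'ee. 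Cette \'etape de comptabilit\'e sur les supports cuspidaux est attendue comme la partie la plus d\'elicate.
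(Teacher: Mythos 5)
Your first half is sound and coincides with the paper's: the diagonal matrix of $M^{\beta,\beta}$ is the only one giving the factor $\rho_1\otimes\cdots\otimes\rho_r$ "for free", the row/column support constraints $\supp(\sigma^{(k_i)}_{ij})\subseteq\supp(\rho_j)$ and $\supp(\sigma^{(k_i)}_{ij})\subseteq\supp(\rho_i)$ are exactly what one must exploit, and your observations that $b_{i_0,j}=0$ for $j<i_0$ and that condition (2) fails for the split $(b_{i_0,i_0},n_{i_0}-b_{i_0,i_0})$ are correct. The genuine gap is that the contradiction itself is never derived: you only announce ("je pense utiliser\dots", "est attendue comme la partie la plus d\'elicate") a dual statement at the maximal differing row plus a support count. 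Moreover that plan cannot close as sketched: the two surviving non-inclusions only say that part of $\supp(\rho_{i_0})$ escapes $\bigcup_{j<i_0}\supp(\rho_j)$ and part of $\supp(\rho_{i_1})$ escapes $\bigcup_{j>i_1}\supp(\rho_j)$, and these are compatible with all the row/column identities. Concretely, take $r=2$, $D=F$, $\rho_1=\langle\{1,\nu\}\rangle$ (module de Jacquet $1\otimes\nu$), $\rho_2=\langle\{\nu,\nu^2\}\rangle$ (module de Jacquet $\nu\otimes\nu^2$), and $b$ the matrix with all entries $1$: every fact you have extracted (and the announced dual one) holds, yet $\rho_1\otimes\rho_2$ does occur in the corresponding piece $(1\times\nu)\otimes(\nu\times\nu^2)$. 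So the deferred "comptabilit\'e" is not routine bookkeeping; no contradiction can be squeezed out of the ingredients you have assembled.

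The paper closes the argument with a different pivot: it takes $i_0$ minimal such that $b_{i_0 j}\neq 0$ for some $j<i_0$, and $j_0$ the least such $j$ (dually $i'_0$ maximal with a nonzero entry strictly right of the diagonal, and $j'_0$ maximal). Since the entries of row $i_0$ left of $j_0$ vanish, the two-block partition $(b_{i_0 j_0},n_{i_0}-b_{i_0 j_0})$ isolates the single factor $\sigma^{(k_{i_0})}_{i_0 j_0}$, and because $j_0<i_0$ the disjunct (1) invoked there reads $\supp(\sigma^{(k_{i_0})}_{i_0 j_0})\nsubseteq\bigcup_{j<i_0}\supp(\rho_j)$, which collides head-on with the column constraint $\supp(\sigma^{(k_{i_0})}_{i_0 j_0})\subseteq\supp(\rho_{j_0})$; dually, disjunct (2) at $(i'_0,j'_0)$ kills column $j'_0$. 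In other words the contradiction must be sought at an entry strictly off the diagonal, where the hypothesis denies an inclusion that the column condition forces; splitting at the diagonal entry, as you do, leaves you with the disjunct that constrains $\rho_{i_0}$ itself and yields nothing. If you rewrite the proof along these lines, make the case analysis on the two disjuncts explicit: the example above shows that the per-datum "ou bien (1) ou bien (2)" alone does not exclude the mixed case, and in the intended application (Th\'eor\`eme \ref{l}) it is condition (2), verified for every split of the first factor, that actually supplies the disjunct needed at the pivot $(i'_0,j'_0)$.
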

\begin{proof}
D'apr\`es ce qui pr\'ec\`ede,  les $\sigma(k_1, \dots, k_r)$, quand on fait varier $(k_1, \dots, k_r)$ et $b \in M^{\beta, \beta}$, forment une suite de composition de $r_{\beta, n} \left(\rho_1\times \dots \times \rho_r \right) $.

Si $b$ est la matrice diagonale dans $M^{\beta, \beta}$ d'\'el\'ements diagonaux $n_1, $ $n_2, \dots, n_r$, alors $\sigma(k_1, \dots, k_r)=\rho_1\otimes \dots \otimes \rho_r $ et $k_i=1$ pour tout $i$.

Il suffit donc de montrer que pour tout $b \in M^{\beta, \beta}$, $b$ non diagonale, et tout $(k_1, \dots, k_r)$, $\rho_1\otimes \dots \otimes \rho_r $ n'est pas un sous-quotient de $\sigma(k_1, \dots, k_r)$. Or, si $b$ n'est pas diagonale (par la condition \eqref{2} de la page \pageref{2} sur les matrices $b$), il existe $j <i$ et $j'>i'$ tels que $b_{ij}$ et $b_{i'j'}$ soient non nuls. Soient
\begin{eqnarray*}
&&i_0= \min \left\{ i: \exists j<i, b_{ij} \neq 0\right\} \qquad j_0= \min \left\{ j: b_{i_0j} \neq 0\right\},\\
&&i'_0= \max \left\{ i': \exists j'>i', b_{i'j'} \neq 0\right\} \qquad j'_0= \max \left\{ j': b_{i'_0j'} \neq 0\right\}.
\end{eqnarray*}

Par l'hypoth\`ese (1), puisque $j_0<i_0$, pour tout $k_{i_0}$, $\supp \left( \sigma^{(k_{i_0})}_{i_0j_0} \right) \nsubseteq \supp \left( \rho_{j_0}\right)$ et donc, $\rho_{j_0}$ ne peut pas \^etre un sous-quotient de $\sigma_{j_0}$. Ainsi, $\rho_1\otimes \dots \otimes \rho_r $ n'est pas un sous-quotient de $\sigma(k_1, \dots, k_r)$. 

Par l'hypoth\`ese (2), puisque $j'_0>i'_0$, pour tout $k_{i'_0}$, $\supp\left( \sigma^{(k_{i'_0})}_{i'_0j'_0} \right) \nsubseteq \supp \left( \rho_{j'_0}\right) $ et donc, $\rho_{j'_0}$ ne peut pas \^etre un sous-quotient de $\sigma_{j'_0}$. Ainsi, $\rho_1\otimes \dots \otimes \rho_r $ n'est pas un sous-quotient de $\sigma(k_1, \dots, k_r)$. 
%
%
%
\end{proof}

\begin{corollary}\label{mmm1}
Avec les notations pr\'ec\'edentes, supposons que pour tout $i$, $1 \leq i \leq r$, toute partition $\beta_i=\left( b_{i1}, b_{i2}\right)$ de $n_i$ et tout $\sigma^{(k)}_{i}=\sigma^{(k)}_{i1}\otimes \sigma^{(k)}_{i2} \in \JH\left(r_{\beta_i,n_i}(\rho_i)\right) $, on ait
\begin{enumerate}
\item Ou bien $\supp\left(\sigma^{(k)}_{i1} \right) \nsubseteq \underset{1 \leq j \leq i-1}{\bigcup} \supp \left( \rho_j \right)$;
\item Ou bien $\supp\left(\sigma^{(k)}_{i2} \right) \nsubseteq \underset{i+1 \leq j \leq r}{\bigcup} \supp \left( \rho_j \right)$.
\end{enumerate}
Alors $\rho_1\times \dots \times \rho_r $ a un seul sous-module irr\'eductible et sa multiplicit\'e dans l'induite est \'egale \`a $1$.
\end{corollary}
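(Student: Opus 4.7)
The plan is to deduce this corollary from Proposition~\ref{cons} via a standard socle argument, using Frobenius reciprocity~\eqref{frob} and exactness of the Jacquet functor. First I would observe that $\rho_1\times\dots\times\rho_r$ has finite length, so its socle $S$ is a finite direct sum of irreducible submodules counted with multiplicity, say of length $\ell\geq 1$; the claim then amounts to proving $\ell = 1$.

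The key step is to transfer information from the socle of the induced representation to a Jacquet-module multiplicity. For any irreducible submodule $\pi$ of $\rho_1\times\dots\times\rho_r$, the reciprocity~\eqref{frob} produces a nonzero $G_\alpha$-equivariant map $r_{\beta,n}(\pi)\to \rho_1\otimes\dots\otimes\rho_r$; since $\rho_1\otimes\dots\otimes\rho_r$ is irreducible, it is a quotient, hence a composition factor, of $r_{\beta,n}(\pi)$. Exactness of $r_{\beta,n}$ turns $S\hookrightarrow \rho_1\times\dots\times\rho_r$ into an embedding $r_{\beta,n}(S)\hookrightarrow r_{\beta,n}(\rho_1\times\dots\times\rho_r)$, and applying the previous observation to each of the $\ell$ irreducible summands of $S$ shows that $\rho_1\otimes\dots\otimes\rho_r$ appears at least $\ell$ times in $\JH(r_{\beta,n}(\rho_1\times\dots\times\rho_r))$.

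To conclude, I invoke Proposition~\ref{cons}, which says that this very multiplicity equals exactly~$1$. This forces $\ell = 1$, so $S$ is irreducible and hence $\rho_1\times\dots\times\rho_r$ admits a unique irreducible submodule, occurring with multiplicity one. The bulk of the combinatorics has already been done in Proposition~\ref{cons}; there is no real obstacle left here, only the bookkeeping between submodule multiplicity and composition-factor multiplicity inside the Grothendieck group $\mathcal{R}$.
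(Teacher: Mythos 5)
Your argument is correct and is essentially the paper's own proof: the paper isolates your socle step as Lemme~\ref{mult1} (two irreducible submodules would, via Frobenius reciprocity~\eqref{frob} and exactness of $r_{\beta,n}$, force $\rho_1\otimes\dots\otimes\rho_r$ to occur at least twice in $\JH\left(r_{\beta,n}(\rho_1\times\dots\times\rho_r)\right)$) and then concludes with Proposition~\ref{cons}, exactly as you do. The only point to keep in mind is that the final multiplicity-one assertion for the submodule inside the full Jordan--H\"older series of the induced representation requires applying the same exactness count to a complete composition series, not just to the socle, but this is the routine bookkeeping you indicate.
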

\begin{proof}
C'est une cons\'equence du lemme qui suit et de la proposition pr\'ec\'edente.
\end{proof}
\begin{lemma}\label{mult1}
Supposons que $\rho_1\otimes \dots \otimes \rho_r $ appara\^it avec multiplicit\'e $1$ dans 
\begin{eqnarray*}
&&\JH \left( r_{\beta, n} \left(\rho_1\times \dots \times \rho_r \right) \right) \\
&\text{(resp. }& \JH \left( \overline{r}_{\beta, n} \left(\rho_1\times \dots \times \rho_r \right) \right).\text{)}
\end{eqnarray*}
Alors $\rho_1\times \dots \times \rho_r $ a un seul sous-module irr\'eductible et sa multiplicit\'e dans l'induite est \'egale \`a $1$.
\end{lemma}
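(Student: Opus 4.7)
The strategy is to combine Frobenius reciprocity \eqref{frob} with the exactness of the Jacquet functor. Write $I = \rho_1 \times \dots \times \rho_r$ and $\rho = \rho_1 \otimes \dots \otimes \rho_r$; I focus on the first case (hypothesis on $r_{\beta,n}$), the parenthetical one being entirely analogous with Casselman's reciprocity \eqref{frobcas} replacing \eqref{frob}.

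The crucial observation is: for every irreducible subrepresentation $\pi \hookrightarrow I$, the representation $\rho$ appears as a composition factor of $r_{\beta,n}(\pi)$. Indeed, a nonzero embedding $\pi \hookrightarrow I$ corresponds via \eqref{frob} to a nonzero $G_\alpha$-morphism $r_{\beta,n}(\pi) \to \rho$; since $\rho$ is irreducible, this morphism is surjective, so $\rho$ is a quotient, hence a composition factor, of $r_{\beta,n}(\pi)$.

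Writing $\sy(I) = \sum_\tau m_\tau \tau$ over isomorphism classes of irreducible representations and using the exactness of $r_{\beta,n}$, the multiplicity of $\rho$ in $\sy(r_{\beta,n}(I))$ equals $\sum_\tau m_\tau \cdot n_\tau$, where $n_\tau$ denotes the multiplicity of $\rho$ in $\sy(r_{\beta,n}(\tau))$. By hypothesis this sum is $1$, so exactly one class $\tau_0$ satisfies $m_{\tau_0} = 1$ and $n_{\tau_0} = 1$, while $n_\tau = 0$ for every other composition factor $\tau$ of $I$. Combined with the crucial observation, any irreducible submodule of $I$ must be isomorphic to $\tau_0$; since $I$ has finite length and is nonzero, such a submodule exists. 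Hence $\tau_0$ is the unique irreducible subrepresentation of $I$, and its multiplicity in $\sy(I)$ equals $m_{\tau_0} = 1$.

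The argument poses no real obstacle; the only subtlety is to cleanly distinguish the multiplicity of $\tau_0$ as an irreducible subrepresentation of $I$ from its multiplicity in the Jordan--H\"older series of $I$ — Frobenius reciprocity together with the irreducibility of $\rho$ handles the first, while the multiplicity count above handles the second.
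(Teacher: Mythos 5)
Your proof is correct and rests on exactly the same two ingredients as the paper's: Frobenius reciprocity \eqref{frob} (resp.\ \eqref{frobcas}) to show that $\rho_1\otimes\dots\otimes\rho_r$ occurs in the Jacquet module of any irreducible submodule, and exactness of the Jacquet functor to compare multiplicities. The only difference is presentational — you run a direct multiplicity count in the Grothendieck group, which also makes the multiplicity-one assertion explicit, whereas the paper argues by contradiction from a hypothetical direct sum of two irreducible submodules.
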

\begin{proof}
Supposons qu'il existe $\pi_1$ et $\pi_2$ deux sous-modules irr\'eductibles de $\rho_1\times \dots \times \rho_r $ et posons $\pi= \pi_1 \oplus \pi_2$. Ainsi
$$\dim \left(\Hom_{G_n} \left( \pi, \rho_1\times \dots \times \rho_r\right) \right)\geq 2.$$
Par \eqref{frob}, on trouve 
$$\dim \left(\Hom_{G_\beta} \left( r_{\beta,n} \left(\pi\right), \rho_1\otimes \dots \otimes \rho_r\right) \right)\geq 2.$$
Or, par l'exactitude du foncteur de Jacquet, 
$$\JH \left( r_{\beta, n} \left( \pi \right) \right) \leq \JH \left( r_{\beta, n} \left(\rho_1\times \dots \times \rho_r \right) \right).$$
On trouve donc que $\rho_1\otimes \dots \otimes \rho_r $ appara\^it avec multiplicit\'e au moins $2$ dans $\JH \left( r_{\beta, n} \left(\rho_1\times \dots \times \rho_r \right) \right)$ ce qui est absurde, par hypoth\`ese.
\end{proof}

\section{Les blocs dans $Fin(G_{n})$}\label{bloques}
Pour un ensemble $X$, $M(X)$ sera l'ensemble de tous les multi-ensem\-bles dans $X$, \textit{i.e,} des fonctions $m:X \rightarrow \mathbb{N}$ \`a support fini. On d\'efinit la somme de multi-ensembles de fa\c{c}on naturelle.

Si $\Omega \in M\left( \mathcal{C} \right)$, on pose $$\left| \Omega \right|= \underset{\rho \in \mathcal{C}}{\sum} \Omega \left(\rho \right)\gr \left( \rho\right).$$

Soit $\Omega$ un multi-ensemble de repr\'esentations cuspidales, avec $\left| \Omega \right|=n$. On note $Fin(G_{n})$ la sous-cat\'egorie pleine de $Alg \left(G_n \right)$ des repr\'esentations de longueur finie et $Fin \left(\Omega \right)$ la sous-cat\'egorie pleine de $Fin(G_{n})$ telle que pour tout sous-quotient irr\'eductible $\pi$ de tout \'el\'ement dans $ Fin \left(\Omega \right)$, $\supp (\pi)=\Omega$. 

Alors (\textit{cf.} \cite[Theorem 7.3.2]{cas}) la cat\'egorie $Fin(G_{n})$ est produit direct des \textit{blocs} $Fin \left(\Omega \right)$ quand $\Omega$ parcourt tous les multi-ensembles de repr\'esentations cuspidales de $G_n$ avec $\left| \Omega \right|=n$.

On veut dire par cela, 
\begin{enumerate}
\item Tout $\pi \in Fin \left(G_n \right)$ est une somme directe de repr\'esentations $\pi_i \in Fin \left(\Omega_i \right)$, o\`u $\Omega_i \neq \Omega_j$ si $i \neq j$.
\item $\Hom_{G_n} \left( \pi_i, \pi_j\right)=0$ si $\pi_i \in Fin \left(\Omega_i \right)$ et $\Omega_i \neq \Omega_j$.
\end{enumerate}


Soient $n,t$ deux entiers positifs, $t<n$ et soit $\pi$ une repr\'esentation de longueur finie de $G_n$. La repr\'esentation $r_{\left(t,n-t\right),n}(\pi)$, \'etant de longueur finie, se d\'ecompose alors:
$$r_{\left(t,n-t\right),n}(\pi)=\overset{r}{ \underset{i=1}{\bigoplus}}\overset{s}{ \underset{j=1}{\bigoplus}}\Pi_{i,j}$$
o\`u, pour tous $1 \leq i \leq r$ et $1 \leq j \leq s$, $\Pi_{i,j}$ est une repr\'esentation de longueur finie de $G_t \times G_{n-t}$ telle que tout sous-quotient irr\'educitble de $\Pi_{i,j}$ est la forme $\pi_i \otimes \pi'_j$ avec $\pi_i \in \Omega_i$ et $\pi'_j \in \Omega'_j$ o\`u $\left\{ \Omega_i\right\}_{1 \leq i \leq r}$ et $\left\{ \Omega'_i\right\}_{1 \leq j \leq s}$ sont deux sous-ensembles de $M\left( \mathcal{C} \right)$ tels que $\left| \Omega_i \right|=t$, $\left| \Omega'_j \right|=n-t$, pour tous $1 \leq i \leq r$ et $1 \leq j \leq s$, et $\Omega_i \neq \Omega_{i'}$ si $i \neq i'$, et $\Omega'_j \neq \Omega'_{j'}$ si $j \neq j'$.


Si $\rho$ une repr\'esentation irr\'eductible de $G_t$, avec $\supp \left( \rho \right) = \Omega_i$, on pose  $$\Jac_{\rho}(\pi)=\underset{1 \leq j \leq s}{\bigoplus}\Pi_{i,j},$$ et si $\rho'$ une repr\'esentation irr\'eductible de $G_{n-t}$ avec $\supp \left( \rho' \right) = \Omega'_j$, on pose $$\overline{\Jac}_{\rho'}(\pi)=\underset{1 \leq i \leq r}{\bigoplus}\Pi_{i,j}.$$

Remarquons que $\Jac_{\rho}(\pi)$ et $\overline{\Jac}_{\rho'}(\pi)$ ne d\'ependent que du support cuspidal de $\rho$ et $\rho'$, respectivement. Remarquons aussi que on obtient le foncteur $\overline{\Jac} _{\rho}$ \`a partir du foncteur $\Jac_{\rho}$ en changeant dans la d\'efinition de $\Jac_{\rho}$ le foncteur de Jacquet par son foncteur oppos\'e (d'o\`u la notation).

\begin{proposition}\label{JA}
Supposons qu'il existe $\rho \in \Irr(G_{t})$, $V \in \Irr(G_{n-t})$ telles que $\rho \otimes V$ soit un sous-quotient de $r_{\left(t,n-t\right),n}(\pi)$. Il existe alors $\rho'$ une repr\'esentation irr\'eductible avec $\supp \left( \rho' \right)= \supp \left( \rho \right) $ et une repr\'esentation $V' \in \Irr(G_{n-t})$ telles que $\pi$ soit un sous-module de $\rho' \times V'$. 
\end{proposition}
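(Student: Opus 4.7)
Mon plan est de combiner la d\'ecomposition en blocs du module de Jacquet avec la r\'eciprocit\'e de Frobenius \eqref{frob} pour construire le plongement voulu.

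Premi\`erement, l'hypoth\`ese que $\rho \otimes V$ est un sous-quotient de $r_{(t,n-t),n}(\pi)$ entra\^ine qu'il existe un indice $i_0$ avec $\Omega_{i_0} = \supp(\rho)$ tel que $\Pi_{i_0,j}$ soit non nul pour au moins un $j$. La repr\'esentation $\Jac_\rho(\pi) = \bigoplus_{j} \Pi_{i_0,j}$ est donc non nulle et de longueur finie dans la cat\'egorie des repr\'esentations de $G_t \times G_{n-t}$.

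Deuxi\`emement, $\Jac_\rho(\pi)$ admet un quotient irr\'eductible, qui s'\'ecrit $\rho' \otimes V'$ avec $\rho' \in \Irr(G_t)$ et $V' \in \Irr(G_{n-t})$ (les repr\'esentations irr\'eductibles d'un produit de groupes \'etant des produits tensoriels d'irr\'eductibles). Par construction m\^eme de $\Jac_\rho$, ce quotient appartient au bloc index\'e par $\Omega_{i_0}$, d'o\`u $\supp(\rho') = \Omega_{i_0} = \supp(\rho)$. Comme $\Jac_\rho(\pi)$ est un facteur direct de $r_{(t,n-t),n}(\pi)$, la repr\'esentation $\rho' \otimes V'$ est \'egalement un quotient de $r_{(t,n-t),n}(\pi)$, et la r\'eciprocit\'e de Frobenius fournit un morphisme non nul $\pi \to \rho' \times V'$.

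Troisi\`emement, l'\'enonc\'e sous-entend que $\pi$ est irr\'eductible (faute de quoi l'assertion \og $\pi$ sous-module de $\rho' \times V'$\fg{} ne se trouverait pas automatiquement assur\'ee par la non-nullit\'e du morphisme); sous cette hypoth\`ese, le morphisme ci-dessus est n\'ecessairement injectif, ce qui conclut. L'argument ne pr\'esente aucun obstacle s\'erieux: c'est un enchainement formel entre la d\'ecomposition en blocs et la r\'eciprocit\'e de Frobenius. Le seul point d\'elicat, s'il y en a un, est l'identification correcte du bloc auquel appartient le quotient irr\'eductible choisi, qui se r\'eduit ici \`a relire la d\'efinition du foncteur $\Jac_\rho$.
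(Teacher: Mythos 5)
Votre d\'emonstration est correcte et suit essentiellement la m\^eme d\'emarche que l'article~: non-nullit\'e de $\Jac_{\rho}(\pi)$, choix d'un quotient irr\'eductible $\rho'\otimes V'$ de $\Jac_{\rho}(\pi)$, composition avec la projection canonique $r_{(t,n-t),n}(\pi)\twoheadrightarrow \Jac_{\rho}(\pi)$, puis r\'eciprocit\'e de Frobenius \eqref{frob}. Votre remarque finale sur l'irr\'eductibilit\'e de $\pi$, n\'ecessaire pour passer du morphisme non nul au plongement, explicite un point que l'article laisse implicite~; c'est bien ainsi que la proposition est utilis\'ee ensuite.
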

\begin{proof}
Par hypoth\`ese, $\Jac_{\rho}(\pi)\neq 0$. Soit $\rho' \otimes V'$ un quotient irr\'eductible de $\Jac_{\rho}(\pi)$. Alors $\rho'$ est une repr\'esentation irr\'eductible avec $\supp \left( \rho' \right)= \supp \left( \rho \right) $ et, la compos\'ee du morphisme non nul dans 
$$\Hom_{G_{(t,n-t)}} \left( \Jac_{\rho}(\pi), \rho' \otimes V' \right) \neq 0$$
avec le morphisme surjectif canonique de $r_{\left(t,n-t\right),n}(\pi)$ vers $\Jac_{\rho}(\pi)$ montre que
$$\Hom_{G_{(t,n-t)}} \left( r_{\left(t,n-t\right),n}(\pi), \rho' \otimes V' \right) \neq 0,$$
et, par r\'eciprocit\'e de Frobenius on trouve le r\'esultat cherch\'e.
\end{proof}
\section{Repr\'esentations de ${\rm GL}_n(D)$}\label{sectadic} 
On va rappeler ici quelques r\'esultats de \cite{Tadic} et \cite{Mi2}. Dans \cite[\textsection 2]{Tadic} (voir aussi \cite[Theorem 3.4]{Se4} pour une preuve diff\'erente valable en toute caract\'eristique), on montre qu'il existe une fonction qui, \`a chaque repr\'esentation cuspidale $\rho$ associe un entier strictement positif $s_{\rho}$ tel que, si $\rho_1$ et $\rho_2$ sont deux repr\'esentations cuspidales de ${\rm GL}_{n_1}(D)$ et ${\rm GL}_{n_2}(D)$, $n_1, n_2 \geq 1$ alors $\rho_1\times \rho_2$ est r\'eductible si, et seulement si 
$$\rho_1 = \nu^{s_{\rho_1}} \rho_2\text{ ou } \rho_1 = \nu^{-s_{\rho_1}} \rho_2. $$

Pour toute repr\'esentation $\rho$ cuspidale on pose $ \nu_{\rho}= \nu^{s_{\rho}}.$

Soit $\rho\in\mathcal{C}$, $n\in \mathbb{N}^{\ast}$. On pose 
$$\Delta=\left\{\rho, \nu_{\rho}\rho,\dots, \nu^{n-1}_{\rho}\rho\right\}.$$ On appelle $\Delta$ un segment et l'ensemble de tous les segments sera not\'e $S$. 

On dit que $\Delta=\left\{\rho, \nu_{\rho}\rho,\dots , \nu^{n-1}_{\rho}\rho\right\}$, $\Delta'=\left\{\rho', \nu_{\rho'}\rho',\dots,\nu^{n'-1}_{\rho'}\rho'\right\}$ sont li\'es si $\Delta \cup \Delta'$ est encore un segment et $\Delta \nsubseteq \Delta'$ et $\Delta' \nsubseteq \Delta$.

On dit que $\Delta$ pr\'ec\`ede $\Delta'$ s'ils sont li\'es et il existe $\tau \in \Delta$ tel que $\rho' =\tau \nu_{\tau}$. On dit que $\Delta \geq \Delta'$ s'il existe $r \in \mathbb{N}^\ast$ tel que 
$$\rho = \nu_\rho ^r \rho', \text{ ou bien }\rho=\rho' \text{ et }n \geq n'.$$ 
L'ordre ainsi d\'efini n'est total que si l'on se restreint \`a des segments inclus dans l'ensemble $\left\{\rho \nu_{\rho}^{t} : t\in \mathbb{Z} \right\}$, o\`u $\rho$ est fix\'e..

Un \textit{multisegment} est un multi-ensemble de segments de la forme ci-dessus. On dira qu'un multisegment est rang\'e si l'on peut l'identifier \`a l'ensemble indic\'e $(\Delta_1, \dots, \Delta_r)$ o\`u
$$\Delta_r \nless \Delta_{r-1} \nless \dots \nless \Delta_2 \nless \Delta_1.$$

\begin{proposition}
A chaque segment $\Delta=\left\{\rho, \nu_{\rho}\rho,\dots,\nu^{n-1}_{\rho}\rho\right\}$, $\rho$ \'etant une repr\'esentation cuspidale de $G_p$, on peut associer des repr\'esentations irr\'eductibles $ \left<\Delta \right>$ et $ \left<\Delta \right>^t$ telles que
\begin{enumerate}
\item $ \left<\Delta \right>$ (resp. $ \left<\Delta \right>^t$) est l'unique sous-module (resp. quotient) irr\'eductible de $\rho \times \nu_{\rho}\rho \times \dots \times \nu^{n-1}_{\rho}\rho$.
\item $r_{(p,\dots,p),np}\left( \left<\Delta \right> \right)= \rho \otimes \nu_{\rho}\rho \otimes \dots \otimes \nu^{n-1}_{\rho}\rho$ (resp. $r_{(p,\dots,p),np}\left( \left<\Delta \right>^t \right) = \nu^{n-1}_{\rho}\rho \otimes \dots \otimes \nu_{\rho}\rho \otimes \rho$).
\end{enumerate}
Les repr\'esentations $ \left<\Delta \right>$ et $ \left<\Delta \right>^t$ sont aussi caract\'eris\'ees par la propri\'et\'e (2). L'ensemble de repr\'esentations de la forme $ \left<\Delta \right>^t$ est l'ensemble des repr\'esentations essentiellement de carr\'e int\'egrable. 
\end{proposition}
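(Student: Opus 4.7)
I would proceed in four steps: existence of $\langle\Delta\rangle$ and $\langle\Delta\rangle^t$, the Jacquet module computation (property (2)), the characterization by (2), and the identification of the $\langle\Delta\rangle^t$ with essentially square-integrable representations.

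For existence, I would apply Corollary \ref{mmm1} to the ordered sequence $\rho_i=\nu_\rho^{i-1}\rho$, $i=1,\ldots,n$. Each $\rho_i$ being cuspidal, every proper Jacquet module $r_{\beta_i,p}(\rho_i)$ vanishes, so the support hypotheses of the corollary are vacuously satisfied; thus $\rho\times\nu_\rho\rho\times\cdots\times\nu_\rho^{n-1}\rho$ admits a unique irreducible submodule, which I call $\langle\Delta\rangle$, appearing with multiplicity one. For $\langle\Delta\rangle^t$ as unique irreducible quotient I would run the same argument with $\overline{r}$ replacing $r$: Lemma \ref{mult1} is already stated for both, and Corollary \ref{mmm1} has an obvious analogue via the opposite-direction geometric lemma.

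For property (2), since the $\rho_i$ are pairwise distinct cuspidals of $G_p$, the only matrices $b\in M^{\alpha,\alpha}$ contributing in the geometric lemma are the $n!$ permutation matrices; hence $r_{(p,\ldots,p),np}(\rho_1\times\cdots\times\rho_n)$ has composition factors $\sigma_w=\rho_{w(1)}\otimes\cdots\otimes\rho_{w(n)}$, $w\in S_n$, each of multiplicity one. By exactness, $r_\alpha(\langle\Delta\rangle)$ embeds in this Jacquet module, and Frobenius reciprocity \eqref{frob} applied to the inclusion $\langle\Delta\rangle\hookrightarrow\rho_1\times\cdots\times\rho_n$ forces $\sigma_{\mathrm{id}}=\rho_1\otimes\cdots\otimes\rho_n$ to be a quotient of $r_\alpha(\langle\Delta\rangle)$. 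To sharpen this into an equality, I would induct on $n$. For $n=1$ it is trivial. For $n\geq 2$, set $\Delta'=\Delta\setminus\{\rho\}$; by induction $\langle\Delta'\rangle\hookrightarrow\nu_\rho\rho\times\cdots\times\nu_\rho^{n-1}\rho$, and inducing with $\rho$ gives a subrepresentation $\rho\times\langle\Delta'\rangle$ of $\rho\times\nu_\rho\rho\times\cdots\times\nu_\rho^{n-1}\rho$, whose socle must contain $\langle\Delta\rangle$ by multiplicity one; so $\langle\Delta\rangle\hookrightarrow\rho\times\langle\Delta'\rangle$. By Frobenius, $r_{(p,(n-1)p),np}(\langle\Delta\rangle)$ surjects onto $\rho\otimes\langle\Delta'\rangle$. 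If any non-identity permutation $\sigma_w$ were a subquotient of $r_\alpha(\langle\Delta\rangle)$, passing through a coarser Jacquet step and applying Proposition \ref{JA} would produce an embedding $\langle\Delta\rangle\hookrightarrow\rho_i\times V'$ with $i>1$; transitivity of Jacquet modules combined with the inductive hypothesis applied to $V'$ would then yield a second embedding of $\langle\Delta\rangle$ into $\rho_1\times\cdots\times\rho_n$, contradicting the multiplicity-one statement of step 1. The dual argument with $\overline{r}$ and \eqref{frobcas} handles $\langle\Delta\rangle^t$. The characterization by (2) is then immediate: any irreducible $\pi$ with $r_\alpha(\pi)=\rho_1\otimes\cdots\otimes\rho_n$ satisfies $\Hom(\pi,\rho_1\times\cdots\times\rho_n)\neq 0$ by \eqref{frob}, hence $\pi\simeq\langle\Delta\rangle$ by uniqueness of the submodule; the $\langle\Delta\rangle^t$ case is dual via \eqref{frobcas}.

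Finally, $r_\alpha(\langle\Delta\rangle^t)=\nu_\rho^{n-1}\rho\otimes\cdots\otimes\rho$ has its single exponent strictly decreasing along the central characters of the Levi factors, so Casselman's square-integrability criterion yields essential square-integrability after a central twist; the exhaustion statement, that every essentially square-integrable representation of some $G_n$ has this form, is the content of \cite[\S2]{Tadic} which I would invoke directly. The main obstacle in the whole argument is the equality step in (2): because all $\rho_i$ lie in a single inertial class, the Jacquet module $r_\alpha(\rho_1\times\cdots\times\rho_n)$ need not be semisimple, so the identity-permutation summand cannot simply be split off, and the inductive argument combined with Proposition \ref{JA} is precisely what is needed to exclude the non-identity permutations from $r_\alpha(\langle\Delta\rangle)$.
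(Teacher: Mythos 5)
Your part (1) is fine: Corollaire \ref{mmm1} does apply to the sequence $\rho_i=\nu_\rho^{i-1}\rho$, though not ``vacuously'' --- for the trivial partitions $(p,0)$ and $(0,p)$ the hypotheses hold precisely because the $\rho_i$ are pairwise distinct cuspidal representations. Likewise the count of the shuffle factors $\sigma_w$, $w\in S_n$, each with multiplicity one, is correct. (For the record, the paper offers no internal argument at all here: it simply cites T.adi\'c, so any complete proof you give is necessarily a different route; the question is whether yours closes.)

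It does not: the exclusion of the non-identity orderings from $r_{(p,\dots,p)}(\langle\Delta\rangle)$ is a genuine gap. First, a non-identity $w$ may satisfy $w(1)=1$, in which case Proposition \ref{JA} only gives an embedding $\langle\Delta\rangle\hookrightarrow\rho_1\times V'$ and produces nothing; and when $w(1)=i>1$ the representation $V'$ has cuspidal support $\{\rho_j:j\neq i\}$, which is not a segment, so ``the inductive hypothesis applied to $V'$'' is not available. Second, and more seriously, even granting an embedding $\langle\Delta\rangle\hookrightarrow\rho_i\times V'$ and then $V'$ into a product of cuspidals, what you obtain is an embedding of $\langle\Delta\rangle$ into the product of the \emph{same} cuspidals taken in a \emph{different order}; this does not contradict the multiplicity-one statement of step 1, which concerns the Jordan--H\"older series (identical for all orderings) and the submodules of the one ordering $\rho_1\times\cdots\times\rho_n$ only. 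Here is a decisive test: your whole argument uses only that the $\nu_\rho^{i-1}\rho$ are pairwise distinct, never the definition of $\nu_\rho=\nu^{s_\rho}$ through the reducibility point. Run it with $\nu^{s}$, $s$ generic, in place of $\nu_\rho$: then $\rho\times\nu^{s}\rho$ is irreducible, its unique irreducible submodule is the whole induced representation, and its Jacquet module contains both orderings --- so the conclusion your argument would deliver is false, hence the argument cannot be complete. Any proof of property (2) must invoke the reducibility theorem recalled at the beginning of Section \ref{sectadic} (T.adi\'c's unitarity argument, or S\'echerre's), e.g.\ to settle the case $n=2$ (length two, one ordering in the Jacquet module of the submodule, the other in that of the quotient) before an induction such as the one you sketch via $\langle\Delta\rangle\hookrightarrow\rho\times\langle\Delta'\rangle$ can even start. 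Your final paragraph (Casselman's criterion giving essential square-integrability, exhaustion by citing T.adi\'c) is fine, but it presupposes (2), which is exactly the missing step.
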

\begin{proof}
\textit{Cf.} \cite[2.7.]{Tadic}
\end{proof}

A partir d'un argument d'unitarit\'e on montre le th\'eor\`eme suivant, cl\'e dans toute la construction qui suit:
\begin{theorem}\label{mover}
Les conditions suivantes sont \'equivalentes
\begin{enumerate}
\item Pour tous $1 \leq i,j \leq r$, les segments $\Delta_i$ et $\Delta_j$ ne sont pas li\'es.
\item $\left< \Delta_1 \right> \times \dots \times \left< \Delta_n \right>$ est irr\'eductible.
\item $\left< \Delta_1 \right>^t \times \dots \times \left< \Delta_n \right>^t$ est irr\'eductible.
\end{enumerate}
\end{theorem}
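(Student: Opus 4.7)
The approach is to establish the equivalences $(1) \Leftrightarrow (2)$ and $(1) \Leftrightarrow (3)$ in parallel, from which $(2) \Leftrightarrow (3)$ follows formally. The contrapositive directions $(2) \Rightarrow (1)$ and $(3) \Rightarrow (1)$ are combinatorial via the geometric lemma of Section \ref{lemageo}, while the forward directions rely on a unitarity argument via Bernstein's theorem on the irreducibility of unitary parabolic induction.

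For the contrapositive, assume by contraposition that some pair $\Delta_i, \Delta_j$ is linked. In the two-segment case, I would apply the geometric lemma to compute $r_{\beta,n}(\left<\Delta_i\right> \times \left<\Delta_j\right>)$ along the fully decomposed cuspidal parabolic: besides the trivial matrix of $M^{\beta,\gamma}$ (which contributes an irreducible constituent whose cuspidal support appears in the original order), the overlap $\Delta_i \cap \Delta_j$ produces a non-trivial matrix yielding a second irreducible constituent in which the cuspidal support of $\Delta_i \cup \Delta_j$ appears rearranged. Hence $\left<\Delta_i\right> \times \left<\Delta_j\right>$ is already reducible. For $r \geq 3$, the block decomposition of Section \ref{bloques} combined with Proposition \ref{JA} produces two distinct irreducible subrepresentations of the full induction, and the same analysis transports to the $\left<\Delta_k\right>^t$ version.

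For $(1) \Rightarrow (2), (3)$, assume pairwise non-linkedness. The key case is $r=2$: since each $\left<\Delta_i\right>^t$ is essentially square-integrable, I write $\left<\Delta_i\right>^t = \nu^{e_i}\sigma_i$ with $\sigma_i$ unitary square-integrable and $e_i \in \mathbb{R}$. After reordering so that $e_1 \geq e_2$, the non-linkedness hypothesis ensures that standard intertwining operators have no poles, and the unitary case $e_1 = e_2$ reduces to Bernstein's theorem on irreducibility of unitary parabolic induction (valid in $G_n$ by Tadic and \cite{Se4}); the general essentially square-integrable case is obtained by analytic twisting preserving non-linkedness. To pass from $r=2$ to the general case, I would induct on $r$ using Corollary \ref{mmm1}, which under the non-linked hypothesis provides a unique irreducible submodule of multiplicity one in the full induction; the two-segment irreducibility then allows transposition of adjacent factors freely, forcing the unique submodule to coincide with the unique quotient and hence with the full induction. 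The argument with $\left<\Delta_i\right>$ in place of $\left<\Delta_i\right>^t$ is formally identical.

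The principal technical obstacle is the transfer of Bernstein's unitarity theorem from the split case ${\rm GL}_n(F)$ to the inner form $G_n = {\rm GL}_n(D)$; this has been carried out by Tadic and, in arbitrary residual characteristic, by S\'echerre in \cite{Se4}. Granted this input, the remainder of the proof is a careful combinatorial manipulation combining the geometric lemma of Section \ref{lemageo} with the block decomposition of Section \ref{bloques}.
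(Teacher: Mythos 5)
The paper does not reprove this statement at all: it deduces $(1)\Leftrightarrow(3)$ by citing \cite[2.5]{Tadic} (where the unitarity argument is carried out) and $(2)\Leftrightarrow(3)$ from \cite[Corollaire 3.9(b)]{Aubert}, since the Zelevinsky involution exchanges $\left<\Delta\right>$ and $\left<\Delta\right>^t$ and preserves irreducibility. Your attempt to reprove everything from scratch has genuine gaps. In the reducibility direction, exhibiting via the geometric lemma a Jacquet-module constituent whose cuspidal support appears ``rearranged'' does not imply that $\left<\Delta_i\right>\times\left<\Delta_j\right>$ is reducible: irreducible representations also have large Jacquet modules along the cuspidal parabolic. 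The known proofs in the linked case construct an explicit proper nonzero subquotient, e.g. via the exact sequence
$$0\rightarrow \left<\Delta\cup\Delta'\right>^t\times\left<\Delta\cap\Delta'\right>^t\rightarrow\left<\Delta\right>^t\times\left<\Delta'\right>^t\rightarrow\left<\Delta,\Delta'\right>^t\rightarrow 0$$
used in Lemma \ref{lemme2}, and your sketch does not supply this step.

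In the other direction there are three problems. First, your induction from $r=2$ to general $r$ rests on Corollaire \ref{mmm1}, but its hypotheses are not consequences of pairwise non-linkedness: already for $\Delta_1=\Delta_2$ (or for nested segments on the same cuspidal line), taking $i=1$ and the partition $\beta_1=(0,n_1)$ gives $\supp\left(\sigma^{(k)}_{11}\right)=\emptyset$ and $\supp\left(\sigma^{(k)}_{12}\right)=\supp(\rho_1)\subseteq\supp(\rho_2)$, so both alternatives of \ref{mmm1} fail exactly in unlinked cases you must cover; the induction therefore collapses. Second, the claim that the case of the $\left<\Delta_i\right>$ is ``formally identical'' is not correct: $\left<\Delta_i\right>$ is not essentially square-integrable, so the reduction to a unitary square-integrable twist and the irreducibility of tempered induction do not apply; one needs either unitarizability of these Speh-type representations together with a statement of type (U0), or, as the paper does, the Aubert--Zelevinsky involution. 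Third, the $r=2$ unitarity/intertwining sketch (``no poles'', ``analytic twisting preserving non-linkedness'') is precisely the content of \cite[2.5]{Tadic}: as written, these steps are unjustified, since irreducibility is not preserved under deformation and locating the poles of the intertwining operators is essentially equivalent to knowing the reducibility points. So the proposal is not a proof; either cite \cite[2.5]{Tadic} and \cite[Corollaire 3.9(b)]{Aubert} as the paper does, or fill in these steps in detail.
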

\begin{proof}
1 \'equivaut \`a 3 par \cite[2.5.]{Tadic}. 2 \'equivaut \`a 3 par \cite[Corollaire 3.9.(b)]{Aubert}, car les repr\'esentations $\left< \Delta \right>$ et $\left< \Delta \right>^t $ se correspondent par l'involution de Zelevinsky.
\end{proof}

Les th\'eor\`emes ci-dessous, classifient toutes les repr\'esentations irr\'eductibles de $G_n$ en fonction des repr\'esentations cuspidales de $G_i$, $i \leq n$.
\begin{theorem}\label{Z}
\begin{enumerate}
\item Soient $\Delta_1,\dots,\Delta_r$ des segments et supposons que, si $i <j$, $\Delta_i$ ne pr\'ec\`ede pas $\Delta_j$. Alors la repr\'esentation $ \left<\Delta_1 \right>^t\times \dots \times \left<\Delta_r \right>^t$ admet un unique quotient irr\'eductible. Il est not\'e $ \left<\Delta_1,\dots ,\Delta_r \right>^t$. La multiplicit\'e de $ \left<\Delta_1,\dots ,\Delta_r \right>^t$ dans $ \JH \left(\left<\Delta_1 \right>^t\times \dots \times \left<\Delta_r \right>^t\right)$ est \'egale \`a $1$.
\item Les repr\'esentations  $ \left<\Delta_1,\dots ,\Delta_r \right>^t$ et $ \left<\Delta'_1,\dots ,\Delta'_{r'} \right>^t$ sont \'equivalentes si, et seulement si, les suites $( \Delta_1 ,  \dots  ,$ $\Delta_r)$ et $(\Delta'_1,\dots,\Delta'_{r'})$ sont \'egales \`a l'ordre pr\`es.
\item Toute repr\'esentation irr\'eductible de $G_n$ peut s'\'ecrire sous la forme $ \left<\Delta_1,\dots ,\Delta_r \right>^t$.
\end{enumerate}
\end{theorem}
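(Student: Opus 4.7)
The plan is to treat the three parts in order, using the tools of Sections \ref{lemageo}--\ref{bloques}. For part (1), I would apply the opposite-parabolic analogue of Proposition \ref{cons} with $\rho_i := \langle\Delta_i\rangle^t$, combined with the Casselman reciprocity \eqref{frobcas}. The central input is the Jacquet-module computation for a Steinberg-type representation: using the characterization of $\langle\Delta_i\rangle^t$ from the proposition preceding Theorem \ref{mover}, transitivity of Jacquet functors, and exactness, one shows that for any two-block partition $(c_1 p_i, c_2 p_i)$ of $n_i p_i$ (where $p_i := \gr(\rho_i)$), the Jacquet module $r_{(c_1 p_i, c_2 p_i), n_i p_i}(\langle\Delta_i\rangle^t)$ is the single irreducible representation $\langle\Delta_i^+\rangle^t \otimes \langle\Delta_i^-\rangle^t$, where $\Delta_i = \Delta_i^- \sqcup \Delta_i^+$ splits uniquely into an initial sub-segment $\Delta_i^-$ of length $c_2$ and a terminal sub-segment $\Delta_i^+$ of length $c_1$. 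The combinatorial hypothesis of Proposition \ref{cons} then follows from the non-preceding assumption: if both $\supp\langle\Delta_i^+\rangle^t \subseteq \bigcup_{j<i}\supp\langle\Delta_j\rangle^t$ and $\supp\langle\Delta_i^-\rangle^t \subseteq \bigcup_{j>i}\supp\langle\Delta_j\rangle^t$ failed simultaneously, tracking the extreme cuspidals $\nu_{\rho_i}^{n_i-1}\rho_i$ and $\rho_i$ and unfolding the definition of \emph{preceding} would yield a pair $(\Delta_m, \Delta_n)$ with $m<n$ and $\Delta_m$ preceding $\Delta_n$, contradicting the hypothesis. This gives multiplicity one of $\langle\Delta_1\rangle^t \otimes \cdots \otimes \langle\Delta_r\rangle^t$ in the appropriate Jacquet module, and an argument dual to Lemma \ref{mult1} via \eqref{frobcas} yields the unique irreducible quotient with multiplicity one.

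For part (2), the cuspidal support $\bigcup_i \Delta_i$, as a multiset in $M(\mathcal{C})$, is an invariant of $\langle\Delta_1,\ldots,\Delta_r\rangle^t$; recovering the segments individually can be done by iterating the functors $\Jac_\rho$ of Section \ref{bloques}, detecting at each step an extremal segment (for instance one of maximal length among those whose initial cuspidal $\rho_i$ has $\nu_{\rho_i}^{-1}\rho_i$ absent from the support) from the non-vanishing and shape of iterated Jacquet modules along the chain $\rho_i, \nu_{\rho_i}\rho_i, \ldots$, and one concludes by induction on $r$. For part (3), I would induct on $|\Omega|$ with $\Omega = \supp\pi$: given irreducible $\pi \in \Irr(G_n)$, pick a minimal cuspidal $\rho \in \Omega$ (with $\nu_\rho^{-1}\rho \notin \Omega$); non-vanishing of $\Jac_\rho(\pi)$ and Proposition \ref{JA} yield an embedding $\pi \hookrightarrow \rho \times V$ with $V$ irreducible of support $\Omega \setminus \{\rho\}$. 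The inductive hypothesis gives $V = \langle\Delta'_1,\ldots,\Delta'_s\rangle^t$, one merges $\rho$ with the $\Delta'_j$'s (either prepending to a $\Delta'_j$ if the union is again a segment, or adding as a new singleton), reorders via Theorem \ref{mover} to satisfy the non-preceding condition, and invokes part (1) to identify $\pi$ with the $\langle\cdot\rangle^t$ attached to the resulting multisegment.

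The main obstacle is the combinatorial verification in part (1): from the simultaneous failure of the two support conditions one must extract a specific preceding pair $(\Delta_m, \Delta_n)$ with $m<n$, which requires careful case analysis on how the sub-segments $\Delta_i^\pm$ are covered by the other $\Delta_j$'s together with the precise form of the linkage relation. A secondary difficulty in part (3) is ensuring that the merged multisegment indeed satisfies the non-preceding condition and that the $\langle\cdot\rangle^t$ it produces is $\pi$ itself rather than another composition factor of $\rho \times V$.
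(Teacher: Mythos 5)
Your route is necessarily different from the paper's, because the paper does not prove Théorème \ref{Z} at all: the proof given is the citation \cite[\S 2]{Tadic}, where Tadi\'c reduces to the general Langlands classification (after reordering by central exponents and grouping unlinked factors, using what is here Théorème \ref{mover}). Your proposal is a self-contained argument via Proposition \ref{cons}, Corollaire \ref{mmm1} and the reciprocity \eqref{frobcas}, and its part (1) contains a genuine gap: the claim that the non-preceding hypothesis implies the support hypothesis of Proposition \ref{cons} (and hence multiplicity one of $\langle\Delta_1\rangle^t\otimes\cdots\otimes\langle\Delta_r\rangle^t$ in the Jacquet module of the product) is false. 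Simplest counterexample: $\Delta_1=\Delta_2=\Delta$. Equal segments are not linked, so neither precedes the other and the ordering hypothesis of the theorem is satisfied; yet in the geometric lemma the antidiagonal matrix contributes a second copy of $\langle\Delta\rangle^t\otimes\langle\Delta\rangle^t$ to $\overline{r}_{(n,n)}\left(\langle\Delta\rangle^t\times\langle\Delta\rangle^t\right)$, so the multiplicity is $2$ and no criterion of the type of Lemme \ref{mult1} (or its dual) can apply. This is not only about repetitions: take $\Delta_1=\{\rho,\nu_\rho\rho\}$, $\Delta_2=\{\nu_\rho\rho\}$ (nested, hence unlinked, hence an admissible order). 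The matrix $b=\left(\begin{smallmatrix} p & p\\ p & 0\end{smallmatrix}\right)$ gives the constituent $(\rho\times\nu_\rho\rho)\otimes\nu_\rho\rho$ of $\overline{r}_{(2p,p)}\left(\langle\Delta_1\rangle^t\times\langle\Delta_2\rangle^t\right)$, which contains $\langle\Delta_1\rangle^t\otimes\langle\Delta_2\rangle^t$ a second time. In both examples the conclusion of the theorem holds trivially (the products are irreducible by Théorème \ref{mover}), so your proposed "extraction of a preceding pair $(\Delta_m,\Delta_n)$, $m<n$, from the simultaneous failure of the two support conditions" cannot exist: those conditions genuinely fail for admissible multisegments. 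This is precisely why the paper's own Théorème \ref{l} imposes the extra condition \eqref{C} (pairwise equal or disjoint segments) and replaces the naive multiplicity argument by the refined invariant $l_\pi^{\supp(\rho)}$.

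To repair part (1) you would need an additional reduction before invoking the multiplicity-one criterion, e.g.\ group equal segments together (their product is irreducible by Théorème \ref{mover}) and, more generally, commute unlinked neighbouring factors so as to reach a strictly decreasing order of central exponents, at which point one is in the setting of the usual Langlands-quotient uniqueness argument --- which is essentially the proof in \cite[\S 2]{Tadic} that the paper relies on. Your sketches of (2) and (3) are plausible in outline but also lean on (1), and the injectivity in (2) by iterating $\Jac_\rho$ requires a genuinely careful argument (in the literature it is done via the ring $\mathcal{R}$ and induction); as written, the decisive obstruction is the false intermediate claim in part (1).
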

\begin{proof}
\textit{Cf.} \cite[\textsection 2]{Tadic}
\end{proof}
C'est une param\'etrisation \textit{\`a la Langlands}.
De m\^eme, on trouve un th\'eor\`eme similaire, \textit{\`a la Zelevinsky}, quand on change le mot "quotient" par "sous-module":
\begin{theorem}
\begin{enumerate}
\item Soient $\Delta_1,\dots,\Delta_r$ des segments et supposons que, si $i <j$, $\Delta_i$ ne pr\'ec\`ede pas $\Delta_j$. Alors la repr\'esentation $ \left<\Delta_1 \right>\times \dots \times \left<\Delta_r \right>$  admet une unique sous-repr\'esentation  irr\'eductible. On la note $ \left<\Delta_1,\dots ,\Delta_r \right>$. La multiplicit\'e de $ \left<\Delta_1,\dots ,\Delta_r \right>$ dans $\JH \big( \left<\Delta_1 \right>\times \left<\Delta_2 \right>\times\dots \times \left<\Delta_r \right> \big)$ est \'egale \`a $1$.
\item Les repr\'esentations $ \left<\Delta_1,\dots ,\Delta_r \right>$ et $ \left<\Delta'_1,\dots ,\Delta'_{r'} \right>$ sont \'equivalentes si, et seulement si, les suites $( \Delta_1 ,  \dots  ,$ $\Delta_r)$ et $(\Delta'_1,\dots,\Delta'_{r'})$ sont \'egales \`a l'ordre pr\`es.
\item Toute repr\'esentation irr\'eductible de $G_n$ peut s'\'ecrire sous la forme $ \left<\Delta_1,\dots ,\Delta_r \right>$.
\end{enumerate}
\end{theorem}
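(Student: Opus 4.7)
Ce th\'eor\`eme est le dual ``\`a la Zelevinsky'' du Th\'eor\`eme \ref{Z}, et mon plan est de suivre la m\^eme strat\'egie en rempla\c{c}ant partout $\langle \Delta_i \rangle^t$ par $\langle \Delta_i \rangle$ et le mot ``quotient'' par ``sous-module''. Je pose $\rho_i = \langle \Delta_i \rangle$ et j'invoque le Corollaire \ref{mmm1}. L'ingr\'edient cl\'e est la description des foncteurs de Jacquet de $\langle \Delta_i \rangle$ le long d'une partition \`a deux blocs : \'ecrivant $\Delta_i = \{\rho_i^\ast, \nu_{\rho_i^\ast} \rho_i^\ast, \dots, \nu_{\rho_i^\ast}^{n_i - 1} \rho_i^\ast\}$ et $p_i = \gr(\rho_i^\ast)$, la propri\'et\'e caract\'eristique $r_{(p_i, \dots, p_i), n_i p_i}(\langle \Delta_i \rangle) = \rho_i^\ast \otimes \nu_{\rho_i^\ast} \rho_i^\ast \otimes \dots \otimes \nu_{\rho_i^\ast}^{n_i - 1} \rho_i^\ast$, la transitivit\'e du foncteur de Jacquet et le fait que les cuspidales $\nu_{\rho_i^\ast}^k \rho_i^\ast$ sont deux \`a deux distinctes entra\^inent que $r_{(b_{i1}, b_{i2}), n_i p_i}(\langle \Delta_i \rangle) = 0$ sauf si $b_{i1} = a p_i$ pour un entier $0 \leq a \leq n_i$, auquel cas il est irr\'eductible et isomorphe \`a $\langle [\rho_i^\ast, \nu_{\rho_i^\ast}^{a-1} \rho_i^\ast] \rangle \otimes \langle [\nu_{\rho_i^\ast}^{a} \rho_i^\ast, \nu_{\rho_i^\ast}^{n_i - 1} \rho_i^\ast] \rangle$.

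Il s'agit ensuite de v\'erifier les deux conditions de support du Corollaire \ref{mmm1}. Si elles \'etaient simultan\'ement en d\'efaut pour un indice $i$ et un entier $a$, les supports des sous-segments initial et terminal de $\Delta_i$ seraient respectivement contenus dans $\bigcup_{j<i} \supp(\langle \Delta_j \rangle)$ et $\bigcup_{j>i} \supp(\langle \Delta_j \rangle)$. Un choix judicieux de $a$ (assurant que l'\'el\'ement $\nu_{\rho_i^\ast}^{a} \rho_i^\ast$ soit l'extr\'emit\'e gauche d'un $\Delta_{j'}$ avec $j' > i$) produit alors la conclusion que $\Delta_i$ pr\'ec\`ede $\Delta_{j'}$, ce qui contredit l'hypoth\`ese. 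Le Corollaire \ref{mmm1} fournit ainsi un unique sous-module irr\'eductible de $\langle \Delta_1 \rangle \times \dots \times \langle \Delta_r \rangle$, de multiplicit\'e $1$, que nous notons $\langle \Delta_1, \dots, \Delta_r \rangle$ ; ceci \'etablit (1).

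Pour (2), on peut soit raisonner par r\'ecurrence sur $\sum_i |\Delta_i|$ en extrayant un segment extr\'emal au moyen des foncteurs $\Jac_\rho$ de la Section \ref{bloques}, soit d\'eduire l'assertion du point (2) du Th\'eor\`eme \ref{Z} via l'involution de Zelevinsky. Pour (3), \'etant donn\'ee $\pi \in \Irr(G_n)$, le Th\'eor\`eme \ref{Z}(3) fournit $\pi^t = \langle \Delta_1, \dots, \Delta_r \rangle^t$ pour une famille convenable de segments ; puisque l'involution de Zelevinsky \'echange $\langle \Delta \rangle$ et $\langle \Delta \rangle^t$ (\textit{cf.} preuve du Th\'eor\`eme \ref{mover}), une v\'erification de compatibilit\'e avec la param\'etrisation fournie par (1) donne $\pi = \langle \Delta_1, \dots, \Delta_r \rangle$. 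L'obstacle principal sera combinatoire, au sein de la preuve de (1) : il s'agit d'exhiber, dans chaque configuration o\`u les deux inclusions de support \'echouent simultan\'ement, un indice $j' > i$ et un \'el\'ement de $\Delta_i$ attestant que $\Delta_i$ pr\'ec\`ede $\Delta_{j'}$, avec un soin particulier aux cas limites $a \in \{0, n_i\}$ o\`u l'un des deux facteurs de Jacquet est trivial.
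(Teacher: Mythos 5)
Votre preuve de (1) repose sur une application directe du corollaire \ref{mmm1} aux facteurs $\rho_i=\left<\Delta_i\right>$, et c'est pr\'ecis\'ement l\`a que se trouve une lacune r\'eelle. Les hypoth\`eses de \ref{mmm1} portent sur \emph{toutes} les partitions $\beta_i=(b_{i1},b_{i2})$ de $n_i$, y compris les partitions d\'eg\'en\'er\'ees $(0,n_i)$ et $(n_i,0)$ (le papier autorise les parts nulles, et la preuve de la proposition \ref{cons} en a besoin : une matrice $b$ non diagonale peut concentrer toute une ligne dans une seule colonne hors-diagonale ; d'ailleurs, si on excluait ces partitions, l'hypoth\`ese deviendrait vide pour des cuspidales et la conclusion de \ref{cons} serait fausse pour $\rho\times\rho$). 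Or, pour $\beta_i=(0,n_i)$ le premier facteur est la repr\'esentation de $G_0$, de support vide, donc contenu dans $\bigcup_{j<i}\supp(\rho_j)$ : la condition (1) \'echoue automatiquement et il faut donc que $\Delta_i\nsubseteq\bigcup_{j>i}\Delta_j$ ; sym\'etriquement, la partition $(n_i,0)$ impose $\Delta_i\nsubseteq\bigcup_{j<i}\Delta_j$. Ces conditions sont strictement plus fortes que l'hypoth\`ese du th\'eor\`eme (pour $i<j$, $\Delta_i$ ne pr\'ec\`ede pas $\Delta_j$) : elles \'echouent d\'ej\`a pour deux segments \'egaux ou pour des segments embo\^it\'es, cas parfaitement admis ici, et ce pour n'importe quel ordre rang\'e. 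Dans ces configurations, aucun choix de $a$ ne produit la relation voulue (\`a savoir que $\Delta_i$ pr\'ec\`ede un $\Delta_{j'}$ avec $j'>i$) : il n'y a pas de contradiction \`a extraire, et le soin que vous annoncez pour les cas limites $a\in\{0,n_i\}$ ne peut pas y rem\'edier.

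La difficult\'e n'est pas seulement technique : pour des segments \'egaux, l'\'enonc\'e m\^eme dont vous avez besoin pour invoquer le lemme \ref{mult1} est faux. Si $\Delta_1=\Delta_2=\Delta$, le lemme g\'eom\'etrique donne $\left<\Delta\right>\otimes\left<\Delta\right>$ avec multiplicit\'e $2$ dans $r_{\beta,n}\left(\left<\Delta\right>\times\left<\Delta\right>\right)$ (les matrices diagonale et antidiagonale contribuent toutes les deux), bien que l'induite soit irr\'eductible par le th\'eor\`eme \ref{mover}. Votre strat\'egie exige donc une r\'eduction pr\'ealable --- par exemple regrouper en un seul facteur irr\'eductible les segments \'egaux (ou, plus g\'en\'eralement, des paquets de segments deux \`a deux non li\'es) gr\^ace \`a \ref{mover}, ou bien proc\'eder comme dans la preuve du th\'eor\`eme \ref{l}, o\`u \ref{mmm1} n'est appliqu\'e qu'\`a deux facteurs soigneusement choisis via la maximalit\'e de $l_{\pi}^{\supp(\rho)}$ --- et c'est pr\'ecis\'ement ce travail que l'article ne refait pas : la preuve y est renvoy\'ee \`a \cite[2.3.6]{Mi2}. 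Enfin, votre traitement de (2) et (3) via l'involution de Zelevinsky est circulaire dans la logique de cet article, puisque la d\'efinition \ref{zze} de cette involution pr\'esuppose les deux param\'etrisations ; et m\^eme en passant par l'involution d'Aubert (utilis\'ee dans la preuve de \ref{mover}), on n'obtient que des identit\'es dans $\mathcal{R}$, ce qui ne donne ni l'unicit\'e du sous-module ni sa multiplicit\'e $1$ dans (1) sans argument suppl\'ementaire.
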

\begin{proof}
\textit{Cf.} \cite[2.3.6]{Mi2}
\end{proof}
\begin{remark}\label{prac}
Si $\left\{ \Delta_1 ,\dots,\Delta_r \right\}$ est un multi-segment rang\'e, avec $\Delta_1 = \left\{ \rho, \nu_{\rho}\rho,\dots,\nu^{n-1}_{\rho}\rho\right\}$, alors 
$$\overline{\Jac}_{\rho}\left( \left<\Delta_1,\dots ,\Delta_r \right>^t\right) \neq \emptyset.$$
En effet, $ \left<\Delta_1,\dots ,\Delta_r \right>^t$ \'etant un quotient de $ \left<\Delta_1 \right>^t\times \dots \times \left<\Delta_r \right>^t$ , $ \left<\Delta_1 \right>^t\otimes \dots \otimes \left<\Delta_r \right>^t \in \overline{r}\left( \left<\Delta_1,\dots ,\Delta_r \right>^t \right)$, avec $\overline{r}$ le foncteur de Jacquet associ\'e au parabolique appropri\'e.
\end{remark}
\begin{defi}\label{zze}
L'automorphisme d'alg\`ebre de $\mathcal{R}$ qui envoie la repr\'esentation $ \left<\Delta_1,\dots ,\Delta_r \right>$ vers  $ \left<\Delta_1,\dots ,\Delta_r \right>^t$ est appel\'ee l'involution de Zelevinsky.
\end{defi}
\begin{remark}\label{rectas} Soit $X$ un sous-ensemble maximal de $\mathcal{C}$ v\'erifiant la propri\'et\'e suivante: si $\rho_1,\rho_2 \in X$, $n\in \mathbb{Z}$ et $\rho_1= \nu^{n} \rho_2$ , alors $n=0$. Notons $\mathcal{R}(\rho)$ le sous-ensemble de $\mathcal{R}$ form\'e des repr\'esentations dont le support est inclus dans l'ensemble $\left\{\rho \nu_{\rho}^{t} : t\in \mathbb{Z} \right\}$. Alors \cite[\textsection 3]{Tadic}
$$ \mathcal{R}=\bigotimes_{\rho \in X}\mathcal{R}(\rho).$$
\end{remark}

\section{Unicit\'e du sous-module}\label{irr}
Le but de cette section est de donner des conditions suffisantes sur deux repr\'esentations irr\'eductibles $\pi$ et $\rho$ pour que leur induite parabolique $\pi \times \rho$ n'ait qu'un seul sous-module irr\'eductible. C'est un probl\`eme tr\`es int\'eressant: si c'\'etait toujours le cas, \textit{i.e.} si l'induite parabolique du produit tensoriel de deux repr\'esentations irr\'eductibles avait toujours un seul sous-module irr\'eductible alors cela montrerait 
la conjecture (U0) de \cite{Tadic}, \textit{i.e.} le fait que l'induite $\pi \times \rho$ de deux repr\'esentations irr\'eductibles unitaires de ${\rm GL}_i(D)$ et ${\rm GL}_j(D)$ respectivement reste toujours irr\'eductible. Cette conjecture vient d'\^etre prouv\'ee par V. S\'echerre (\textit{cf.} \cite{Se4}).


\begin{theorem}\label{l}
Soient $\Delta_1, \dots , \Delta_r$ des segments \underline{non li\'es} v\'erifiant la condition suivante
\begin{equation}\label{C} \text{Si $i \neq j$, alors ou bien $\Delta_i = \Delta_j$ ou bien $\Delta_i \cap \Delta_j =\emptyset $}.
\end{equation}
Soit $\rho=\left< \Delta_1, \dots , \Delta_r \right>^t$ (resp. $\left< \Delta_1, \dots , \Delta_r \right>$) et $\pi \in \Irr(G_n)$. Alors $\pi \times \rho$ a un seul sous-module irr\'eductible et il appara\^it avec multiplicit\'e $1$ dans $\JH\left( \pi \times \rho\right)$.
\end{theorem}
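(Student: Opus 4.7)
The plan is to invoke Lemma \ref{mult1} and thereby reduce the uniqueness claim to a single multiplicity computation: it will suffice to show that $\pi \otimes \rho$ appears exactly once in $\JH \left( r_{(n, N), n+N}(\pi \times \rho) \right)$, where $N = \gr \rho$. As a preliminary step I note that, since $\Delta_1, \dots, \Delta_r$ are pairwise non li\'es, Th\'eor\`eme \ref{mover} gives
$\rho = \left<\Delta_1\right>^t \times \cdots \times \left<\Delta_r\right>^t$
as an induite irr\'eductible (and likewise in the Zelevinsky case with $\left<\Delta_i\right>$); this factorisation will be used to control the Jacquet module of $\rho$ via the geometric lemma.

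I would then apply the lemme g\'eom\'etrique of Section \ref{lemageo} to $r_{(n, N), n+N}(\pi \times \rho)$. Its composition factors take the form $(\sigma_1 \times \tau_1) \otimes (\sigma_2 \times \tau_2)$, indexed by an integer $0 \leq k \leq \min(n, N)$ together with irreducible subquotients $\sigma_1 \otimes \sigma_2 \in \JH(r_{(n-k, k), n}(\pi))$ and $\tau_1 \otimes \tau_2 \in \JH(r_{(k, N-k), N}(\rho))$. The diagonal case $k = 0$ contributes $\pi \otimes \rho$ with multiplicity exactly $1$. Any further occurrence would demand $k > 0$ together with $\pi \in \JH(\sigma_1 \times \tau_1)$ and $\rho \in \JH(\sigma_2 \times \tau_2)$; matching cuspidal supports on each factor then forces $\supp \sigma_2 = \supp \tau_1$, a multi-ensemble non vide.

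The main obstacle is the combinatorial step which rules out every contribution with $k>0$. Decomposing $r_{(k, N-k), N}(\rho)$ by applying the lemme g\'eom\'etrique to $\left<\Delta_1\right>^t \times \cdots \times \left<\Delta_r\right>^t$, the essential square-integrability of each $\left<\Delta_i\right>^t$ forces its two-part Jacquet to be one of $(\mathbf{1}, \left<\Delta_i\right>^t)$, $(\left<\Delta_i\right>^t, \mathbf{1})$, or $(\left<\Delta_i^{\mathrm{top}}\right>^t, \left<\Delta_i^{\mathrm{bot}}\right>^t)$ for a proper split of $\Delta_i$. The hypothesis (C)---that distinct $\Delta_i$ are equal or have disjoint supports---together with pairwise non-linkedness should then prevent $\rho$ from being a composition factor of $\sigma_2 \times \tau_2$ whenever $\tau_1 \neq \mathbf{1}$: a ``top'' piece $\left<\Delta_i^{\mathrm{top}}\right>^t$ sitting inside $\tau_1$ would have to be matched by a corresponding factor drawn from $\sigma_2$ (a Jacquet constituent of $\pi$), and the combination of disjoint/equal supports and absence of linkage obstructs such a recombination across $\pi$'s support and $\rho$'s. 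Making this obstruction precise, using the classification of Th\'eor\`eme \ref{Z} and the explicit description of the $\left<\Delta\right>^t$, is the heart of the argument; once established, Lemma \ref{mult1} delivers the unique irreducible submodule with multiplicity one.
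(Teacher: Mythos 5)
Your reduction has a genuine gap: the statement you defer to the ``heart of the argument'' --- that $\pi \otimes \rho$ occurs with multiplicity $1$ in $\JH\left( r_{(n,N),\,n+N}(\pi \times \rho)\right)$ --- is simply false in general under the hypotheses of the theorem, so no combinatorial analysis of the terms with $k>0$ can eliminate them. The hypotheses (non-liaison and la condition \eqref{C}) constrain only $\rho$, not $\pi$, and the troublesome terms appear precisely when $\supp(\pi)$ meets $\supp(\rho)$. The simplest counterexample is $\pi=\rho$ cuspidal (one segment $\Delta_1=\{\rho\}$, all hypotheses satisfied): the lemme g\'eom\'etrique gives $\JH\left(r_{(N,N),2N}(\rho\times\rho)\right)=2\,(\rho\otimes\rho)$, multiplicity $2$, although $\rho\times\rho$ is irreducible and the theorem holds trivially. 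More generally $\pi=\rho\times\rho$ gives multiplicity $3$. So Lemme \ref{mult1} cannot be applied to the pair $(\pi,\rho)$ directly; it gives a sufficient criterion that this pair does not satisfy. (Your preliminary remark that $\rho=\left<\Delta_1\right>^t\times\dots\times\left<\Delta_r\right>^t$ is correct, but it does not repair this.)

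The paper's proof circumvents exactly this obstruction by changing the pair before invoking the geometric-lemma criterion. One introduces $l=l_{\pi}^{\supp(\rho)}$, the maximal $i$ such that $\pi\hookrightarrow\tau_1\times\tau_2$ with $\tau_2\in\Irr(G_i)$ and $\supp(\tau_2)\subset\supp(\rho)$. For such a maximal embedding, condition \eqref{C} together with the non-liaison of the $\Delta_i$ guarantees that every segment of $\tau_2$ is unlinked with every $\Delta_j$, so $\tau=\tau_2\times\rho$ is irreducible; and the maximality of $l$ guarantees that no irreducible subquotient $\rho_1\otimes\rho_2$ of a proper Jacquet module of $\tau_1$ has $\supp(\rho_2)\subset\supp(\tau)$. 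These are precisely the hypotheses of Corollaire \ref{mmm1} for the pair $(\tau_1,\tau)$: hence $\tau_1\otimes\tau$ has multiplicity $1$ in the relevant Jacquet module, so $\tau_1\times\tau$ has a unique irreducible submodule occurring with multiplicity $1$, and $\pi\times\rho$, being a nonzero submodule of $\tau_1\times\tau$, inherits both conclusions. In short: the multiplicity-one computation must be performed for $(\tau_1,\tau_2\times\rho)$, after absorbing the whole $\supp(\rho)$-part of $\pi$ into the cosocle factor, not for $(\pi,\rho)$ itself; that absorption step is the missing idea in your proposal.
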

\begin{defi}
Soient $\pi \in \Irr(G_n)$ et $\rho \in \Irr (G_p)$, $p\leq n$. On d\'efinit l'entier $l_{\pi}^{\supp(\rho)}$ par 
\begin{eqnarray*}
l_{\pi}^{\supp(\rho)}=\max&&\hspace{-.7cm}\big\{ i : \exists \tau_1 \in \Irr(G_{n-i}), \tau_2 \in \Irr(G_{i}) \\ && \hspace{-1cm}\text{ avec } \Hom_{G_n}(\pi, \tau_1 \times \tau_2) \neq 0, \text{et } \supp(\tau_2) \subset \supp (\rho) \big\}.
\end{eqnarray*}
\end{defi}
\begin{remark}
On a aussi 
\begin{eqnarray*}
l_{\pi}^{\supp(\rho)}=\max&&\hspace{-.7cm} \big\{ i : \exists \tau'_1 \in \Irr(G_{n-i}), \tau'_2 \in \Irr(G_{i}) \\ &&\hspace{-1cm} \text{ avec }  \tau'_1 \otimes \tau'_2 \in \JH \left( r_{(n-i,i), n} \left( \pi \right) \right), \text{et } \supp(\tau'_2) \subset \supp (\rho) \big\}.
\end{eqnarray*}
En effet, si $\tau'_1 \in \Irr (G_{n-i} )$ et $\tau'_2 \in \Irr (G_{i} )$, avec
$$\supp \left( \tau'_2 \right) \subset \supp \left( \rho \right),$$
d'apr\`es la proposition \ref{JA}, il existerait  $\tau_1 \in \Irr (G_{n-i} )$ et $\tau_2 \in \Irr (G_{i} )$, avec
$$\supp \left( \tau_2 \right)=\supp \left( \tau'_2 \right)  \subset \supp \left( \rho \right)$$
et $\Hom_{G_n}(\pi, \tau_1 \times \tau_2) \neq 0$.

Cette remarque nous permet de d\'efinir l'entier $l_{\pi}^{\supp(\rho)}$ pour $\pi$ une repr\'esentation de longueur finie (non n\'ecessairement irr\'eductible).
\end{remark}
\begin{proof}[D\'emonstration du du th\'eor\`eme \ref{l}]
Posons pour simplifier $l=l_{\pi}^{\supp(\rho)}$.
Soient $\tau_1 \in \Irr(G_{n-l}), \tau_2 \in \Irr(G_{l}),$ avec $\Hom_{G_n}(\pi, \tau_1 \times \tau_2) \neq 0$, et $\supp(\tau_2) \subset \supp (\rho) $. Alors,
\begin{enumerate}
\item La condition \eqref{C} nous dit que les segments de $\rho$ ne sont pas li\'es avec ceux de $\tau_2$ dans la param\'etrisation \`a la Langlands (resp. \`a la Zelevinsky). La repr\'esentation $\tau= \tau_2 \times \rho$ est donc irr\'eductible d'apr\`es \cite[Prop. 2.2]{Tadic} (resp. \cite[2.3.8]{Mi2}).
\item Par maximalit\'e de $l_{\pi}^{\supp(\rho)}$ et la remarque pr\'ec\'edente, pour tout $i\geq 1$ et tous $$\rho_1 \otimes \rho_2 \in \JH \left( r_{(n-l-i,i), n-l} \left( \tau_1 \right) \right),$$ $\rho_1 \in \Irr (G_{n-l-i} )$ et $\rho_2 \in \Irr (G_{i} )$, on a 
$$\supp \left( \rho_2 \right) \nsubseteq \supp \left( \tau \right).$$
\end{enumerate}

Le th\'eor\`eme d\'ecoule du fait que $\tau$ et $\tau_1$ sont alors deux repr\'esentations irr\'eductibles qui satisfont aux conditions du corollaire \ref{mmm1}. Leur induite n'a alors qu'un seul sous-module irr\'eductible et donc, $\pi \times \rho$, sous-module non nul de $\tau_1 \times \tau$, n'a, lui aussi, qu'un seul sous-module irr\'eductible. 
\end{proof}



\begin{remark}\label{mas1}
Si $V$ est l'unique sous-module irr\'eductible de $\pi \times \rho$, alors $l_{V}^{\supp(\rho)}=l_{\pi}^{\supp(\rho)}+t$, o\`u $t=\gr(\rho)$. En effet, l'in\'egalit\'e $l_{V}^{\supp(\rho)} \geq l_{\pi}^{\supp(\rho)}+t$ est claire. Pour l'autre, on remarque que, puisque le foncteur de Jacquet est exact, pour tout $\rho' \in \Irr$, si $\overline{\Jac}_{\rho'}(V) \neq 0$, alors $\overline{\Jac}_{\rho'}(\pi \times \rho) \neq 0$ et donc $l_{V}^{\supp(\rho)} \leq l_{\pi \times \rho}^{\supp(\rho)}$. Ce dernier entier, par le lemme g\'eom\'etrique, vaut $l_{\pi}^{\supp(\rho)}+t$.
\end{remark}

En passant \`a la contragr\'ediente on trouve:
\begin{corollary}\label{cociente} 
Avec les m\^emes hypoth\`eses, $\pi \times \rho$ a un unique quotient irr\'eductible et il appara\^it avec multiplicit\'e $1$.
\end{corollary}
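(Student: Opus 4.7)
Le plan est d'appliquer le Th\'eor\`eme \ref{l} \`a $\widetilde{\pi}$ et $\widetilde{\rho}$, puis de passer \`a la contragr\'ediente pour \'echanger sous-modules et quotients. Concr\`etement, je commencerais par v\'erifier que $\widetilde{\rho}$ satisfait encore les hypoth\`eses de l'\'enonc\'e. On sait que la contragr\'ediente d'une repr\'esentation de la forme $\left<\Delta \right>^t$ (resp. $\left<\Delta\right>$) associ\'ee \`a un segment $\Delta$ est encore de cette forme, associ\'ee au segment $\widetilde{\Delta}$ obtenu en rempla\c{c}ant la repr\'esentation cuspidale $\rho_0$ par $\widetilde{\rho_0}$ (et en renversant l'ordre de mani\`ere naturelle). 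Ainsi $\widetilde{\rho}= \left<\widetilde{\Delta}_1, \dots, \widetilde{\Delta}_r\right>^t$ (resp. $\left<\widetilde{\Delta}_1, \dots, \widetilde{\Delta}_r\right>$), et comme le passage \`a la contragr\'ediente pr\'eserve les relations de liaison, d'\'egalit\'e et d'intersection entre segments, les $\widetilde{\Delta}_i$ sont encore non li\'es et v\'erifient la condition \eqref{C}.

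Le Th\'eor\`eme \ref{l} appliqu\'e \`a $\widetilde{\pi}$ et $\widetilde{\rho}$ affirme alors que $\widetilde{\pi} \times \widetilde{\rho}$ admet un unique sous-module irr\'eductible $W$, apparaissant avec multiplicit\'e $1$ dans $\JH\left( \widetilde{\pi} \times \widetilde{\rho}\right)$. J'utiliserais ensuite la compatibilit\'e de l'induction parabolique normalis\'ee avec la contragr\'ediente sur ${\rm GL}_n(D)$, \`a savoir $\widetilde{\pi \times \rho} \simeq \widetilde{\pi} \times \widetilde{\rho}$. Comme le foncteur contragr\'edient est exact, involutif sur la cat\'egorie des repr\'esentations admissibles de longueur finie, et \'echange sous-modules et quotients, $\widetilde{W}$ est l'unique quotient irr\'eductible de $\pi \times \rho$, et sa multiplicit\'e dans $\JH(\pi \times \rho)$ reste \'egale \`a $1$ puisque $\JH$ commute au passage \`a la contragr\'ediente (les multiplicit\'es sont pr\'eserv\'ees).

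Il n'y a pas d'obstacle s\'erieux dans cette approche; la seule v\'erification \`a ne pas oublier est la pr\'eservation de la condition \eqref{C} et de la non-liaison par contragr\'ediente, qui d\'ecoule imm\'ediatement de la description explicite de $\widetilde{\left<\Delta\right>}$ et $\widetilde{\left<\Delta\right>^t}$ en termes de segments.
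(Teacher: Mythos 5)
Ta preuve est correcte et suit exactement la m\^eme d\'emarche que l'article, qui d\'eduit pr\'ecis\'ement ce corollaire du Th\'eor\`eme \ref{l} \og en passant \`a la contragr\'ediente \fg; tu ne fais qu'expliciter les v\'erifications (stabilit\'e des hypoth\`eses sur les segments, compatibilit\'e $\widetilde{\pi \times \rho} \simeq \widetilde{\pi} \times \widetilde{\rho}$, pr\'eservation des multiplicit\'es), ce qui est exactement ce que l'auteur sous-entend.
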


De la m\^eme fa\c{c}on, en utilisant le foncteur de Jacquet oppos\'e et en red\'efinissant de fa\c{c}on appropri\'ee l'entier $l$, on montre le th\'eor\`eme ci-dessous.
\begin{theorem}\label{ttt}
Avec les hypoth\`eses de \ref{l}, $\rho \times \pi$ a un seul sous-module (resp. quotient) irr\'eductible et il appara\^it avec multiplicit\'e $1$.
\end{theorem}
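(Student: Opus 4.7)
The plan is to mirror the proof of Theorem \ref{l} with the roles of left and right exchanged, that is, by substituting Casselman's reciprocity \eqref{frobcas} for the Frobenius reciprocity \eqref{frob} and the opposite Jacquet functor $\overline{r}$ for $r$. Concretely, redefine
\begin{equation*}
\overline{l}_{\pi}^{\supp(\rho)} = \max\bigl\{\, i : \exists\, \tau_1 \in \Irr(G_i),\ \tau_2 \in \Irr(G_{n-i}),\ \Hom_{G_n}(\tau_1 \times \tau_2,\pi) \neq 0,\ \supp(\tau_1) \subset \supp(\rho)\,\bigr\}.
\end{equation*}
By the analog of Proposition \ref{JA} for $\overline{r}$ and $\overline{\Jac}$ (using \eqref{frobcas} in place of \eqref{frob}), this equals the maximum $i$ such that there is some $\sigma_1 \otimes \sigma_2 \in \JH(\overline{r}_{(i,n-i),n}(\pi))$ with $\supp(\sigma_1) \subset \supp(\rho)$. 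Pick $\tau_1,\tau_2$ realizing $\overline{l}$, so that $\tau_1 \times \tau_2 \twoheadrightarrow \pi$. Exactly as in the proof of Theorem \ref{l}, condition \eqref{C} together with $\supp(\tau_1) \subset \supp(\rho)$ forces the segments of $\tau_1$ and of $\rho$ to be pairwise unlinked, so $\tau := \rho \times \tau_1$ is irreducible by Theorem \ref{mover}. Exactness of induction then yields a surjection $\tau \times \tau_2 \twoheadrightarrow \rho \times \pi$.

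One then applies the mirror of Corollary \ref{mmm1} --- formulated with $\overline{r}$ in place of $r$ and giving a unique irreducible quotient (instead of a unique submodule) via the Casselman-based analog of Lemma \ref{mult1} --- to the pair $(\tau,\tau_2)$. The conditions to verify reduce, by the same maximality-and-transitivity argument as in Theorem \ref{l}, to checking that for every $i \geq 1$ and every $\rho_1 \otimes \rho_2 \in \JH(\overline{r}_{(i,n-\overline{l}-i),n-\overline{l}}(\tau_2))$ one has $\supp(\rho_1) \not\subset \supp(\tau) = \supp(\rho)$. Indeed, otherwise transitivity of $\overline{r}$ places $\tau_1 \otimes \rho_1 \otimes \rho_2$ in $\JH(\overline{r}_{(\overline{l},i,n-\overline{l}-i)}(\pi))$, and then the $\overline{r}$-analog of Proposition \ref{JA} produces an irreducible $W$ of grade $\overline{l}+i > \overline{l}$ with $\supp(W) \subset \supp(\rho)$ and $W \times V \twoheadrightarrow \pi$ for some $V$, contradicting the maximality of $\overline{l}$. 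Hence $\tau \times \tau_2$ has a unique irreducible quotient of multiplicity $1$, and since $\tau \times \tau_2 \twoheadrightarrow \rho \times \pi$, any irreducible quotient of $\rho \times \pi$ factors through this surjection and so coincides with it; this gives the unique-quotient part of the statement (with multiplicity $1$).

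For the unique-submodule part we use duality: taking the contragredient preserves both the non-linkedness of the $\Delta_i$ and the disjoint-or-equal condition \eqref{C}, so $\widetilde{\rho}$ still satisfies the hypotheses. Applying the unique-quotient statement just proved to $(\widetilde{\rho},\widetilde{\pi})$ shows that $\widetilde{\rho} \times \widetilde{\pi}$ has a unique irreducible quotient with multiplicity $1$; passing to contragredients and using $\widetilde{\rho \times \pi} \simeq \widetilde{\rho} \times \widetilde{\pi}$ yields the analogous assertion for the unique irreducible submodule of $\rho \times \pi$. The main technical obstacle is the precise formulation and verification of the $\overline{r}$-versions of Proposition \ref{cons}, Lemma \ref{mult1}, and Corollary \ref{mmm1} giving unique quotients; this is a routine adaptation, since Lemma \ref{mult1} already anticipates both the $r$ and $\overline{r}$ versions and the geometric lemma has a direct analog for $\overline{r}$.
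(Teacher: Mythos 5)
Your proposal is correct and follows essentially the route the paper itself indicates for Theorem \ref{ttt}: rerun the proof of Theorem \ref{l} with the opposite Jacquet functor, Casselman reciprocity \eqref{frobcas} and a suitably left-handed redefinition of $l$ to get the unique irreducible quotient of $\rho\times\pi$, then pass to contragredients for the submodule half (the same trick the paper uses for Corollary \ref{cociente}). The only small slip is a citation: the irreducibility of $\rho\times\tau_1$ should be justified, as in the proof of Theorem \ref{l}, by \cite[Prop.~2.2]{Tadic} (resp. \cite[2.3.8]{Mi2}) rather than by Theorem \ref{mover}, which only concerns products of segment representations.
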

On consid\`ere pourtant que les conditions ne sont pas n\'ecessaires et on se pose la question suivante: 
\begin{question} 
Est-ce que l'induite parabolique du produit tensoriel de deux repr\'esentations irr\'eductibles a toujours un seul sous-module irr\'eductible?
\end{question}
Remarquons que l'induite du produit tensoriel de \textit{trois} repr\'esentations irr\'eductibles peut avoir deux sous-modules irr\'eductibles (Par exemple, \textit{cf.} \cite[11.2]{Z1}, la repr\'esentation $1 \times |\,\, | \times 1$ de ${\rm GL}_3(F)$).

\section{Calcul explicite}\label{3}
Dans cette section on se propose de calculer les param\`etres de Langlands de l'unique sous-module irr\'eductible de $\pi \times \rho $. Le premier lemme est une transcription du lemme II.9. de \cite{MW} dans nos notations.

Nous introduisons quelques autres notations dont nous aurons besoin. On fixe une repr\'esentation cuspidale $\alpha$ de ${\rm GL}_n(D)$. D'apr\`es la remarque \ref{rectas}, il suffit de calculer les param\`etres de Langlands de l'unique sous-module de $\pi \times \alpha $ pour $\pi$ une repr\'esentation irr\'eductible dans  $\mathcal{R}(\alpha)$. Ainsi on peut identifier un segment $\Delta=\left\{\nu_{\alpha}^t \alpha, \nu_{\alpha}^{t+1}\alpha,\dots,\nu^{r}_{\alpha}\alpha\right\}$ \`a une suite $\left\{ t, t+1, \dots, r\right\}$. Dor\'enavant, s'il n'y pas d'ambigu\"it\'e, quand on a fix\'e une repr\'esentation cuspidale, on utilisera indiff\'eremment les deux notations. On notera aussi $b\left( \Delta \right) =t$ (ou $b\left( \Delta \right) =\nu_{\alpha}^t \alpha$), $e\left( \Delta \right) =r$ (ou $e\left( \Delta \right) =\nu_{\alpha}^r \alpha$) et $^+\Delta$ (resp. $^-\Delta$) le segment $^+\Delta=\left\{\nu_{\alpha}^{t-1} \alpha, \dots,\nu^{r}_{\alpha}\alpha\right\}$ (resp
 . $^-\Delta=\left\{\nu_{\alpha}^{t+1} \alpha, \dots,\nu^{r}_{\alpha}\alpha\right\}$).

On notera aussi $\left< \emptyset \right>^t $ la repr\'esentation triviale de $G_0$ et pour toute repr\'esentation $V$, $\widehat{V}=\left< \emptyset \right>^t $ (ce qui signifie, en pratique, qu'on \^ote la pr\'esence de $V$ dans la notation).
Le lemme ci-dessous est une adaptation du lemme II.9. de \cite{MW} dans nos notations, la preuve \'etant la m\^eme: 
 \begin{lemma}\label{pocos}
Soient c un entier, $\rho =\nu ^{c}\alpha $ la repr\'{e}sentation associ\'{e}e de ${\rm GL}\left( n,D\right) ,$ et $\pi $ une repr\'{e}sentation irr\'{e}ductible de ${\rm GL}\left( N-n,D\right) $ param\'{e}tr\'{e}e, \`a la Langlands,
par le multisegment rang\'{e} $m=\left\{ \Delta_{1},\ldots,
\Delta_{r}\right\} $. Alors l'ensemble des repr\'{e}sentations irr\'{e}ductibles de ${\rm GL}\left( N,D\right) $
qui sont isomorphes \`a des sous-repr\'{e}sentations de $ 
\pi \times \rho$ (resp. \`a des quotients de $\rho \times \pi$) est inclus dans l'ensemble des repr\'{e}sentations irr\'{e}ductibles suivantes:

$\left\langle \left\{ c\right\} ,\Delta _{1},\ldots ,\Delta
_{r}\right\rangle^t $

$\left\langle \Delta _{1},\ldots ,^{+}\Delta _{s},\ldots ,\Delta
_{r}\right\rangle^t $ o\`u $\Delta _{s}$ est un segment de $m$ d\'{e}butant \`a $%
c+1,$
et o\`u $s$ parcourt les entiers v\'erifiant la propri\'{e}t\'{e} suivante: notons $\Delta _{j\left( 1\right)
},\ldots ,\Delta _{j\left( t_\pi \right) }$ les segments de $m$ (dans l'ordre d\'{e}croissant) commen\c{c}ant par $c$; pour tout entier $v$ compris
entre $1$ et $r,$ on d\'{e}finit, inductivement sur $v,$ l'entier $i\left( v\right) $ comme \'{e}tant soit le plus grand entier
diff\'{e}rent de $i\left( 1\right) ,\ldots ,i\left( v-1\right) $, tel que $%
\Delta _{i\left( v\right) }$ commence par $c+1$ et soit pr\'{e}c\'{e}d\'{e}
par $\Delta _{j\left( v\right) }$, soit $i(v)=r+1$ si un tel entier n'existe pas; alors $s$ ne doit pas \^{e}tre l'un des entiers $i(v)$ qui viennent d'\^{e}tre d\'{e}finis.
\end{lemma}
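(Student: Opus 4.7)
The plan is to transplant the proof of \cite[Lemme II.9]{MW} into our setting; all its ingredients --- Frobenius reciprocity \eqref{frob} and \eqref{frobcas}, the geometric lemma of Section \ref{lemageo}, the block decomposition of Section \ref{bloques}, and the Langlands-type classification of Theorem \ref{Z} --- are available.

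I would first constrain the shape of the multisegment. If $V = \langle m'\rangle^t$ embeds in $\pi \times \rho$, then $\supp(V) = \supp(\pi) \cup \{\nu^c\alpha\}$, and since all the cuspidal supports involved live in a single line $\{\nu_\alpha^t\alpha : t \in \mathbb{Z}\}$ (Remark \ref{rectas}), the multisegment $m'$ must be obtained from $m$ either by adjoining the singleton segment $\{c\}$ or by replacing some $\Delta_s$ beginning at $c+1$ by ${}^+\Delta_s$: no other modification of $m$ adds exactly one copy of $\nu^c\alpha$ to the cuspidal support.

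Next, Frobenius reciprocity \eqref{frob} turns the embedding $V \hookrightarrow \pi \times \rho$ into the condition $\pi \otimes \rho \in \JH\big(r_{(N-n,n),N}(V)\big)$, i.e.\ $\pi \in \JH(\Jac_\rho(V))$. For each candidate $m'$ it then remains to decide whether $\pi$ actually appears in $\Jac_\rho(\langle m'\rangle^t)$. This is governed by the M\oe glin--Waldspurger extraction algorithm, which, via the geometric lemma and the characterization of $\langle\Delta\rangle^t$ through its Jacquet module (Section \ref{sectadic}), prescribes exactly how the $\nu^c\alpha$'s sitting at the left ends of segments of $m'$ may be stripped off. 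The conclusion is the bookkeeping of the statement: an extension ${}^+\Delta_s$ yields a valid subrepresentation precisely when $s$ is not already absorbed by the inductive pairing $j(v) \mapsto i(v)$, where $j(v)$ enumerates in decreasing order the segments of $m$ beginning at $c$.

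The dual statement for quotients of $\rho \times \pi$ follows by running the same argument with \eqref{frobcas}, $\overline{r}$ and $\overline{\Jac}$ in place of \eqref{frob}, $r$ and $\Jac$; the combinatorics of the reversed extraction produces the same list of multisegments. The main obstacle is the combinatorial analysis in this last step: proving that the forbidden set of indices is exactly $\{i(1),\ldots,i(t_\pi)\}$, neither larger nor smaller. This is handled by an induction on $t_\pi$ following \cite{MW}, tracking how each pairing consumes precisely one candidate index $s$ and leaves the others admissible.
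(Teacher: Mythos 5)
Your overall plan coincides with the paper's: the paper gives no argument for Lemma \ref{pocos} beyond declaring it an adaptation of \cite[Lemme II.9]{MW}, ``la preuve \'etant la m\^eme'', and you likewise propose to transplant that proof using \eqref{frob}, \eqref{frobcas}, the geometric lemma of Section \ref{lemageo} and the classification of Section \ref{sectadic}. To that extent the route is the same.

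However, your summary of how the transplanted argument runs contains a step that would fail: the claim that cuspidal-support counting already forces the multisegment $m'$ of an irreducible subrepresentation $V$ to be either $m\cup\{\{c\}\}$ or $m$ with some $\Delta_s$ beginning at $c+1$ replaced by $^+\Delta_s$. This is false: replacing a segment $\Delta_s$ with $e(\Delta_s)=c-1$ by $\Delta_s\cup\{c\}$ also adds exactly one copy of $\nu^{c}\alpha$ to the support, and so does any multisegment obtained from $m\cup\{\{c\}\}$ by further elementary union/intersection operations among linked $\Delta_i$; by the geometric lemma such $\left<m'\right>^t$ genuinely occur in $\JH(\pi\times\rho)$, so they have to be excluded by an argument, not by support bookkeeping. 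In \cite{MW} this exclusion is precisely where the work lies: one combines the embedding of $V$ into a product of the $\left<\Delta_i\right>^t$ with $\rho$ and the Jacquet-module condition coming from \eqref{frob} (namely that $\pi\otimes\rho$ be a quotient of $r_{(N-n,n),N}(V)$, with the cuspidal $\rho$ extracted on the correct side, which in the paper's notation is the $\overline{\Jac}$-type component rather than $\Jac_{\rho}$) to show that only the shapes listed in the lemma, with $s$ avoiding the paired indices $i(v)$, can survive. So the genuine difficulty is not only the index bookkeeping you isolate at the end, but already the restriction on the admissible shapes of $m'$; as sketched, your proof would not rule out, for instance, a subrepresentation of the form $\left<\Delta_1,\dots,\Delta_s\cup\{c\},\dots,\Delta_r\right>^t$. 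Deferring wholesale to \cite{MW} is exactly what the paper does and is acceptable, but the support-counting shortcut you substitute for that part of the proof does not work.
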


On note $k(1),\dots,k(w_{\pi})$, ceux des entiers $j(v)$, pour $v\in \left\{ 1, \dots, t_{\pi} \right\}$, pour lesquels $i(v)$ est inf\'erieur ou \'egal \`a $r$, $k(1),\dots,k(w_{\pi})$ \'etant \'ecrits dans l'ordre croissant. Remarquons qu'on a $w_\pi \leq t_{\pi}$. On note aussi $h(1), \dots , h(w_{\pi})$ les $i(v)$ correspondants; $l(1),\dots,l(u_{\pi})$ les $i(v)$ diff\'erents de ceux qui viennent d'\^etre d\'efinis. On pose $l'_{\pi}=\left( t_{\pi} - w_{\pi}\right)$ et $s(1),\dots,s(l'_{\pi})$ les $i(v)$ diff\'erents de ceux qui viennent d'\^etre d\'efinis.

Remarquons que les entiers $t_{\pi}, w_\pi, u_\pi, l'_\pi$ ne d\'ependent que des segments $\left\{ \Delta_{1},\ldots ,\Delta_{r}\right\} $. On garde pourtant les sous-indices $\pi$ pour simplifier la notation. 

Le lemme suivant est la cl\'e de tout ce qui suit:
\begin{lemma}\label{lemme2} 
Soient $\Delta = \left\{ b,\dots,e\right\}, \Delta'=\left\{ b',\dots,e'\right\}$ deux segments tels que $\Delta$ pr\'ec\`ede $\Delta'$. Alors $\overline{\Jac}_b\left( \left< \Delta, \Delta' \right>^t\right) \neq \emptyset \Leftrightarrow b' \neq b+1$
\end{lemma}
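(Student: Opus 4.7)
Mon approche consiste à calculer $\overline{\Jac}_b(\langle \Delta, \Delta' \rangle^t)$ explicitement par une décomposition dans le groupe de Grothendieck $\mathcal{R}$, en utilisant la description classique des facteurs de Jordan-Hölder d'un produit de deux segments liés (à la Zelevinsky), puis en appliquant le foncteur exact $\overline{\Jac}_b$ via l'analogue opposé du lemme géométrique de la section \ref{lemageo}.

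Comme $\Delta$ précède $\Delta'$, l'ordre rangé est $(\Delta', \Delta)$, et $\langle \Delta, \Delta' \rangle^t$ est l'unique quotient irréductible de $\langle \Delta' \rangle^t \times \langle \Delta \rangle^t$ par le Théorème \ref{Z}. Cette induite étant de longueur $2$, on dispose de l'identité
$$\langle \Delta' \rangle^t \times \langle \Delta \rangle^t = \langle \Delta, \Delta' \rangle^t + \langle \Delta \cup \Delta', \Delta \cap \Delta' \rangle^t$$
dans $\mathcal{R}$, où par convention on omet $\Delta \cap \Delta'$ s'il est vide. De plus, les segments $\Delta \cup \Delta'$ et $\Delta \cap \Delta'$ étant emboîtés donc non liés, on a $\langle \Delta \cup \Delta', \Delta \cap \Delta' \rangle^t \simeq \langle \Delta \cup \Delta' \rangle^t \times \langle \Delta \cap \Delta' \rangle^t$ (Théorème \ref{mover}).

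On applique ensuite $\overline{\Jac}_b$ à chaque membre. Pour toute représentation essentiellement de carré intégrable $\langle \Gamma \rangle^t$, on a $\overline{r}_{(\gr(b), \cdot)}(\langle \Gamma \rangle^t) = b(\Gamma) \otimes \langle {}^-\Gamma \rangle^t$ (irréductible). Comme $b(\Delta') = b' \neq b$, le lemme géométrique (version opposée) donne alors
$$\overline{\Jac}_b\bigl(\langle \Delta' \rangle^t \times \langle \Delta \rangle^t\bigr) = b \otimes \bigl(\langle \Delta' \rangle^t \times \langle {}^-\Delta \rangle^t\bigr),$$
$$\overline{\Jac}_b\bigl(\langle \Delta \cup \Delta' \rangle^t \times \langle \Delta \cap \Delta' \rangle^t\bigr) = b \otimes \bigl(\langle {}^-(\Delta \cup \Delta') \rangle^t \times \langle \Delta \cap \Delta' \rangle^t\bigr).$$

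On conclut par analyse de cas. Si $b' = b+1$, les identités combinatoires ${}^-(\Delta \cup \Delta') = \Delta'$ et $\Delta \cap \Delta' = {}^-\Delta$ (immédiates sur les bornes) rendent égales les deux expressions ci-dessus, donc $\overline{\Jac}_b(\langle \Delta, \Delta' \rangle^t) = 0$ par différence. Si $b' \neq b+1$ (cas où nécessairement $|\Delta| \geq 2$), les segments ${}^-\Delta$ et $\Delta'$ sont liés avec ${}^-\Delta$ précédant $\Delta'$; en réappliquant l'identité de Zelevinsky à $\langle \Delta' \rangle^t \times \langle {}^-\Delta \rangle^t$, on obtient après simplification $\overline{\Jac}_b(\langle \Delta, \Delta' \rangle^t) = b \otimes \langle {}^-\Delta, \Delta' \rangle^t$, non nul. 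L'étape technique principale sera la vérification systématique des identités ensemblistes sur les bornes des segments, à traiter séparément dans les sous-cas de recouvrement ($b' \leq e$) et d'adjacence ($b' = e+1$) du cas $b' \neq b+1$.
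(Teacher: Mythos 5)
Votre preuve est correcte et suit essentiellement la m\^eme d\'emarche que celle de l'article : m\^eme d\'ecomposition de longueur $2$ de $\left<\Delta\right>^t\times\left<\Delta'\right>^t$ en $\left<\Delta,\Delta'\right>^t$ et $\left<\Delta\cup\Delta'\right>^t\times\left<\Delta\cap\Delta'\right>^t$, m\^eme calcul de $\overline{\Jac}_b$ des deux termes induits via le lemme g\'eom\'etrique, et m\^eme conclusion par exactitude selon que $b'=b+1$ ou non. Les seules diff\'erences (ordre des facteurs dans l'induite, r\'ealisation de $\overline{\Jac}_b$ par le foncteur de Jacquet oppos\'e avec la cuspidale en premi\`ere position, identification explicite de $\left<{}^-\Delta,\Delta'\right>^t$ lorsque $b'\neq b+1$) sont purement cosm\'etiques.
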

\begin{proof}
On a une suite exacte
$$ 0 \rightarrow \left<\Delta \cup \Delta'\right>^t \times \left<\Delta \cap \Delta'\right>^t \rightarrow \left<\Delta\right>^t \times \left<\Delta' \right>^t \rightarrow \left< \Delta, \Delta' \right>^t \rightarrow 0.$$
Par le lemme g\'eom\'etrique on a : 
\begin{enumerate}
\item La repr\'esentation $r_{((e'-b)n,n), (e'-b+1)n}\left( \left< \Delta \right>^t \times \left< \Delta' \right>^t \right)$ est compos\'ee des repr\'esentations $\left( \left< ^-\Delta \right>^t \times \left< \Delta' \right>^t \right) \otimes b$ et $ \left( \left< \Delta \right>^t \times \left< ^-\Delta' \right>^t \right) \otimes b' $.
\item La repr\'esentation $r_{((e'-b)n,n), (e'-b+1)n} \left( \left<\Delta \cup \Delta'\right>^t \times \left<\Delta \cap \Delta'\right>^t \right) $ est compos\'ee des repr\'esentations $\left( \left<^-(\Delta \cup \Delta')\right>^t \times \left< \Delta \cap \Delta'\right>^t \right) \otimes b$ et $ \left( \left<(\Delta \cup \Delta')\right>^t \times \left<^-( \Delta \cap \Delta' )\right>^t \right) \otimes b'$ (si $\Delta \cap \Delta' \neq \emptyset$).
\end{enumerate}
et donc 
\begin{eqnarray*}
\overline{\Jac}_b\left( \left< \Delta \right>^t \times \left< \Delta' \right>^t \right)& =& \left( \left< ^-\Delta \right>^t \times \left< \Delta' \right>^t \right) \otimes b \\
\overline{\Jac}_b\left( \left<\Delta \cup \Delta'\right>^t \times \left<\Delta \cap \Delta'\right>^t \right) &=& \left( \left<^-(\Delta \cup \Delta')\right>^t \times \left< \Delta \cap \Delta'\right>^t \right) \otimes b .
\end{eqnarray*}

Ainsi, par exactitude du foncteur de Jacquet, on a $\overline{\Jac}_b \left( \left< \Delta, \Delta' \right>^t\right) = \left< ^-\Delta , \Delta' \right>^t  \otimes b $ si, et seulement si, $\left<^-\Delta\right>^t \times \left<\Delta'\right>^t \neq \left<^-(\Delta \cup \Delta')\right>^t \times \left< \Delta \cap \Delta'\right>^t $, \textit{i.e,} si $b' \neq b+1$.
\end{proof}

Cela implique le
\begin{corollary}\label{ineq}
Avec les notations de \ref{l} et \ref{pocos}, $\rho$ \'etant toujours une repr\'esentation cuspidale de $G_n$, on a $l^{\left\{\rho\right\}}_{\pi} \leq n l'_{\pi}$.
\end{corollary}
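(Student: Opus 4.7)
My plan is to proceed by induction on the integer $k$ defined by $l_\pi^{\{\rho\}} = kn$. The base case $k = 0$ is immediate. For the inductive step, set $M = N - n = \gr(\pi)$. By the Remark reformulating $l_\pi^{\{\rho\}}$ in terms of Jacquet modules, there exist $\tau_1 \in \Irr(G_{M-kn})$ with $\tau_1 \otimes \rho^{\times k} \in \JH(r_{(M-kn, kn), M}(\pi))$. Decomposing the right-hand factor via the identity $r_{((k-1)n, n), kn}(\rho^{\times k}) = k \cdot (\rho^{\times(k-1)} \otimes \rho)$ (an easy consequence of the geometric lemma, the cuspidality of $\rho$ and the irreducibility of $\rho^{\times k}$), and regrouping using the transitivity of the Jacquet functor, one extracts an irreducible $\sigma \in \Irr(G_{M-n})$ such that $\sigma \otimes \rho \in \JH(r_{(M-n, n), M}(\pi))$ and $\tau_1 \otimes \rho^{\times (k-1)} \in \JH(r_{(M-kn, (k-1)n), M-n}(\sigma))$; the second condition gives $l_\sigma^{\{\rho\}} \geq (k-1)n$, so the induction hypothesis applied to $\sigma$ yields $k - 1 \leq l'_\sigma$.

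It then suffices to prove $l'_\sigma \leq l'_\pi - 1$. To this end, I would view $\pi$ as the unique irreducible quotient of the induced product $\Pi := \langle\Delta_1\rangle^t \times \cdots \times \langle\Delta_r\rangle^t$, and expand $r_{(M-n, n), M}(\Pi)$ via the geometric lemma, using the explicit Jacquet-module formula for essentially square-integrable representations recalled in Section~\ref{sectadic}. A direct inspection shows that every composition factor of $r_{(M-n, n), M}(\Pi)$ whose right factor has cuspidal support equal to $\{\rho\}$ corresponds to the choice of a single $c$-starting segment $\Delta_s$ of $m$ from which the initial $\rho$ is peeled, producing a contribution of the form $\bigl(\prod_{i \neq s} \langle\Delta_i\rangle^t \times \langle{}^{-}\Delta_s\rangle^t\bigr) \otimes \rho$. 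Consequently the multisegment underlying any surviving $\sigma$ is of the form $(m \setminus \{\Delta_s\}) \cup \{{}^{-}\Delta_s\}$ for some $s$ in the set $T$ of $c$-starting indices.

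The main obstacle is to show that the peeling index $s$ must lie among the $l'_\pi$ unmatched $c$-starting indices in the greedy matching of Lemma~\ref{pocos}. This is where Lemma~\ref{lemme2} enters decisively: applied to the two-segment sub-multisegment $\{\Delta_{j(v)}, \Delta_{i(v)}\}$ associated to a matched pair, the lemma yields $\overline{\Jac}_\rho(\langle\Delta_{j(v)}, \Delta_{i(v)}\rangle^t) = 0$. Propagating this local vanishing through the exact sequence $0 \to K \to \Pi \to \pi \to 0$, combined with the exactness of the Jacquet functor, one concludes that the contribution to $\overline{\Jac}_\rho(\Pi)$ coming from peeling a matched $\Delta_{j(v)}$ lies entirely in $r(K)$ and does not survive in $\JH(r(\pi))$. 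Hence $\Delta_s$ is unmatched; since none of the $w_\pi$ matches of $m$ involve $\Delta_s$, they remain valid matches in the new multisegment, so $w \geq w_\pi$ there, while $t$ has dropped by exactly one. This gives $l'_\sigma \leq (t_\pi - 1) - w_\pi = l'_\pi - 1$, and combining with $k - 1 \leq l'_\sigma$ yields $k \leq l'_\pi$, completing the induction.
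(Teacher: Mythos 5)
Your reduction to the inequality $l'_{\sigma}\leq l'_{\pi}-1$ is where the real difficulty sits, and the two steps you offer for it are not justified. First, the claim that a constituent $\sigma\otimes\rho$ of $r_{(M-n,n),M}(\pi)$ must come from peeling an \emph{unmatched} $c$-starting segment does not follow from Lemma~\ref{lemme2} by ``propagating through $0\to K\to\Pi\to\pi\to0$'': that lemma concerns the representation $\left<\Delta_{k(v)},\Delta_{h(v)}\right>^t$ of a smaller group, which is not a factor of $\Pi$ in any way your argument uses, while the geometric-lemma layer of $r_{(M-n,n),M}(\Pi)$ attached to a matched $c$-starting segment $\Delta_{k(v)}$, namely $\bigl(\prod_{i\neq k(v)}\left<\Delta_i\right>^t\times\left<{}^{-}\Delta_{k(v)}\right>^t\bigr)\otimes\rho$, is certainly nonzero; deciding whether its irreducible constituents lie in $\JH(r(K))$ or survive in $\JH(r(\pi))$ is a genuine multiplicity comparison for which you give no mechanism (note also that a given $\sigma\otimes\rho$ may occur in several layers, so ``the peeling index of $\sigma$'' is not even well defined). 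Second, even granting that the relevant layer is an unmatched one, $\sigma$ is only a \emph{subquotient} of $\prod_{i\neq s}\left<\Delta_i\right>^t\times\left<{}^{-}\Delta_s\right>^t$, so its Langlands multisegment need not be $(m\setminus\{\Delta_s\})\cup\{{}^{-}\Delta_s\}$; the other constituents have multisegments obtained by linking operations, and your count ($t$ drops by one, $w\geq w_{\pi}$) does not apply to them. There is also a smaller structural flaw: you induct on $k=l^{\{\rho\}}_{\pi}/n$ but apply the hypothesis to $\sigma$ knowing only $l^{\{\rho\}}_{\sigma}\geq (k-1)n$, not $l^{\{\rho\}}_{\sigma}<kn$; inducting on $\gr(\pi)$ would repair that point.

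The paper's proof supplies exactly the missing mechanism, and needs no induction: by the proof of Lemma II.9 of \cite{MW}, $\pi$ is a quotient of $V_1\times\dots\times V_r$, where each matched pair is glued into the single factor $\left<\Delta_{h(v)},\Delta_{k(v)}\right>^t$ (and each $\Delta_{k(v)}$ replaced by $\left<\emptyset\right>^t$). Exactness of the Jacquet functor gives $l^{\{\rho\}}_{\pi}\leq l^{\{\rho\}}_{V_1\times\dots\times V_r}$, Lemma~\ref{lemme2} then applies \emph{factor by factor} to show $\overline{\Jac}_{\rho}(V_i)=0$ except for the unmatched $c$-starting factors $V_{s(i)}$, and the geometric lemma yields $l^{\{\rho\}}_{V_1\times\dots\times V_r}=nl'_{\pi}$ outright. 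To salvage your inductive scheme you would need this quotient realization (or an equivalent substitute) to control which $c$-starting segments can actually be peeled from $\pi$ itself, rather than from $\Pi$.
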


\begin{proof}
Pour tout entier $i$ compris entre $1$ et $r$, on pose

$$V_{i}=\left\{ 
\begin{array}{l}
\Delta _{i}\text{, si }i\notin \left\{ h\left( v\right) :1\leq v\leq
w_{\pi}\right\} \cup \left\{ k\left( v\right) :1\leq v\leq w_{\pi}\right\} \\ 
\left\langle \Delta _{h\left( v\right) },\Delta _{k\left( v\right)
}\right\rangle^t ,\text{ si }i=h\left( v\right) ,1\leq v\leq w_{\pi}, \\ 
\left\langle \emptyset \right\rangle^t ,\text{si }i=k\left( v\right) ,1\leq
v\leq w_{\pi}
\end{array}
\right. $$
D'apr\`es \cite[Preuve du Lemme II.9]{MW} $\pi$ est un quotient de 
$V_1 \times \dots \times V_r $, et donc, par exactitude du foncteur de Jacquet $l^{\left\{\rho\right\}}_{\pi} \leq l^{\left\{\rho\right\}}_{V_1 \times \dots \times V_r}$.

D'un autre c\^ot\'e, par le lemme \ref{lemme2}, les seuls $V_i$ tels que $\overline{\Jac}_{\rho}(V_i) \neq \emptyset$ sont les $V_{s(i)}$ avec $1 \leq i \leq l'_{\pi}$ et donc, par le lemme g\'eom\'etrique
$$l^{\left\{\rho\right\}}_{V_1 \times \dots \times V_r}=n l'_{\pi},$$
qui prouve le lemme.
\end{proof}

\qquad

On d\'efinit deux op\'erateurs:
\begin{eqnarray*}
Q_c&:& M(S) \longrightarrow M(S)\\
S_c&:& M(S) \longrightarrow M(S),
\end{eqnarray*}
par les formules ci-dessous. Notons $\pi=  \left<\Delta_1, \dots, \Delta_r\right>^t$; on rappelle qu'on a d\'efini en \ref{pocos} des entiers $u_\pi, l'_\pi$. Alors:
\begin{eqnarray*}
Q_c \left(\left\{ \Delta_1, \dots, \Delta_r\right\}\right)&=& \begin{cases}
\left\{ \Delta_1, \dots, ^+\Delta_{l(1)}, \dots, \Delta_r\right\} &\text{si } u_{\pi}\geq 1\\
\left\{ \left\{c\right\}, \Delta_1, \dots, \Delta_r\right\} &\text{si } u_{\pi}=0
\end{cases}\\
S_c \left(\left\{ \Delta_1, \dots, \Delta_r\right\}\right)&=& \begin{cases}
\left\{ \Delta_1, \dots,^- \Delta_{s(l'_{\pi})}, \dots, \Delta_r\right\} &\text{si } l'_{\pi}\geq 1\\
\emptyset &\text{si } l'_{\pi}=0,
\end{cases}
\end{eqnarray*}
o\`u le multisegment $\left\{ \Delta_1, \dots, \Delta_r\right\}$ est suppos\'e rang\'e.

Le lemme ci-dessous explicite, en termes des segments, l'action de $S_c$ sur un multisegment:

\begin{lemma}\label{ope}
Soit $\left\{ \Delta_1, \dots, \Delta_{r}\right\}$ un multisegment. Notons $\left\{ \Delta'_1, \dots, \Delta'_{r'}\right\}$ le multisegment $S_c(\left\{ \Delta_1, \dots, \Delta_r\right\})$, et $\pi=\left<\Delta_1, \dots, \Delta_r\right>^t$ Alors,
\begin{enumerate}
\item $\Delta'_{h(v)}=\Delta_{h(v)}$ et $\Delta'_{k(v)}=\Delta_{k(v)}$ pour tout $v=1, \dots, t_{\pi}$,
\item $\Delta'_{l(v)}=\begin{cases} ^-\Delta'_{s(l'_{\pi})}& \text{si } v=1\\
 \Delta'_{l(v-1)}& \text{si } v=2, \dots, w_{\pi}+1.
\end{cases}$
\end{enumerate}
\end{lemma}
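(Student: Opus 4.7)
The plan is to verify the lemma by unwinding the definition of $S_c$ and then bookkeeping carefully how segment positions evolve when the unique segment $\Delta_{s(l'_\pi)}$ is truncated. Since $S_c$ modifies only one segment, replacing it by $^-\Delta_{s(l'_\pi)}$ (which now begins with $c+1$ instead of $c$), the proof reduces to understanding how this single change propagates through the ranged enumeration of the resulting multisegment.

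For item (1), I would exploit the fact that the indices $h(v)$ and $k(v)$ are disjoint from the $s$-indices by construction, so the segments $\Delta_{h(v)}$ and $\Delta_{k(v)}$ are untouched by the truncation as elements of the underlying multiset. The nontrivial step is then to check that their positions in the ranged enumeration of $S_c(m)$ coincide with their original positions; this follows from the maximality clause in the definition of $i(v)$, which prevents any other segment from slipping between $\Delta_{k(v)}=\Delta_{j(v)}$ and $\Delta_{h(v)}=\Delta_{i(v)}$ during re-ranking, together with the observation that $^-\Delta_{s(l'_\pi)}$ itself cannot overtake either of them.

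For item (2), the subtle point is that the truncated segment $^-\Delta_{s(l'_\pi)}$, now starting with $c+1$, can be strictly larger in the ranged order than the original $\Delta_{s(l'_\pi)}$, producing a cascade of upward displacements. The $l$-indices are precisely the positions that absorb this cascade: by induction on $v$, I would show that the $v$-th $l$-position carries the segment that previously occupied the $(v-1)$-th $l$-position, the first $l$-slot inheriting the residual segment $^-\Delta'_{s(l'_\pi)}$, and the chain terminating at index $w_\pi+1$. The main obstacle is carrying out this re-enumeration combinatorics precisely: one has to verify that no unintended position is disturbed, that each displacement lands exactly on the next $l$-index, and that the stopping index is correct. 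This will rest on the inductive definition of $i(v)$ and on the total order on segments supported on $\{ \rho\,\nu_\alpha^t : t \in \mathbb{Z}\}$, identified via the fixed cuspidal $\alpha$, treating separately the edge case in which the cascade terminates early because some intermediate $\Delta_{l(v)}$ already equals the segment being pushed up.
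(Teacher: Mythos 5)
Your reading of the lemma as a statement about positions in a ranged enumeration misses what actually has to be proved, and the mechanism you propose does not supply it. The data $h(v),k(v),l(v),s(v)$ must be recomputed for $S_c(m)$ via the inductive matching of Lemma \ref{pocos} (segments beginning with $c$ paired with segments beginning with $c+1$), and the danger is not that segments get re-ordered but that the truncated segment ${}^{-}\Delta_{s(l'_{\pi})}$, which now begins with $c+1$, becomes a new candidate partner in that matching and thereby changes some pair $(k(v),h(v))$, or gets matched itself --- which would also ruin $Q_c\circ S_c=\mathrm{Id}$ and the count $l'$ in Corollaire \ref{ayuda}. Your two justifications (the maximality clause in the definition of $i(v)$ ``preventing segments from slipping between'' $\Delta_{k(v)}$ and $\Delta_{h(v)}$, and the total order on the cuspidal line) do not address this, and the one hypothesis that makes everything work never appears in your sketch: $\Delta_{s(l'_{\pi})}$ is minimal among the unmatched segments beginning with $c$, i.e. among the $\Delta_{s(\alpha)}$. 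That is precisely what the paper exploits: by induction on $v$ one shows $\Delta'_{k(v)}=\Delta_{k(v)}$ because, by this minimality, ${}^{-}\Delta_{s(l'_{\pi})}$ interferes with none of the $\Delta_{j(t)}$ treated before $k(v)$, and $\Delta'_{h(v)}=\Delta_{h(v)}$ after a case distinction $k(v)<s(l'_{\pi})$ versus $k(v)>s(l'_{\pi})$, the second case using that $\Delta_{h(v)}$ does not precede $\Delta_{s(l'_{\pi})}$, whence $\Delta_{h(v)}\le {}^{-}\Delta_{s(l'_{\pi})}$. If one truncated a non-minimal unmatched segment, part (1) and \ref{ayuda}(1) would in general fail, so any correct proof must invoke this minimality; yours does not.

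Concerning (2), there is no ``cascade of upward displacements'' to absorb: once (1) is in place and one knows the truncated segment remains unmatched, the new $l$-list is simply the old one with ${}^{-}\Delta_{s(l'_{\pi})}$ inserted, and the only point to verify is that it is inserted in first position, i.e. $\Delta_{l(1)}\le {}^{-}\Delta_{s(l'_{\pi})}$ --- the paper's one-line closing remark. So the bookkeeping you defer (``verify that no unintended position is disturbed, that each displacement lands on the next $l$-index'') is not the relevant verification, and as written the proposal amounts to a restatement of the lemma plus an appeal to careful re-enumeration rather than a proof.
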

\begin{proof}
On a $\Delta'_{j(v)}=\begin{cases} \Delta'_{j(v)}& \text{si } v < j^{-1}\circ s(l'_{\pi})\\
\Delta'_{j(v+1)}& \text{si } v \geq j^{-1}\circ s(l'_{\pi}).\end{cases}$

Montrons (1) par r\'ecurrence sur $v$:

Par construction de $k$, il n'y a pas de segment $\Delta_{\alpha}$ commen\c{c}ant par $c+1$ qui pr\'ec\`ede $\Delta_{j(t)}$ pour $j(t)<k(1)$. Par minimalit\'e dans l'ensemble des $\Delta_{s(\alpha)}$, $^-\Delta_{s(l'_{\pi})}$ ne pr\'ec\`ede aucun des $\Delta_{j(t)}$ pour $j(t) < k(1)$. D'o\`u
$$\Delta'_{k(1)}=\Delta_{k(1)}.$$
$\Delta_{h(1)}$ est un segment dans $\left\{\Delta'_{1}, \dots , \Delta'_{r}\right\}$ commen\c{c}ant par $c+1$, pr\'ec\'edant $\Delta'_{k(1)}$ et minimal parmi les $\Delta_{\alpha}$ v\'erifiant ces deux propri\'et\'es.
\begin{itemize}
\item Si $k(1)< s(l'_{\pi})$, alors $^-\Delta_{s(l'_{\pi})}$ ne pr\'ec\`ede pas $\Delta'_{k(1)}$.
\item Si $k(1)> s(l'_{\pi})$, puisque $\Delta_{h(1)}$ ne pr\'ec\`ede pas $\Delta_{s(l'_{\pi})}$ (par construction de $h(1)$), alors $\Delta_{h(1)} \leq ^-\Delta_{s(l'_{\pi})},$
\end{itemize}
d'o\`u $$\Delta'_{h(1)}=\Delta_{h(1)}.$$

Supposons maintenant que $\Delta'_{h(i)}=\Delta_{h(i)}$ et $\Delta'_{k(i)}=\Delta_{k(i)}$ pour tout $i\in \{1, \dots, v\}$, et montrons que $\Delta'_{h(v+1)}=\Delta_{h(v+1)}$ et $\Delta'_{k(v+1)}=\Delta_{k(v+1)}$. 

Il n'y a pas de segment $\Delta_{\alpha}$ commen\c{c}ant par $c+1$ qui pr\'ec\`ede $\Delta_{j(t)}$ pour $j(t)<k(v+1)$ et $j(t) \notin \left\{k(1), \dots, k(v) \right\}$. Par minimalit\'e dans l'ensemble des $\Delta_{s(\alpha)}$, le segment $^-\Delta_{s(l'_\pi)}$ ne pr\'ec\`ede aucun des $\Delta_{j(t)}$ pour $j(t) < k(v+1)$ et $j(t) \notin \left\{k(1), \dots, k(v) \right\}$ (ce sont des "$\Delta_{s(\beta)}$"\dots). D'o\`u
$$\Delta'_{k(v+1)}=\Delta_{k(v+1)}.$$
$\Delta_{h(v+1)}$ est un segment dans $\left\{\Delta'_{1}, \dots , \Delta'_{r}\right\}$ commen\c{c}ant par $c+1$, pr\'ec\'edant $\Delta'_{k(v+1)}$ diff\'erent des $\Delta_{h(\beta)}$, pour $\beta \leq v$ et minimal parmi les $\Delta_{\alpha}$ v\'erifiant ces trois propri\'et\'es.
\begin{itemize}
\item Si $k(v+1)< s(l'_{\pi})$, alors $^-\Delta_{s(l'_{\pi})}$ ne pr\'ec\`ede pas $\Delta'_{k(v+1)}$.
\item Si $k(v+1)> s(l'_{\pi})$, puisque $\Delta_{h(v+1)}$ ne pr\'ec\`ede pas $\Delta_{s(l'_{\pi})}$ (par construction de $h(v+1)$), alors $\Delta_{h(v+1)} \leq ^-\Delta_{s(l'_{\pi})},$
\end{itemize}
d'o\`u $$\Delta'_{h(v+1)}=\Delta_{h(v+1)}.$$

Pour montrer (2) il suffit, d'apr\`es ce qui pr\'ec\`ede, de remarquer que $\Delta_{l(1)} \leq ^-\Delta_{s(l'_{\pi})}.$
\end{proof}

Le corollaire suivant est une cons\'equence imm\'ediate:
\begin{corollary}\label{ayuda}
\begin{enumerate}
\item Si $l'_{\pi}\geq 1$, alors $Q_c \circ S_c = Id_{M(S)}$.
\item Si $l'_{\pi}\geq 1$, alors $l'_{\left<S_c \left(\left\{ \Delta_1, \dots, \Delta_r\right\}\right)\right>^t}=l'_{\left<\Delta_1, \dots, \Delta_r\right>^t }-1$.
\item $l'_{\left<Q_c \left(\left\{ \Delta_1, \dots, \Delta_r\right\}\right)\right>^t}=l'_{\left<\Delta_1, \dots, \Delta_r\right>^t }+1$.
\item Si $\omega_i $ est l'un des sous-modules d\'ecrits dans le lemme \ref{pocos} et $\omega_i \ncong \left<Q_c \left(\left\{ \Delta_1, \dots, \Delta_r\right\}\right)\right>^t$, alors $l'_{\omega_i}=l'_{\left<\Delta_1, \dots, \Delta_r\right>^t }$.
\end{enumerate}
\end{corollary}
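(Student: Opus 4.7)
The plan is to verify each of (1)--(4) by direct combinatorial counting, using the explicit description of $Q_c$ and $S_c$ furnished by Lemma \ref{ope} together with the identity $l'_\pi = t_\pi - w_\pi$, where $t_\pi$ denotes the number of segments in $m$ starting at $c$ and $w_\pi$ the size of the Moeglin--Waldspurger matching (the common number of $k(v)$'s and $h(v)$'s).

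For (1), set $m' = S_c(m)$. By Lemma \ref{ope}(2), the segment ${}^-\Delta_{s(l'_\pi)}$ plays the role of $l(1)$ for $m'$; since $m'$ has $u \geq 1$, applying $Q_c$ lands in the relevant branch and replaces this segment by ${}^+({}^-\Delta_{s(l'_\pi)}) = \Delta_{s(l'_\pi)}$, returning us to $m$. For (2), Lemma \ref{ope}(1) preserves the matched pairs, so $w_{\langle m'\rangle^t} = w_\pi$, while $t$ decreases by one (since $S_c$ turns a $c$-segment into a $(c+1)$-segment), giving $l'_{\langle m'\rangle^t} = l'_\pi - 1$.

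For (3), both subcases $u_\pi = 0$ (where $Q_c$ adjoins $\{c\}$) and $u_\pi \geq 1$ (where $Q_c$ replaces $\Delta_{l(1)}$ by ${}^+\Delta_{l(1)}$) satisfy $t_{\langle Q_c(m)\rangle^t} = t_\pi + 1$ and retain the matched pairs of $m$ intact. The content is that the new $c$-segment is itself unmatched: when $u_\pi = 0$ no free $(c+1)$-segment remains; when $u_\pi \geq 1$ the candidate free $(c+1)$-segments $\Delta_{l(v)}$ for $v \geq 2$ all have length at most $|\Delta_{l(1)}|$, strictly less than $|{}^+\Delta_{l(1)}|$. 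Hence $w_{\langle Q_c(m)\rangle^t} = w_\pi$ and $l'_{\langle Q_c(m)\rangle^t} = l'_\pi + 1$. Part (4) is symmetric. For $\omega_i \ncong \langle Q_c(m)\rangle^t$, either $\omega_i = \langle\{c\},\Delta_1,\dots,\Delta_r\rangle^t$ (possible only when $u_\pi \geq 1$, otherwise it would coincide with $\langle Q_c(m)\rangle^t$) and then $\{c\}$ pairs with $\Delta_{l(1)}$; or $\omega_i = \langle\Delta_1,\dots,{}^+\Delta_{l(v)},\dots,\Delta_r\rangle^t$ with $v \geq 2$. The hypothesis $\omega_i \ncong \langle Q_c(m)\rangle^t$ rules out $\Delta_{l(v)} = \Delta_{l(1)}$, and since two segments starting at the same cuspidal become equal once they share a length, we obtain $|\Delta_{l(1)}| \geq |\Delta_{l(v)}| + 1$, so ${}^+\Delta_{l(v)}$ pairs with $\Delta_{l(1)}$. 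In both situations $w_{\omega_i} = w_\pi + 1$ and $t_{\omega_i} = t_\pi + 1$, so $l'_{\omega_i} = l'_\pi$.

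The hard part is not the feasibility of these matchings in principle but verifying that the Moeglin--Waldspurger greedy actually realizes them: one must rule out any rearrangement that would match ${}^+\Delta_{l(1)}$ in (3), or that would leave ${}^+\Delta_{l(v)}$ (respectively $\{c\}$) unmatched in (4). This follows from the maximality of the greedy matching and the same order-of-processing analysis that already underlies Lemma \ref{ope}.
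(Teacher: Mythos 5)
Your proof is correct and follows essentially the route the paper intends: the paper gives no written argument at all (the corollary is declared an immediate consequence of Lemma \ref{ope}), and your verification via the count $l'_{\pi}=t_{\pi}-w_{\pi}$, using Lemma \ref{ope} for (1)--(2) and the greedy matching analysis for (3)--(4), is exactly the computation being left to the reader. One harmless inaccuracy: with the paper's convention (each segment starting at $c$ is matched to the \emph{smallest} admissible free segment starting at $c+1$), the added segment $\left\{ c \right\}$, resp. ${}^{+}\Delta_{l(v)}$, in part (4) gets paired with the smallest available free segment rather than with $\Delta_{l(1)}$, but this does not affect $t_{\pi}$ or $w_{\pi}$ and hence not the conclusion.
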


Le th\'eor\`eme suivant est la conclusion de notre \'etude soigneuse:
\begin{theorem}\label{unicite} Soient $\rho$ une repr\'esentation cuspidale de $G_n$, $\pi=  \left<\Delta_1, \dots, \Delta_r\right>^t$. Avec les notations pr\'ec\'edentes, on a :
\begin{enumerate}
\item $l^{\left\{\rho\right\}}_{\pi} = n l'_{\pi}$
\item L'unique sous-module irr\'eductible de $\left<\Delta_1, \dots, \Delta_r\right>^t \times \rho$ est \newline
$\left<Q_c \left(\left\{ \Delta_1, \dots, \Delta_r\right\}\right)\right>^t.$
\item Si $l'_{\pi}\geq 1$, $\left<\Delta_1, \dots, \Delta_r\right>^t$ est un sous-module irr\'eductible de $$\left<S_c \left(\left\{ \Delta_1, \dots, \Delta_r\right\}\right)\right> \times \rho.$$
\item L'unique quotient irr\'eductible de $ \rho \times \left<\Delta_1, \dots, \Delta_r\right>^t $ est
$$\left<Q_c \left(\left\{ \Delta_1, \dots, \Delta_r\right\}\right)\right>^t.$$
\end{enumerate}
\end{theorem}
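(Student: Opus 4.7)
My plan is to prove (1)--(4) simultaneously by strong induction on $l'_\pi$. The list of candidate irreducible submodules supplied by Lemma \ref{pocos}, combined with the numerical constraints of Corollary \ref{ineq} and Corollary \ref{ayuda}, will pinpoint the unique submodule guaranteed by Theorem \ref{l}. Throughout I write $m=\{\Delta_1,\dots,\Delta_r\}$.

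In the base case $l'_\pi=0$, assertion (1) is immediate from $0\le l^{\{\rho\}}_\pi\le n l'_\pi=0$ via Corollary \ref{ineq}, and (3) is vacuous. For (2), the unique irreducible submodule $V$ of $\pi\times\rho$ produced by Theorem \ref{l} satisfies $l^{\{\rho\}}_V=n$ by Remark \ref{mas1}, hence $l'_V\ge 1$ by Corollary \ref{ineq}. Among the candidates $\omega_i$ of Lemma \ref{pocos}, Corollary \ref{ayuda}(3)--(4) shows $l'_{\omega_i}=0$ unless $\omega_i=\langle Q_c(m)\rangle^t$, so $V=\langle Q_c(m)\rangle^t$.

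For the inductive step with $l'_\pi=k\ge 1$, I set $\sigma:=\langle S_c(m)\rangle^t$, which has $l'_\sigma=k-1$ by Corollary \ref{ayuda}(2). The inductive hypothesis (2) applied to $\sigma$ identifies $\pi=\langle Q_c\circ S_c(m)\rangle^t$ (by Corollary \ref{ayuda}(1)) as the unique irreducible submodule of $\sigma\times\rho$, which proves (3). Corollary \ref{ineq} then gives the upper bound $l^{\{\rho\}}_\pi\le nk$. For the matching lower bound, the embedding $\pi\hookrightarrow\sigma\times\rho$ combined with Frobenius reciprocity \eqref{frob} puts $\sigma\otimes\rho$ into $\JH\!\left(r_{(N-n,n),N}(\pi)\right)$; applying the inductive hypothesis (1) to $\sigma$ supplies a JH factor $\tau_1\otimes\tau_2$ of $r_{(N-nk,n(k-1)),N-n}(\sigma)$ with $\supp(\tau_2)\subset\{\rho\}$ and $\gr(\tau_2)=n(k-1)$; transitivity of the Jacquet functor then exhibits $\tau_1\otimes\tau_2\otimes\rho$ as a JH factor of $r_{(N-nk,n(k-1),n),N}(\pi)$, and regrouping the last two components produces a JH factor $\tau_1\otimes\eta$ of $r_{(N-nk,nk),N}(\pi)$ with $\supp(\eta)\subset\{\rho\}$ and $\gr(\eta)=nk$. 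This gives $l^{\{\rho\}}_\pi\ge nk$, so (1) holds; then (2) follows exactly as in the base case (now $l^{\{\rho\}}_V=n(k+1)$ isolates $V=\langle Q_c(m)\rangle^t$ as the only candidate with $l'\ge k+1$).

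Assertion (4) will be proved symmetrically, using the opposite Jacquet functor and a mirror version of the invariant $l$ (as hinted before Theorem \ref{ttt}), or equivalently by passing to contragredients and invoking (2) applied to the dual multisegment. The main obstacle is the lower bound in (1): converting the mere existence of an embedding $\pi\hookrightarrow\sigma\times\rho$ into a JH factor of cuspidal weight exactly $nk$ in a deeper Jacquet module of $\pi$ demands a careful chaining of the inductive hypothesis on $\sigma$ through the transitivity formula for Jacquet restriction, and it is only through this maneuver that the numerical equality $l^{\{\rho\}}_\pi=nl'_\pi$ can be bootstrapped across induction steps.
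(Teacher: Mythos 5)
Your treatment of (1)--(3) is essentially the paper's own proof: the same induction on $l'_\pi$, with the candidate list of Lemma \ref{pocos}, the bound of Corollaire \ref{ineq} and Corollaire \ref{ayuda} isolating the unique submodule supplied by Th\'eor\`eme \ref{l}. The differences are cosmetic: for $l^{\{\rho\}}_\pi=nl'_\pi$ the paper simply applies Remarque \ref{mas1} to the embedding of $\pi$ into $\left<S_c(\{\Delta_1,\dots,\Delta_r\})\right>^t\times\rho$ (your chaining of Jacquet modules through transitivity is in effect a proof of the easy half of that remark), and to exclude the candidates $\omega_i\neq\left<Q_c(\{\Delta_1,\dots,\Delta_r\})\right>^t$ the paper uses the freshly established part (1) for representations with $l'=k$ where you use Corollaire \ref{ineq}; both are valid. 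The one point to correct concerns (4): your parenthetical alternative ``pass to contragredients and invoke (2) for the dual multisegment'' does not work as stated, because contragredience turns irreducible quotients of $\rho\times\pi$ into irreducible submodules of $\widetilde{\rho}\times\widetilde{\pi}$, with the cuspidal factor on the \emph{left}, a configuration that (2) does not cover (that manoeuvre instead identifies the unique quotient of $\widetilde{\pi}\times\widetilde{\rho}$, i.e.\ the corollary of la section \ref{aap}). Your primary route for (4) --- the opposite Jacquet functor and the mirror invariant $l$, i.e.\ the mechanism behind Th\'eor\`eme \ref{ttt} combined with (1) and \ref{ayuda}(4) --- is exactly the paper's argument and is the one to retain; note that after conjugating the opposite parabolic back to a standard one, this mirror invariant is detected by the second factor of the usual Jacquet module, so it is governed by the same combinatorics $l'_\pi$ and the same operator $Q_c$.
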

\begin{proof}
(3) est une cons\'equence de (2) et \ref{ayuda}.(1). 

Montrons (1) et (2) par r\'ecurrence sur $l'_{\pi}$.

Si $l'_{\pi}=0$, d'apr\`es \ref{ineq} on a $l^{\left\{\rho\right\}}_{\pi} =0$. Si $\pi \times \rho$ avait pour sous-modules l'un des $\omega_i \neq \left<Q_c \left(\left\{ \Delta_1, \dots, \Delta_r\right\}\right)\right>^t$, on aurait, par \ref{ayuda}.(4), l'\'galit\'e $l'_{\omega_i}=0$, donc $l^{\left\{\rho\right\}}_{\omega_i} =0$ ce qui est absurde par \ref{mas1}.

Supposons $l'_{\pi}=k >0$. Alors, par \ref{ayuda}.(2), $$l'_{\left<S_c \left(\left\{ \Delta_1, \dots, \Delta_r\right\}\right)\right>^t}=l'_{\left<\Delta_1, \dots, \Delta_r\right>^t }-1$$ donc, par hypoth\`ese de r\'ecurrence, le th\'eor\`eme est vrai pour la repr\'esentation $\left<S_c \left(\left\{ \Delta_1, \dots, \Delta_r\right\}\right)\right>^t$, \textit{i.e.} 
\begin{enumerate}
\item $\left<Q_c \circ S_c \left(\left\{ \Delta_1, \dots, \Delta_r\right\}\right)\right>^t $ est l'unique sous-module de $\pi \times \rho$. Or, $$\left<Q_c \circ S_c \left(\left\{ \Delta_1, \dots, \Delta_r\right\}\right)\right>^t=\pi $$ par \ref{ayuda}.(1).
\item $l^{\left\{\rho\right\}}_{\left<S_c \left(\left\{ \Delta_1, \dots, \Delta_r\right\}\right)\right>^t}=n(k-1).$
\end{enumerate}
Par \ref{mas1}, on a que $l^{\left\{\rho\right\}}_{\pi}=n(k-1)+n=kn$.

Finalement supposons que $\pi \times \rho$ a pour sous-module l'un des $\omega_i \neq \left<Q_c \left(\left\{ \Delta_1, \dots, \Delta_r\right\}\right)\right>^t$. Alors, par \ref{ayuda}.(4) on aurait $l'_{\omega_i}=l'_{\left<\Delta_1, \dots, \Delta_r\right>^t }=k$. On vient de montrer que, si $l'_{\omega_i}=k$, on trouve par hypoth\`ese de r\'ecurrence $l^{\left\{\rho\right\}}_{\omega_i} =kn=l^{\left\{\rho\right\}}_{\pi}$ ce qui est absurde, par \ref{mas1}.

(4) est une cons\'equence de (1), de \ref{ayuda}.(4) et de \ref{ttt} ce qui ach\`eve la d\'emonstration du th\'eor\`eme.
\end{proof}
\section{Applications}\label{aap}
La proposition suivante est due \`a \cite{GK} dans le cas o\`u $D=F$, mais leur preuve n'est pas valable quand $D$ n'est pas commutatif car la transpos\'ee d'une matrice inversible n'est plus forc\'ement inversible. Elle d\'ecoule imm\'ediatement des parties 2. et 4. du th\'eor\`eme pr\'ec\'edent.
\begin{proposition}\label{cambio}
Soient $\pi, \pi' \in \Irr, \rho \in \mathcal{C}$. Les conditions suivantes sont \'equivalentes:\begin{enumerate}
\item $\Hom \left( \pi' , \pi \times \rho \right) \neq 0;$
\item $\Hom \left( \rho \times \pi, \pi' \right) \neq 0. $
\end{enumerate}
\end{proposition}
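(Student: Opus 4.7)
L'id\'ee est simplement de traduire les deux conditions en termes du th\'eor\`eme \ref{unicite}, qui identifie explicitement l'unique sous-module irr\'eductible de $\pi \times \rho$ et l'unique quotient irr\'eductible de $\rho \times \pi$, et de constater qu'il s'agit de la m\^eme repr\'esentation.

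Plus pr\'ecis\'ement, d'apr\`es le th\'eor\`eme \ref{Z}(3), toute repr\'esentation irr\'eductible $\pi$ de $G_N$ peut s'\'ecrire sous la forme $\pi = \left\langle \Delta_1, \dots, \Delta_r\right\rangle^t$ pour un multisegment rang\'e convenable. La repr\'esentation cuspidale $\rho \in \mathcal{C}$ \'etant fix\'ee, on peut, gr\^ace \`a la remarque \ref{rectas}, se ramener au cas o\`u le support cuspidal de $\pi$ est contenu dans la droite $\{\nu_\alpha^t \alpha : t \in \mathbb{Z}\}$ associ\'ee \`a $\rho$ (sinon $\pi \times \rho$ est d\'ej\`a irr\'eductible par le th\'eor\`eme \ref{mover}, et les deux conditions sont trivialement \'equivalentes \`a $\pi' \cong \pi \times \rho$).

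Le c\oe ur de la preuve consiste \`a appliquer les parties (2) et (4) du th\'eor\`eme \ref{unicite} \`a cette \'ecriture. La partie (2) affirme que $\pi \times \rho$ admet un unique sous-module irr\'eductible, \`a savoir $V := \left\langle Q_c(\{\Delta_1, \dots, \Delta_r\})\right\rangle^t$. La partie (4) affirme que $\rho \times \pi$ admet un unique quotient irr\'eductible, qui est pr\'ecis\'ement la m\^eme repr\'esentation $V$. Par cons\'equent, la condition (1), $\Hom(\pi', \pi \times \rho) \neq 0$, \'equivaut \`a $\pi' \cong V$, et de m\^eme la condition (2), $\Hom(\rho \times \pi, \pi') \neq 0$, \'equivaut \`a $\pi' \cong V$. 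Les deux conditions sont donc \'equivalentes.

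Il n'y a pas vraiment d'obstacle technique ici : tout le travail a \'et\'e fait dans le th\'eor\`eme \ref{unicite}, qui identifie les param\`etres de l'unique sous-module et de l'unique quotient via le m\^eme op\'erateur combinatoire $Q_c$. L'argument n'utilise que la classification \`a la Langlands et l'\'enonc\'e de ce th\'eor\`eme.
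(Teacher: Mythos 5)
Your core argument is exactly the paper's: the proposition is obtained there as an immediate consequence of parts (2) and (4) of Theorem \ref{unicite}, which identify one and the same representation, $\left\langle Q_c\left(\left\{\Delta_1,\dots,\Delta_r\right\}\right)\right\rangle^t$, as the unique irreducible submodule of $\pi\times\rho$ and as the unique irreducible quotient of $\rho\times\pi$; since $\pi'$ is irreducible, each of the two Hom conditions is equivalent to $\pi'\simeq\left\langle Q_c\left(\left\{\Delta_1,\dots,\Delta_r\right\}\right)\right\rangle^t$, whence the equivalence.

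The one step that does not stand as written is your parenthetical justification of the reduction to $\supp(\pi)\subset\left\{\nu_{\alpha}^{t}\alpha : t\in\mathbb{Z}\right\}$. Your dichotomy is ``support contained in the line of $\rho$'' versus ``not contained'', and in the second case you claim that $\pi\times\rho$ is already irreducible by Theorem \ref{mover}. This fails when the support is only \emph{partially} on the line: take $\sigma$ cuspidal off the line of $\rho$ and $\pi=\nu_{\rho}\rho\times\sigma$ (irreducible by Remark \ref{rectas}); then $\pi\times\rho$ has the same composition factors as $\sigma\times\left(\nu_{\rho}\rho\times\rho\right)$, hence length at least $2$. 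Moreover Theorem \ref{mover} concerns products of segment representations, not an arbitrary irreducible $\pi$ times $\rho$; in the genuinely disjoint case the irreducibility you want comes from the tensor decomposition of Remark \ref{rectas}. The correct reduction is the one the paper makes tacitly at the beginning of Section \ref{3}: by Remark \ref{rectas} one splits off the segments of the multisegment of $\pi$ lying outside the line of $\rho$ and checks that they are inert (they never begin with $c$ or $c+1$, so they play no role in Lemma \ref{pocos} nor in $Q_c$), so that the statements of Theorem \ref{unicite} apply verbatim. With the parenthetical replaced by this remark, your proof coincides with the paper's.
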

\begin{remark}
On pense, bien s\^ur, que l'hypoth\`ese $\rho$ cuspidale n'est pas n\'ecessaire.
\end{remark}

Notons $\Delta_{j'(1)},\dots,\Delta_{j'(t'_{\pi})}$ les segments de $m$ (dans l'ordre d\'ecroissant) se terminant par $c$; pour tout entier $v$ compris entre $1$ et $t'_{\pi}$, on d\'efinit inductivement sur $v$, l'entier $i'(v)$ comme \'etant soit le plus petit entier diff\'erent de $i'(1),\dots,i'(v-1)$, tel que $\Delta_{i'(v)}$ termine par $c-1$ et pr\'ec\`ede $\Delta_{j'(t'_{\pi}-v+1)}$, soit $i'(v)=r+1$ si un tel entier n'existe pas. On note $l'(1),\dots,l'(u'_{\pi})$ les $i'(v)$ diff\'erents de ceux qui viennent d'\^etre d\'efinis. 

$\omega'_0= \left< \left\{ c \right\},\Delta_1, \dots, \Delta_r \right>^t$.

$\omega'_i= \left< \Delta_1, \dots, \Delta_{l'(i)}^+,\dots, \Delta_r \right>^t$, o\`u $i \in 1,\dots, u'_{\pi}$.

On a d\'efini les entiers $j'(v),i'(v)$ pour que, en appliquant la contragr\'ediente \`a partir du th\'eor\`eme \ref{unicite}(2), on trouve:
\begin{corollary}
L'unique quotient irr\'eductible de $\pi \times \rho$ est $\omega'_1$, si $u'_{\pi}>0$ o\`u $\omega'_0$, si $u'_{\pi}=0$.
\end{corollary}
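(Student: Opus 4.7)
Le plan est de d\'eduire ce corollaire du th\'eor\`eme \ref{unicite}(2) par contragr\'ediente. Le foncteur contragr\'edient est exact et involutif, et \'echange sous-modules et quotients irr\'eductibles. Comme la contragr\'ediente de l'induite $\pi \times \rho$ s'exprime comme une induite parabolique du produit tensoriel de $\widetilde{\pi}$ et $\widetilde{\rho}$, la recherche de l'unique quotient irr\'eductible de $\pi \times \rho$ se ram\`ene, par passage \`a la contragr\'ediente, \`a l'identification de l'unique sous-module irr\'eductible d'une induite dans laquelle $\widetilde{\pi}$ et $\widetilde{\rho}$ jouent les r\^oles de $\pi$ et $\rho$ respectivement.

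L'application du th\'eor\`eme \ref{unicite}(2) dans ce contexte fournit cet unique sous-module sous la forme $\left< Q_{-c}(\widetilde{m}) \right>^t$, o\`u $\widetilde{m} = \{\widetilde{\Delta}_1, \dots, \widetilde{\Delta}_r\}$ est le multisegment dual de $m$: concr\`etement, chaque segment $\widetilde{\Delta}_i$ s'obtient en rempla\c{c}ant $\nu_{\alpha}^t \alpha$ par $\nu_{\widetilde{\alpha}}^{-t} \widetilde{\alpha}$ et en renversant l'ordre interne du segment; quant \`a $\widetilde{\rho}$, elle correspond \`a l'exposant $-c$ par rapport \`a $\widetilde{\alpha}$.

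Il reste alors \`a traduire l'action de $Q_{-c}$ sur $\widetilde{m}$ en une op\'eration sur $m$. Les observations cl\'es sont les suivantes: un segment de $\widetilde{\pi}$ d\'ebute par $-c$ (resp. $-c+1$) si et seulement si le segment correspondant de $\pi$ se termine par $c$ (resp. $c-1$); la relation de pr\'ec\'edence est renvers\'ee par la contragr\'ediente; et l'ordre $\geq$ sur le multisegment de $\widetilde{\pi}$ correspond \`a l'ordre oppos\'e sur celui de $\pi$. Par cons\'equent, les indices $j(v), i(v), l(v)$ intervenant dans la d\'efinition de $Q_{-c}(\widetilde{m})$ se transportent en les indices $j'(v), i'(v), l'(v)$ d\'efinis dans le texte, ce qui explique notamment le d\'ecalage $t'_\pi - v + 1$ dans la d\'efinition de $i'(v)$. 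L'op\'eration $^+\widetilde{\Delta}$ (ajout \`a gauche dans le dual) devient $\Delta^+$ (ajout \`a droite dans l'original), et l'adjonction du singleton $\{-c\}$ devient l'adjonction de $\{c\}$. En reprenant la contragr\'ediente, on en d\'eduit imm\'ediatement les formules annonc\'ees: l'unique quotient irr\'eductible est $\omega'_1$ lorsque $u'_\pi \geq 1$, et $\omega'_0$ lorsque $u'_\pi = 0$.

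Le principal obstacle est la v\'erification minutieuse de ce dictionnaire combinatoire entre, d'une part, les indices et op\'erations associ\'es \`a $\widetilde{\pi}$ relativement \`a l'exposant $-c$, et d'autre part ceux associ\'es \`a $\pi$ relativement \`a $c$. Il faut notamment v\'erifier que l'inversion d'indice $t'_\pi - v + 1$ et la condition que $\Delta_{i'(v)}$ pr\'ec\`ede $\Delta_{j'(t'_\pi - v + 1)}$ correspondent fid\`element, via la contragr\'ediente, aux conditions analogues pour $\widetilde{\pi}$. Une fois ce dictionnaire \'etabli, la conclusion est imm\'ediate.
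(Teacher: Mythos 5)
Votre proposition est correcte et suit exactement la d\'emarche du texte : le papier ne donne pour preuve que la phrase pr\'ec\'edant l'\'enonc\'e (les entiers $j'(v), i'(v)$ ont \'et\'e d\'efinis pr\'ecis\'ement pour qu'en appliquant la contragr\'ediente au th\'eor\`eme \ref{unicite}(2), c'est-\`a-dire en identifiant l'unique quotient irr\'eductible de $\pi\times\rho$ \`a la contragr\'ediente de l'unique sous-module irr\'eductible de $\widetilde{\pi}\times\widetilde{\rho}$, on obtienne $\omega'_1$ ou $\omega'_0$). Le dictionnaire combinatoire que vous explicitez (segments d\'ebutant par $-c$, $-c+1$ pour $\widetilde{\pi}$ contre segments se terminant par $c$, $c-1$ pour $\pi$, renversement de la pr\'ec\'edence et de l'ordre, d'o\`u le d\'ecalage $t'_\pi-v+1$, et ${}^+\widetilde{\Delta}\leftrightarrow\Delta^+$) est exactement ce que l'auteur sous-entend sans le r\'ediger.
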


Si $\pi \times \rho$ est irr\'eductible, alors il est clair que $u'_{\pi}=u_{\pi}=0$. R\'eciproquement,
si $u'_{\pi}=u_{\pi}=0$ on a que $\omega_0=\omega'_0$ est le seul quotient et sous-module irr\'eductible de $\pi \times \rho$. Or, d'apr\`es \ref{l}, $\omega_0$ appara\^it avec multiplicit\'e $1$ dans $\JH(\pi \times \rho)$, d'o\`u
\begin{theorem}\label{irredu}
$\pi \times \rho$ est irr\'eductible si, et seulement si, $u'_{\pi}=u_{\pi}=0$.
\end{theorem}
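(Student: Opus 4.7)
The plan is to deduce the theorem from Theorem \ref{unicite}(2) (the explicit description of the unique irreducible submodule of $\pi\times\rho$) and from the corollary immediately preceding it (which describes the unique irreducible quotient), combined with the multiplicity-one assertion of Theorem \ref{l}.

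For the reverse direction, suppose $u_\pi=u'_\pi=0$. Then the unique irreducible submodule of $\pi\times\rho$ is $\omega_0=\left\langle\{c\},\Delta_1,\dots,\Delta_r\right\rangle^t$ and the unique irreducible quotient is $\omega'_0$, which coincides with $\omega_0$ under the hypothesis $u'_\pi=0$. By Theorem \ref{l}, $\omega_0$ appears with multiplicity one in $\JH(\pi\times\rho)$. I would then argue that this forces $\pi\times\rho$ to be irreducible: otherwise the socle $\omega_0$ is a proper submodule, the quotient $(\pi\times\rho)/\omega_0$ is nonzero, and every irreducible quotient of it is simultaneously an irreducible quotient of $\pi\times\rho$, hence equal to $\omega_0$. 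This contradicts multiplicity one.

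For the direct implication, suppose $\pi\times\rho$ is irreducible, so it coincides with its unique submodule and its unique quotient; in particular, these two irreducible representations must be isomorphic. According to Theorem \ref{unicite}(2) and the corollary, the submodule is $\omega_0$ (if $u_\pi=0$) or $\omega_1=\left\langle\Delta_1,\dots,{}^+\Delta_{l(1)},\dots,\Delta_r\right\rangle^t$ (if $u_\pi\geq 1$), while the quotient is $\omega'_0=\omega_0$ (if $u'_\pi=0$) or $\omega'_1=\left\langle\Delta_1,\dots,\Delta_{l'(1)}^+,\dots,\Delta_r\right\rangle^t$ (if $u'_\pi\geq 1$). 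By Theorem \ref{Z}, these irreducibles are isomorphic iff their multisegments coincide, so I would check that $\omega_0,\omega_1,\omega'_1$ are pairwise distinct. The representation $\omega_0$ has a multisegment of cardinality $r+1$, whereas $\omega_1$ and $\omega'_1$ have cardinality $r$, which distinguishes $\omega_0$. For the remaining case $\omega_1=\omega'_1$, I would argue by a short case analysis: since $\Delta_{l(1)}$ begins at $c+1$ and $\Delta_{l'(1)}$ ends at $c-1$, no single segment can play both roles (that would require $c+1\leq c-1$), so in the symmetric-difference $\{{}^+\Delta_{l(1)},\Delta_{l'(1)}\}=\{\Delta_{l(1)},\Delta_{l'(1)}^+\}$ the only possible pairings either equate a segment with its proper left/right extension or identify $\Delta_{l(1)}$ with $\Delta_{l'(1)}$, both impossible. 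Hence the equality of submodule and quotient forces $u_\pi=u'_\pi=0$.

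The main obstacle, such as it is, is the combinatorial verification that $\omega_1\neq\omega'_1$; it is elementary but relies crucially on the structural constraints imposed by the very definitions of the indices $l(1)$ and $l'(1)$ given in Lemma \ref{pocos} and its variant.
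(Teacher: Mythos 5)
Your proposal is correct and takes essentially the same route as the paper: your converse direction (socle $=$ cosocle $=\omega_0$, which has multiplicity $1$ in $\JH(\pi\times\rho)$ by Th\'eor\`eme \ref{l}) is exactly the paper's argument. The direct implication is simply declared \emph{clair} in the paper, and your way of making it precise --- irreducibility forces the unique submodule and unique quotient to coincide, while Th\'eor\`eme \ref{Z}(2) together with the cardinality count and the observation that a segment cannot both begin at $c+1$ and end at $c-1$ shows $\omega_0$, $\omega_1$, $\omega'_1$ are pairwise non-isomorphic --- is a correct filling-in of that step within the same framework.
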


Maintenant il est clair que tout ce qui pr\'ec\`ede dans cette section est aussi vrai pour les param\'etrisations \textit{\`a la Zelevinsky}. Il suffit de remplacer, dans les \'enonc\'es et les preuves,
\begin{enumerate}
\item Le mot \textit{quotient} par \textit{sous-repr\'esentation},
\item le mot \textit{sous-repr\'esentation} par \textit{quotient},
\item le sens de toutes les fl\`eches
\item le symbole $\left< \, \, \right>^t$ par $\left< \, \, \right>$,
\item le foncteur $r$ (resp. $\overline{r}$) par le foncteur $\overline{r}$ (resp. $r$).
\end{enumerate}
Ainsi, on montre un th\'eor\`eme similaire au th\'eor\`eme \ref{unicite}:
\begin{theorem}\label{ueu}
\begin{enumerate}
\item L'unique quotient irr\'eductible de la repr\'esentation $\left<\Delta_1, \dots, \Delta_r\right> \times \rho$ est 
$\left<Q_c \left(\left\{ \Delta_1, \dots, \Delta_r\right\}\right)\right>.$
\item L'unique sous-module irr\'eductible de $ \rho \times \left<\Delta_1, \dots, \Delta_r\right> $ est \newline
$\left<Q_c \left(\left\{ \Delta_1, \dots, \Delta_r\right\}\right)\right>.$
\end{enumerate}
\end{theorem}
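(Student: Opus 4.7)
Le plan est de faire tourner l'intégralité de la machinerie de la section précédente, conduisant au théorème \ref{unicite}, en appliquant la translation de paramétrisation décrite juste avant l'énoncé. Concrètement, chaque résultat intermédiaire possède un analogue \emph{à la Zelevinsky} dont la preuve est la même, à condition de remplacer simultanément $\langle\ \rangle^t$ par $\langle\ \rangle$, le foncteur de Jacquet $\overline{r}$ par $r$ (et $\overline{\Jac}$ par $\Jac$), et d'échanger les rôles de sous-module et de quotient. L'essentiel est de vérifier que chaque ingrédient utilisé est bien symétrique sous cet échange.

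Je commencerais par établir l'analogue \textit{à la Zelevinsky} du lemme \ref{pocos}: pour $\pi=\langle\Delta_1,\ldots,\Delta_r\rangle$ et $\rho=\nu^c\alpha$, les quotients irréductibles de $\pi\times\rho$ (resp.\ les sous-modules irréductibles de $\rho\times\pi$) sont contenus dans la liste $\langle\{c\},\Delta_1,\ldots,\Delta_r\rangle$ et les $\langle\Delta_1,\ldots,{}^+\Delta_s,\ldots,\Delta_r\rangle$, où $s$ parcourt le même ensemble d'indices combinatoires; la preuve de [MW, II.9] n'utilise que le lemme géométrique et le calcul des modules de Jacquet de $\langle\Delta\rangle$ (et non de $\langle\Delta\rangle^t$), qui sont formellement identiques dans les deux conventions. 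Ensuite, je montrerais l'analogue du lemme \ref{lemme2}: $\Jac_b(\langle\Delta,\Delta'\rangle)\neq\emptyset \Leftrightarrow b'\neq b+1$ lorsque $\Delta$ précède $\Delta'$; la preuve reprend mot pour mot la suite exacte courte
$$0\to\langle\Delta\cup\Delta'\rangle\times\langle\Delta\cap\Delta'\rangle\to\langle\Delta\rangle\times\langle\Delta'\rangle\to\langle\Delta,\Delta'\rangle\to0$$
et applique le lemme géométrique au foncteur $r_{((e'-b)n,n),(e'-b+1)n}$ au lieu de $\overline{r}$.

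Les opérateurs $Q_c$ et $S_c$ étant définis purement au niveau des multisegments, ils sont inchangés, et les énoncés du lemme \ref{ope} et du corollaire \ref{ayuda} s'appliquent verbatim. Je redéfinirais alors l'entier $l_\pi^{\{\rho\}}$ par la même formule, mais en utilisant $\overline{r}$ (ou, de façon équivalente, les morphismes $\Hom(\tau_1\times\tau_2,\pi)$), de sorte que l'analogue de la remarque \ref{mas1} s'exprime en termes des quotients irréductibles. Le corollaire \ref{ineq}, qui fournit l'inégalité $l_\pi^{\{\rho\}}\leq n\,l'_\pi$, se transpose sans modification: la preuve via la résolution $V_1\times\cdots\times V_r$ n'utilise que l'exactitude du foncteur de Jacquet et le lemme \ref{lemme2}, tous deux symétriques.

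On peut alors conclure par récurrence sur $l'_\pi$ comme dans la preuve du théorème \ref{unicite}. Cas de base $l'_\pi=0$: le corollaire \ref{ayuda}.(4) exclut tout candidat $\omega_i$ autre que $\langle Q_c(\{\Delta_1,\ldots,\Delta_r\})\rangle$ grâce à la borne $l^{\{\rho\}}_{\omega_i}=0$, contradictoire avec l'analogue de \ref{mas1}. Pas récurrent: on applique l'hypothèse à $\langle S_c(\{\Delta_1,\ldots,\Delta_r\})\rangle$, dont le paramètre $l'$ est strictement plus petit d'après \ref{ayuda}.(2), et l'on identifie $\pi = \langle Q_c\circ S_c(\{\Delta_1,\ldots,\Delta_r\})\rangle$ par \ref{ayuda}.(1); on en déduit (2), puis (1) par l'analogue \textit{à la Zelevinsky} du théorème \ref{ttt}. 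La seule véritable difficulté — formelle plutôt que technique — est de s'assurer méthodiquement que chaque brique (suite exacte courte des segments liés, calcul des modules de Jacquet, preuve de \ref{pocos}, remarque \ref{mas1}) respecte effectivement la symétrie du dictionnaire, ce que garantit précisément la liste donnée par l'auteur avant l'énoncé.
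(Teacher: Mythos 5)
Votre démonstration suit exactement la démarche du papier : l'auteur ne donne d'ailleurs pas de preuve séparée du théorème \ref{ueu}, mais renvoie précisément au dictionnaire (échange quotient/sous-module, $\left<\,\,\right>^t$/$\left<\,\,\right>$, $r$/$\overline{r}$, sens des flèches) appliqué à toute la chaîne \ref{pocos}--\ref{lemme2}--\ref{ayuda}--\ref{ineq}--\ref{mas1} menant au théorème \ref{unicite}, ce que vous explicitez brique par brique. Les seules divergences (sens de la suite exacte courte dans l'analogue de \ref{lemme2}, choix de $r$ ou $\overline{r}$ dans tel calcul) sont sans incidence puisque l'argument ne dépend que de la semi-simplification et de l'exactitude du foncteur de Jacquet.
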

\begin{corollary}\label{co}
Soient $\rho$ une repr\'esentation cuspidale et $\pi$ une repr\'esentation irr\'eductible. Notons $\tau$ l'involution de Zelevinsky (voir \ref{zze}). Les conditions suivantes sont \'equivalentes:
\begin{enumerate}
\item $V$ est un sous-module irr\'eductible de $\rho \times \pi$.
\item $\tau \left(V\right)$ est un sous-module irr\'eductible de $\tau \left( \pi\right) \times \tau \left(\rho\right).$
\end{enumerate}
\end{corollary}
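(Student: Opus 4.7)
Mon plan est de combiner les th\'eor\`emes \ref{unicite} et \ref{ueu} pour voir que l'involution de Zelevinsky met en correspondance les uniques sous-modules irr\'eductibles des deux c\^ot\'es. \'Ecrivons $\pi = \left<\Delta_1, \ldots, \Delta_r\right>$ dans la param\'etrisation de Zelevinsky, de sorte que $\tau(\pi) = \left<\Delta_1, \ldots, \Delta_r\right>^t$ par la d\'efinition \ref{zze}. Comme $\rho$ est cuspidale, elle correspond au segment r\'eduit \`a un point $\{c\}$, et l'on a $\tau(\rho) = \rho$ puisque $\left<\{c\}\right> = \left<\{c\}\right>^t = \rho$; en particulier $\tau(\pi) \times \tau(\rho) = \tau(\pi) \times \rho$.

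Les deux ingr\'edients cl\'es sont alors imm\'ediats. D'une part, le th\'eor\`eme \ref{ueu} (2) identifie l'unique sous-module irr\'eductible de $\rho \times \pi$ comme \'etant $V_0 := \left<Q_c(\{\Delta_1, \ldots, \Delta_r\})\right>$. D'autre part, le th\'eor\`eme \ref{unicite} (2), appliqu\'e \`a $\tau(\pi) \times \rho = \left<\Delta_1, \ldots, \Delta_r\right>^t \times \rho$, identifie son unique sous-module irr\'eductible comme \'etant $\left<Q_c(\{\Delta_1, \ldots, \Delta_r\})\right>^t$, qui n'est autre que $\tau(V_0)$ par la d\'efinition m\^eme de $\tau$.

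La conclusion en d\'ecoule par unicit\'e: $V$ est un sous-module irr\'eductible de $\rho \times \pi$ si, et seulement si, $V = V_0$, si, et seulement si, $\tau(V) = \tau(V_0)$, si, et seulement si, $\tau(V)$ est l'unique sous-module irr\'eductible de $\tau(\pi) \times \tau(\rho)$. L'\'etape qui m\'erite attention n'est pas calculatoire mais conceptuelle: il s'agit d'observer que le m\^eme op\'erateur combinatoire $Q_c$, agissant sur le m\^eme multisegment, d\'ecrit l'unique sous-module irr\'eductible dans les deux param\'etrisations (\`a la Langlands et \`a la Zelevinsky). Ce parall\'elisme, \'etabli en amont par la preuve jumelle des th\'eor\`emes \ref{unicite} et \ref{ueu}, rend le corollaire essentiellement formel.
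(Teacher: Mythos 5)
Votre démarche est exactement celle du papier, qui déduit le corollaire de \ref{ueu}.(2), \ref{unicite}.(2) et \ref{rectas}: vous combinez correctement les deux théorèmes jumeaux, et vos observations (l'unicité du sous-module de chaque côté, $\tau(\rho)=\rho$ pour $\rho$ cuspidale, l'injectivité de $\tau$ sur $\Irr$) suffisent bien à conclure \emph{dans le cadre où ces théorèmes sont établis}.

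Le seul point que vous omettez, et que le papier cite explicitement, est la réduction via la remarque \ref{rectas}. Tout le calcul de la section \ref{3} (l'identification des segments à des intervalles d'entiers, les entiers $t_\pi, w_\pi, u_\pi, l'_\pi$, les opérateurs $Q_c$ et $S_c$, puis les théorèmes \ref{unicite} et \ref{ueu}) est mis en place après avoir fixé une cuspidale $\alpha$ et sous l'hypothèse que $\pi$ appartient à $\mathcal{R}(\alpha)$, c'est-à-dire que tous les segments $\Delta_i$ sont portés par la droite $\left\{\nu_{\rho}^{t}\rho : t \in \mathbb{Z}\right\}$ de $\rho$. Dans le corollaire, $\pi$ est une irréductible quelconque: son multisegment peut contenir des segments hors de cette droite, et l'on ne peut pas appliquer tels quels \ref{unicite}.(2) et \ref{ueu}.(2) comme vous le faites en écrivant directement $\pi=\left<\Delta_1,\dots,\Delta_r\right>$. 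Il faut d'abord utiliser la décomposition $\mathcal{R}=\bigotimes_{\rho'\in X}\mathcal{R}(\rho')$ de \ref{rectas} (les segments hors de la droite de $\rho$ ne sont pas liés à $\left\{c\right\}$, et l'involution $\tau$ ainsi que l'induction respectent cette décomposition en facteurs) pour se ramener au cas où le support de $\pi$ est sur la droite de $\rho$, puis conclure comme vous le faites. C'est une étape de routine, mais elle manque à votre rédaction.
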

\begin{proof}
C'est une cons\'equence de \ref{ueu}.(2), \ref{unicite}.(2) et \ref{rectas}.
\end{proof}

La cons\'equence de ce corollaire est que les r\'esultats du papier \cite{MW} sont valables pour des repr\'esentations complexes de ${\rm GL}_r(D)$ comme c'\'etait conjectur\'e dans \cite[Conjecture 3.6]{Tadic}. En effet, toute la partie I de \cite{MW} \'etait consacr\'e \`a la preuve du corollaire \ref{co} pour $\pi$ et $\rho$ des repr\'esentations \textit{irr\'eductibles} d'une certaine alg\`ebre de Hecke mais pour la preuve du th\'eor\`eme qui suit, ils n'utilisaient que le corollaire pr\'ec\'edent.
\begin{theorem}\label{geometrique}
L'involution $\tau$ v\'erifie la description g\'eom\'etrique de \cite{Z2} et la description combinatoire de \cite{MW}. 
\end{theorem}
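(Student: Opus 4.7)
Le plan est d'invoquer directement les Parties~II et III de \cite{MW}. Dans cet article, Moeglin et Waldspurger consacrent toute leur Partie~I \`a \'etablir, dans le cadre d'une certaine alg\`ebre de Hecke, un \'enonc\'e analogue \`a notre corollaire \ref{co}. Leurs Parties~II et III d\'erivent alors de cette seule propri\'et\'e, par des manipulations purement combinatoires sur les multi-segments, \`a la fois la description g\'eom\'etrique de \cite{Z2} et leur description combinatoire de l'involution $\tau$. Comme la remarque qui pr\'ec\`ede l'\'enonc\'e du th\'eor\`eme le souligne, c'est exactement cette observation qui permet d'importer leurs r\'esultats dans notre cadre.

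Le premier pas consistera \`a v\'erifier que tous les ingr\'edients auxiliaires utilis\'es dans les Parties~II et III de \cite{MW} sont disponibles pour ${\rm GL}_n(D)$: la classification de Tadic des repr\'esentations irr\'eductibles par les multi-segments (section~\ref{sectadic}), la d\'ecomposition en blocs de $Fin(G_n)$ (section~\ref{bloques}), l'exactitude et la r\'eciprocit\'e de Frobenius des foncteurs de Jacquet, et les param\'etrisations \`a la Langlands et \`a la Zelevinsky rappel\'ees dans la section~\ref{sectadic}. Tous ces ingr\'edients ont \'et\'e \'etablis ou rappel\'es plus haut dans l'article, l'existence et les propri\'et\'es de $\tau$ d\'ecoulant de la d\'efinition \ref{zze} et de la remarque \ref{rectas}.

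Le deuxi\`eme pas consistera \`a substituer, dans chaque argument de \cite{MW}, le corollaire \ref{co} \`a leur lemme analogue dans l'alg\`ebre de Hecke, et \`a v\'erifier que leurs preuves passent mot pour mot. La r\'ecurrence fondamentale dans \cite{MW} proc\`ede par \'epluchage successif d'une repr\'esentation cuspidale $\rho$ du support, en utilisant le fait que l'involution commute (au sens du corollaire \ref{co}) avec l'op\'eration d'adjoindre $\rho$; l'identification avec la description g\'eom\'etrique de \cite{Z2} se fait alors en comparant les deux c\^ot\'es sur chaque orbite.

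L'obstacle principal sera une v\'erification de routine: s'assurer qu'aucun pas des Parties~II et III de \cite{MW} ne d\'epend implicitement d'une propri\'et\'e sp\'ecifique \`a leur cadre (par exemple, une propri\'et\'e particuli\`ere des supports cuspidaux dans l'alg\`ebre de Hecke consid\'er\'ee, ou une hypoth\`ese tacite sur les param\`etres $s_{\rho}$) qui pourrait tomber en d\'efaut pour ${\rm GL}_n(D)$. Comme tous les arguments sont port\'es par les multi-segments et ne font intervenir la structure repr\'esentation-th\'eorique que via la commutation avec l'involution et l'exactitude des foncteurs de Jacquet, cette v\'erification devrait \^etre directe, et c'est pr\'ecis\'ement la raison pour laquelle le corollaire \ref{co} suffit \`a conclure.
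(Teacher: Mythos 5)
Votre proposition est correcte et suit essentiellement la m\^eme d\'emarche que l'article: la preuve consiste pr\'ecis\'ement \`a remplacer, dans la d\'emonstration du th\'eor\`eme II.13 de \cite{MW}, la proposition I.7.3 de \cite{MW} (seul point o\`u intervient le cadre de l'alg\`ebre de Hecke) par le corollaire \ref{co}, les arguments des parties suivantes de \cite{MW} \'etant purement combinatoires. Votre \'elaboration sur la v\'erification des ingr\'edients auxiliaires ne fait que d\'etailler ce que l'article r\'esume en une phrase.
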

\begin{proof}
Il suffit de changer dans la preuve du th\'eor\`eme \cite[II.13]{MW} la proposition \cite[I.7.3]{MW}  par le corollaire pr\'ec\'edent.
\end{proof}
\appendix
\section{La correspondance th\^eta}
Ici on montre un lemme qui nous aidera \`a calculer explicitement la correspondance th\^eta dans \cite{Mi1}. On continue avec les notations de la section \ref{3}. Soient $a,b,c$ des entiers.
\begin{lemma}\label{com}
Soit $\left\{\Delta'_1, \dots, \Delta'_{r'}\right\}$ un multisegment et posons $\pi$ la sous-repr\'esen\-tation irr\'eductible de $\left<\Delta'_1, \dots, \Delta'_{r'}\right>^t\times \left<c\right>$ et $\pi'$ la sous-repr\'esen\-tation irr\'eductible de $\left<b, \dots, b-a, \Delta'_1, \dots, \Delta'_{r'}\right>^t\times \left<c\right>$.

Si $c \notin \left\{b, b-a-1\right\}$ et $\pi=\left<\Delta_1, \dots, \Delta_r\right>^t$, alors
$$\pi'=\left<b, \dots, b-a,\Delta_1, \dots, \Delta_r\right>^t.$$
\end{lemma}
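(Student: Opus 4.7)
The plan is to reduce the statement to a combinatorial identity on multisegments via the explicit description of the unique irreducible subrepresentation obtained in Theorem~\ref{unicite}.(2), and then to verify that identity by a case analysis under the stated hypothesis.

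Set $m'=\{\Delta'_1,\dots,\Delta'_{r'}\}$ and $\Delta_0=\{b,\dots,b-a\}$. Applying Theorem~\ref{unicite}.(2) to both $\langle m'\rangle^t\times\langle c\rangle$ and $\langle\{\Delta_0\}\cup m'\rangle^t\times\langle c\rangle$, I identify
$$\pi=\left\langle Q_c(m')\right\rangle^t\quad\text{and}\quad\pi'=\left\langle Q_c\bigl(\{\Delta_0\}\cup m'\bigr)\right\rangle^t,$$
so that the assumption $\pi=\langle\Delta_1,\dots,\Delta_r\rangle^t$ gives $Q_c(m')=\{\Delta_1,\dots,\Delta_r\}$. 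By the injectivity of the Langlands parametrization (Theorem~\ref{Z}.(2)), the conclusion becomes the purely combinatorial equality
$$Q_c\bigl(\{\Delta_0\}\cup m'\bigr)\;=\;\{\Delta_0\}\cup Q_c(m').$$

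To prove this, I would inspect the explicit description of $Q_c$ given after Lemma~\ref{pocos}: the operator depends only on the multisets of segments of its argument beginning at $c$ and at $c+1$, on the pairing between them defined through the precedence relation, and on the distinguished index $l(1)$ when $u_\pi\geq 1$ (or on the convention $u_\pi=0$, which adjoins a fresh singleton $\{c\}$). The hypothesis $c\notin\{b,b-a-1\}$ is designed precisely to prevent $\Delta_0$ from contributing a new segment beginning at $c$ or at $c+1$; therefore the multisets of $c$-segments and $(c+1)$-segments, the pairings, the integers $t_\pi,w_\pi,u_\pi,l'_\pi$, and the distinguished index $l(1)$ coincide for $m'$ and for $\{\Delta_0\}\cup m'$, so that $Q_c$ performs the same elementary move in the two computations. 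This yields the desired identity.

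The hard part is the case analysis in the last step. One must match the two excluded values of $c$ with the two positions of $\Delta_0$ that would otherwise perturb the combinatorics, carefully following the convention used for the notation $\{b,\dots,b-a\}$. One must also confirm that the rangement of the enlarged multisegment places $\Delta_0$ away from the index $l(1)$, and in borderline situations where $\Delta_0$ happens to be linked to some $\Delta'_i$, one would invoke Lemma~\ref{lemme2} to verify that the ensuing precedence relation does not feed back into the pairing that governs $Q_c$.
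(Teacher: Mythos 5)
Your reduction is the right frame: by Theorem~\ref{unicite}.(2) both $\pi$ and $\pi'$ are given by applying $Q_c$, so the lemma is equivalent to a purely combinatorial identity, exactly as in the paper. But the verification you sketch rests on two misreadings. First, the notation $b,\dots,b-a$ inside $\left<\,\cdot\,\right>^t$ does not denote a single segment $\Delta_0=\{b-a,\dots,b\}$: it is the list of the $a+1$ \emph{singleton} segments $\{b\},\{b-1\},\dots,\{b-a\}$ (this is forced by the application in Corollaire~\ref{comb}, where the added factor is the product of characters $\nu^{\frac{m-2n-1}{2}}\times\dots\times\nu^{\frac{-m+1}{2}}$, i.e.\ a run of length-one segments in the Langlands data). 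Second, and more seriously, your central claim --- that the hypothesis $c\notin\{b,b-a-1\}$ prevents the added material from contributing any segment beginning at $c$ or at $c+1$, so that the multisets of $c$- and $(c+1)$-segments, the pairing, and $l(1)$ are literally the same for the two multisegments --- is false. The hypothesis only guarantees that the singletons $\{c\}$ and $\{c+1\}$ are adjoined \emph{both or neither}. In the range $b-a\le c\le b-1$ both are adjoined, the multisets of segments beginning at $c$ and at $c+1$ genuinely change, and the identity $Q_c(\{b,\dots,b-a\}\cup m')=\{b,\dots,b-a\}\cup Q_c(m')$ is no longer a tautology: one must check, as the paper does by the induction of the type used in Lemme~\ref{ope}, that the new $\{c\}$ is matched with the new $\{c+1\}$ without perturbing the matching among the old segments, so that $t,w,u,l'$ shift coherently and the distinguished index governing $Q_c$ is unchanged. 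That case is the whole content of the lemma, and your plan not only omits it but asserts it cannot occur. (Even under your single-segment reading the argument would fail, since the hypothesis does not exclude $c=b-a$, where $\Delta_0$ would begin at $c$ and alter the pairing.) The appeal to Lemme~\ref{lemme2} in your last paragraph does not address this: that lemma computes $\overline{\Jac}$ of a two-segment representation and plays no role here. To repair the proof, keep your reduction, split according to whether $c$ and $c+1$ both lie in $\{b-a,\dots,b\}$ or neither does (this is exactly what the hypothesis gives), dispose of the second case as you do, and in the first case carry out the \ref{ope}-style induction showing the added pair $\{c\},\{c+1\}$ pairs off internally.
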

\begin{proof}
La condition $c \notin \left\{b, b-a-1\right\}$ \'equivaut au fait que $c$ et $c+1$ appartiennent tous les deux \`a $\left\{b, b-1, \dots, b-a, b-a-1\right\}$ ou aucun des deux. 
Ainsi, si l'on note
$\left\{ \delta_1, \dots, \delta_n\right\}$ (resp. $\left\{ \delta'_1, \dots, \delta'_{n'} \right\}$) le sous-ensemble de $\left\{ \Delta^{\prime}_1, \dots, \Delta^{\prime}_{r'}\right\}$ (resp. $\left\{b, \dots, b-a, \Delta^{\prime}_1, \dots, \Delta^{\prime}_{r'}\right\}$ des segments commen\c{c}ant par $c$ ou $c+1$ on a que:
\begin{enumerate}
\item Si $c \in \left\{b, b-1, \dots, b-a, b-a-1\right\} $, alors 
$$\left\{c, c+1, \delta_1, \dots, \delta_n\right\}=\left\{ \delta'_1, \dots, \delta'_{n'} \right\}.$$
\item Si $c \notin \left\{b, b-1, \dots, b-a, b-a-1\right\} $, alors 
$$\left\{\delta_1, \dots, \delta_n\right\}=\left\{ \delta'_1, \dots, \delta'_{n'} \right\}.$$
\end{enumerate}
Dans le deuxi\`eme cas le lemme est une cons\'equence imm\'ediate de \ref{unicite}. Dans le premier cas, il est \'evident, par r\'ecurrence comme dans \ref{ope}, que
$\delta^{\prime}_{l(v)}=\delta_{l(v)}$ pour $v=1, \dots, \omega_{\left<\delta'_1, \dots, \delta'_{n'}\right>^t}$ et $\omega_{\left<\delta_1, \dots, \delta_n\right>^t}= \omega_{\left<\delta'_1, \dots, \delta'_{n'}\right>^t}$.
On ach\`eve la d\'emonstration avec le th\'eor\`eme \ref{unicite}.
\end{proof}
On r\'ecrit le lemme dans les notations qu'on utilisera dans \cite{Mi1}. On rappelle qu'on a d\'efini, pour $g \in D^\times$, $ \nu(g)=\left| \ry_{D}(g)\right|_F$.

\begin{defi}
Soit $\pi\in \Irr(G_n)$, quotient de Langlands de $\tau_1 \times \dots \times \tau_r$, o\`u $\tau_1,\dots ,\tau_r$ sont des repr\'esentations essentiellement de carr\'e int\'egrable. Notons alors $\theta^\ast _m(\pi)$ le quotient de Langlands de $$\nu ^{\frac{m-2n-1}{2}} \times \dots \times \nu ^{\frac{-m+1}{2}} \times\nu ^{\frac{m-n}{2}} \widetilde{\tau_1} \times \dots \times \nu ^{\frac{m-n}{2}} \widetilde{\tau_r}.$$
\end{defi}
\begin{corollary}\label{comb}
Soient $\rho$ une repr\'esentation cuspidale de $G_p$, $\rho \neq \begin{cases} \nu ^{\frac{n+1}{2}}\\ \nu ^{\frac{2m-n+1}{2}}\end{cases}$, $\pi_1\in \Irr(G_{n-p})$, et $\pi$ l'unique sous-repr\'esentation irr\'eductible de $\rho \times \pi_1$. Notons $\pi'$ l'unique sous-repr\'esentation irr\'eductible de $\nu^{\frac{-p}{2}}\theta_{m-p}^\ast(\nu^{\frac{-p}{2}}\pi_1) \times \nu^{\frac{m-n}{2}}\widetilde{\rho}$. Alors
$$\pi'= \theta_{m}^\ast(\pi).$$
\end{corollary}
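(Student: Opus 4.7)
Le plan consiste à calculer explicitement les paramètres de Langlands de $\theta^\ast_m(\pi)$ et de $\pi'$, et à les identifier grâce au Lemme~\ref{com}. En écrivant $\pi_1 = \langle\Delta_1,\ldots,\Delta_r\rangle^t$ en forme de Langlands et en déroulant la définition de $\theta^\ast$, un calcul direct montre que $\nu^{-p/2}\theta^\ast_{m-p}(\nu^{-p/2}\pi_1)$ est le quotient de Langlands du module standard dont les facteurs essentiellement de carré intégrable sont les caractères $\nu^{(m-2n-1)/2},\ldots,\nu^{(-m+1)/2}$ et les représentations $\nu^{(m-n)/2}\widetilde{\langle\Delta_i\rangle^t}$; ses paramètres de Langlands forment donc le multisegment $\mathcal{E}\cup\{\nu^{(m-n)/2}\widetilde{\Delta_i}\}_{i=1}^r$, où $\mathcal{E}=\{E_0,\ldots,E_{m-n-1}\}$ est le bloc des $m-n$ segments de longueur~$1$ donnés par $E_j=\{\nu^{(m-2n-1)/2-j}\}$. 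Par le Théorème~\ref{unicite}(2), $\pi'$ admet alors pour paramètres de Langlands $Q_{c^\ast}(\mathcal{E}\cup\{\nu^{(m-n)/2}\widetilde{\Delta_i}\})$, où $c^\ast$ désigne le paramètre de $\nu^{(m-n)/2}\widetilde{\rho}$.

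L'hypothèse $\rho\neq\nu^{(n+1)/2},\nu^{(2m-n+1)/2}$ équivaut à $c^\ast\notin\{b,b-a-1\}$ avec $b=(m-2n-1)/2$ et $a=m-n-1$ (les extrémités du bloc $\mathcal{E}$), c'est-à-dire exactement à l'hypothèse du Lemme~\ref{com} lorsque $\mathcal{E}$ joue le rôle du segment ajouté $\{b-a,\ldots,b\}$. Bien que $\mathcal{E}$ soit composé de $m-n$ segments séparés de longueur~$1$ plutôt que d'un seul segment, l'argument combinatoire de pairage de la preuve du Lemme~\ref{com} (qui s'appuie sur celui du Lemme~\ref{ope}) s'étend: chaque fois que $c^\ast+1$ rencontre un $E_j$ intérieur au bloc, les segments $E_{j-1}$ et $E_j$ se pairent dans l'analyse $i(v),j(v)$ de $Q_{c^\ast}$, de sorte que l'effet net sur $\mathcal{E}$ se réduit à l'ajout d'un doublon $\{c^\ast\}$; sous l'hypothèse, les cas dégénérés aux deux extrémités sont exclus. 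On obtient ainsi la relation de commutation $Q_{c^\ast}(\mathcal{E}\cup\{\nu^{(m-n)/2}\widetilde{\Delta_i}\})=\mathcal{E}\cup Q_{c^\ast}(\{\nu^{(m-n)/2}\widetilde{\Delta_i}\})$.

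Pour conclure, il reste à identifier $Q_{c^\ast}(\{\nu^{(m-n)/2}\widetilde{\Delta_i}\})$ avec $T(\{\sigma_j\})$, où $T(\Delta)=\nu^{(m-n)/2}\widetilde{\Delta}$ et $\{\sigma_j\}$ désigne la famille des paramètres de Langlands de $\pi$. Puisque $\pi$ est l'unique sous-représentation de $\rho\times\pi_1$, le Théorème~\ref{ueu}(2) combiné avec l'involution de Zelevinsky (Corollaire~\ref{co}) décrit $\{\sigma_j\}$ comme l'image de $\{\Delta_i\}$ par l'opération conjuguée $T^{-1}\circ Q_{c^\ast}\circ T$, ce qui fournit précisément l'égalité $\pi'=\theta^\ast_m(\pi)$. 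La principale difficulté de ce plan est la vérification rigoureuse de cette version « bloc » du Lemme~\ref{com}: l'analyse inductive de pairage du Lemme~\ref{ope} doit être conduite avec soin pour le multisegment $\mathcal{E}$, les deux valeurs de $\rho$ exclues correspondant précisément aux dégénérescences aux extrémités du bloc.
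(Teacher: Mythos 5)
Votre sch\'ema g\'en\'eral (calcul des param\`etres de Langlands des deux membres, th\'eor\`eme \ref{unicite}.(2), commutation de $Q_{c^\ast}$ avec l'ajout du bloc $\mathcal{E}$, les deux valeurs exclues de $\rho$ correspondant aux d\'eg\'en\'erescences aux extr\'emit\'es) est exactement celui du papier. Deux remarques. D'abord, la version \emph{bloc} du lemme \ref{com}, que vous pr\'esentez comme la difficult\'e principale, n'est pas \`a \'etablir : tel qu'il est \'enonc\'e, d\'emontr\'e et utilis\'e, ce lemme porte d\'ej\`a sur la famille des segments de longueur $1$, \`a savoir $\left\{b\right\},\left\{b-1\right\},\dots,\left\{b-a\right\}$, et non sur un segment unique (dans sa preuve, l'ensemble des segments commen\c{c}ant par $c$ ou $c+1$ s'accro\^{\i}t exactement de $\left\{c\right\}$ et $\left\{c+1\right\}$). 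Joint \`a \ref{unicite}.(2), il donne pr\'ecis\'ement votre relation $Q_{c^\ast}(\mathcal{E}\cup X)=\mathcal{E}\cup Q_{c^\ast}(X)$ sous votre hypoth\`ese, et votre esquisse d'appariement refait l'induction de \ref{ope}. Ensuite, vous omettez la premi\`ere \'etape du papier : la r\'eduction, via \ref{rectas}, au cas o\`u $\rho$ et le support de $\pi_1$ sont sur la droite $\mathcal{R}\left(\nu^{\frac{n+1}{2}}\right)$ des caract\`eres (sinon le r\'esultat est trivial). Elle est n\'ecessaire, car la combinatoire de la section \ref{3} ($Q_c$, identification des segments \`a des intervalles) n'est d\'efinie qu'\`a l'int\'erieur d'une seule droite, et ni $\rho$ ni les caract\`eres $\nu^{\frac{m-2n-1}{2}-j}$ n'ont de raison a priori d'\^etre sur les droites des $\Delta_i$.

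La vraie lacune est la justification de la derni\`ere \'etape. Vous d\'eduisez l'identit\'e $T(\{\sigma_j\})=Q_{c^\ast}(T(\{\Delta_i\}))$ de \ref{ueu}.(2) et \ref{co} ; or ces \'enonc\'es font intervenir l'involution de Zelevinsky $\tau$, et conjuguer $Q_{c^\ast}$ par $\tau$ n'est pas la conjuguer par $T$ : sur les param\`etres de Langlands, $\tau$ agit par l'algorithme de \cite{MW} (th\'eor\`eme \ref{geometrique}) et non segment par segment, et \ref{ueu}.(2) exigerait de plus les param\`etres de Zelevinsky de $\pi_1$, qui ne sont pas les $\Delta_i$. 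L'argument correct passe par la contragr\'ediente : $\pi$ est sous-module de $\rho\times\pi_1$ si et seulement si $\widetilde{\pi}$ est quotient de $\widetilde{\rho}\times\widetilde{\pi_1}=\widetilde{\rho}\times\langle\widetilde{\Delta_1},\dots,\widetilde{\Delta_r}\rangle^t$, et \ref{unicite}.(4) (ou, de fa\c{c}on \'equivalente, la proposition \ref{cambio} suivie de \ref{unicite}.(2), ce qui est le chemin du papier) donne $\widetilde{\pi}=\langle Q_{c}(\{\widetilde{\Delta_i}\})\rangle^t$, o\`u $\nu^{c}=\widetilde{\rho}$ ; en tordant par $\nu^{\frac{m-n}{2}}$ on obtient la formule voulue. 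Avec cette correction et la r\'eduction par \ref{rectas}, votre preuve co\"{\i}ncide avec celle du papier, qui conclut par le lemme \ref{com} appliqu\'e avec $b=\frac{-n-1}{2}$.
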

\begin{proof}
En effet, par \ref{rectas}, on peut supposer que pour tout $i\leq r$, $\supp(\tau_i) \subset \mathcal{R}\left(\nu ^{\frac{n+1}{2}}\right)$. De m\^eme $ \rho \in \mathcal{R}\left(\nu ^{\frac{n+1}{2}}\right)$, sinon le r\'esultat est trivial. Soit $c \in \mathbb{R}$ tel que $\rho = \nu ^{-c}$ et soient $\Delta'_1, \dots, \Delta'_r$ des segments tels que $\pi_1= \left<\Delta'_1, \dots, \Delta'_{r'}\right>^t$. Alors $\pi$ est, par \ref{cambio}, l'unique sous-module irr\'eductible de $\left<\widetilde{\Delta'_1}, \dots, \widetilde{\Delta'_{r'}}\right>^t\times \left<\nu^{c}\right>$. Le corollaire d\'ecoule maintenant du lemme \ref{com}, avec $b=\frac{-n-1}{2}$ et $a=m-n$.

\end{proof}
Est-il possible de montrer ce th\'eor\`eme sans utiliser tous les calculs de la section pr\'ec\'edente?

\end{document}